\newtheorem{theorem}{Theorem}[section]
\newtheorem{corollary}[theorem]{Corollary}
\newtheorem{lemma}[theorem]{Lemma}
\newtheorem{proposition}[theorem]{Proposition}
\theoremstyle{definition}
\newtheorem{definition}[theorem]{Definition}
\theoremstyle{remark}
\newtheorem{remark}[theorem]{Remark}
\numberwithin{equation}{section}
\newcommand{\ds}{\displaystyle}
\newcommand{\R}{\mathbb R}
\newcommand{\N}{\mathbb N}
\newcommand{\Z}{\mathbb Z}
\newcommand{\norm}[1]{\left\Vert#1\right\Vert}
\newcommand{\abs}[1]{\left\vert#1\right\vert}
\newcommand{\set}[2]{\left\{ #1 \,;\, #2 \right\}}
\newcommand{\eps}{\varepsilon}
\begin{document}
\title{Nemytskii operators between Stepanov almost periodic or almost automorphic function spaces}
\author{Philippe CIEUTAT{}$^1$}
%
%\maketitle
%
\setcounter{footnote}{-1}
\renewcommand{\thefootnote}{\alph{footnote}}
\footnote{ {}$^1$ Laboratoire de Math\'ematiques de Versailles, UVSQ, CNRS, Universit\'e Paris-Saclay, 78035 Versailles, France.  E-mail address: philippe.cieutat@uvsq.fr}

%%%%%%%%%%%%%%%%%%%%%%%%%%%
\begin{abstract}
We study the superposition operators (also called Nemytskii operators) between spaces of almost periodic (respectively almost automorphic) functions in the sense of Stepanov. We state new results on the superposition, notably we give a necessary and sufficient condition for that these operators are well-defined and continuous.

\end{abstract}
%%%%%%%%%%%%%%%%%%%%%%%%%%%

%
\maketitle

\noindent
{\bf 2010 Mathematic Subject Classification:} Superposition operators, Stepanov almost periodic functions, Stepanov almost automorphic functions, Bochner transform.
\vskip2mm
\noindent
{\bf Keywords:} 42A75, 43A60.

\vskip10mm
%

%%%%%%%%%%%%%%%%%%%%%%%%%%%
\section{Introduction}
\label{i}
%%%%%%%%%%%%%%%%%%%%%%%%%%%
%Section 1 

In this  work, we study some properties of superposition operators, also called Nemytskii operators, on the space of  Stepanov almost periodic or Stepanov almost automorphic functions. 
Denote by $\mathcal{F}(\R,X)$ (resp. $\mathcal{F}(\R,Y)$) a space of functions from $\R$  into a Banach $X$ (resp. $Y$).
For a given function $f:\R\times X\to Y$, the  Nemytskii operator associated to $f$ is the map $\mathcal{N}_f:\mathcal{F}(\R,X)\to \mathcal{F}(\R,Y)$  defined by the formula
$\mathcal{N}_f(u)(t) = f(t,u(t))$ for $u\in\mathcal{F}(\R,X)$ and $t\in\R$.
The Nemytskii operators play an important role in the theory of differential and integral equations. 
First studies of this kind of operators are presumably due to Nemytskii (see the preface of \cite{V}); 
that is why such operators are sometime called Nemytskii operators.
\vskip2mm
When $\mathcal{F}(\R,X)$ (resp. $\mathcal{F}(\R,Y)$) designs the space of almost periodic or almost automorphic functions in the sense of Stepanov with values in $X$ (resp. $Y$),
our aim is to answer these questions: what assumptions should check  $f$ for that
\vskip 2 mm
Q1- the Nemytskii operator $\mathcal{N}_f$ maps $\mathcal{F}(\R,X)$ into $\mathcal{F}(\R,Y)$, that is to mean
$[t\mapsto f(t,u(t))]\in\mathcal{F}(\R,Y)$, for all $u\in\mathcal{F}(\R,X)$ (composition result), 
\vskip 2 mm
Q2- the Nemytskii operator $\mathcal{N}_f$ is continuous?
\vskip 2 mm
Many authors have partially answered to the question Q1 in
\cite{Di-Hu-Li-Xi, Di-Li-Xi, Lo-Di, NG-Di}.
For that they use a Lipschitzian condition on $f$ and a compactness condition on the function $u$.  These results are of the type:
when $f$ satisfies a Lipschitzian condition, if  $u\in\mathcal{F}(\R,X)$ and the range of $u$ is relatively compact, then $[t\mapsto f(t,u(t))]\in\mathcal{F}(\R,Y)$.
Then in a recent article \cite[Theorem 2.11]{Be-Ch-Me-Ra-Sm}, Bedouhene et al. have deleted this last compactness condition.  
Without the Lipschitz condition,  Andres et al. has answered to the question Q1 in the particular case where $f$ does not depend of $t$ in \cite[Lemma 3.2]{An-Pe} and in the general case in \cite[Proposition 3.4]{An-Pe}. We give an improvement of these two results (Corollary \ref{cor10} and Theorem \ref{th11}).
\vskip 2 mm
In the almost periodicity case  in the sense of Bohr, that is $\mathcal{F}(\R,X) = AP(\R,X)$ and $\mathcal{F}(\R,Y) = AP(\R,Y)$ are spaces of almost periodic functions in the sense of Bohr, the condition $f(\cdot,x)$  is almost periodic in the sense of Bohr, for all $x\in X$, is not  sufficient   to obtain the  assertion Q1
(cf. \cite[Chapter 2, p. 16]{Fi}). 
Yoshizawa has given
a definition of almost periodicity on the function $f$, the so-called  almost periodicity in $t$ uniformly for $x\in X$. With this definition, when $X$ and $Y$ are of finite dimension, Yoshizawa has answered to the question Q1 in \cite[Definition 2.1, p. 5, Theorem 2.7, p. 16]{Yo}. 
Then this last result is generalized for general Banach spaces by Blot et al. in \cite[Theorem 3.5]{Bl-Ci-Ng-Pe}, and it is  also established that the Nemytskii operator is continuous. In \cite[Theorem 3.5 \& 3.12]{Bl-Ci-Ng-Pe}, it is stated a necessary and sufficient conditions for that  the Nemytskii operator $\mathcal{N}_f$ maps $AP(\R,X)$ into $AP(\R,Y)$ and it is continuous.
Among these necessary and sufficient conditions, there is firstly the function $f$ is almost periodic in $t$ uniformly for $x\in X$,  secondly the restriction of the Nemytskii to $X$:
$\mathcal{N}_f:X\to AP(\R,Y)$ with $\mathcal{N}_f(x)=f(\cdot,x)$, 
 is well defined and it is continuous (Theorem \ref{th9}). 
\vskip 2 mm
The goal of this work is to give a necessary and sufficient condition for that the Nemytskii operator $\mathcal{N}_f$ maps $\mathcal{F}(\R,X)$ into $\mathcal{F}(\R,Y)$ and it is continuous where $\mathcal{F}(\R,X)$ and $\mathcal{F}(\R,Y)$ are spaces of almost periodic functions in the sense of Stepanov (Theorem \ref{th3}). 
This necessary and sufficient condition is:
$f(\cdot,x)$ is almost periodic in the sense of Stepanov for all $x\in X$ and
the restriction of the Nemytskii operator to the space of $1$-periodic functions in $L_{loc}^p(\R,X)$, with values in the space of bounded functions in the sense of Stepanov  is well-defined and continuous.
The almost automorphic case
is also treated (Theorem \ref{th2}).
\vskip 2 mm
Our work is organized as follows: in Section \ref{iii} we give some
notations and  definitions about almost periodic functions and almost automorphic functions, then  we recall known results on the Nemytskii operators which will be used. In Section \ref{ii} we built  a left inverse of the Bochner transform  which permits to state that the range of Bochner transform is closed and admits a topological complement. This left inverse  will be used to state the main result of the following section.  In Section \ref{iv} we give a necessary and sufficient condition to obtain the continuity of Nemytskii operators between almost periodic and almost automorphic spaces in the sense of Stepanov, which  permits to generalize some known results of
\cite{Be-Ch-Me-Ra-Sm, Di-Li-Xi, Lo-Di}.
In Section \ref{v} we extend some results of Danilov in \cite{Da, Da1} from Stepanov almost periodic to Stepanov almost automorphic  functions. 
These results will be  used in the following section. 
In Section \ref{vi} we state two equivalent results (Theorem \ref{th5} and \ref{th11}) which improve and generalize all the known results on the composition of Stepanov almost periodic or almost automorphic  functions.
We  give sufficient conditions to obtain the continuity of Nemytskii operators between Stepanov spaces. The assumptions are directly on the function $f$, unlike of Section \ref{iv} where assumptions are on  Nemytskii operators built on $f$.
By giving an example, in Section \ref{vii}, we explain why Theorem \ref{th5} and \ref{th11} provide an improvement and a generalization of  results in \cite{An-Pe, Be-Ch-Me-Ra-Sm, Di-Hu-Li-Xi, Di-Li-Xi, Lo-Di, NG-Di}.

%%%%%%%%%%%%%%%%%%%%%%%%%%%
\section{Notation and definitions}
\label{iii}
%%%%%%%%%%%%%%%%%%%%%%%%%%%
%Section 2

%%%%%%%%%%%%%%%%%%%%%%%%%%%
\subsection{Notation}

%%%%%%%%%%%%%%%%%%%%%%%%%%%

In this section we give the notations and definitions that will be used and we recall some known results on the Nemytskii operators.

\vskip 2 mm

$\R$, $\Z$ and $\N$ stand for the real numbers, the integers and  the natural integers  respectively. 
\vskip 2 mm
When $t\in\R$, we denote by {\it $[t]$ the integer part and $\{t\}$  the fractional part of $t$}, then $t=[t]+\{t\}$ with $[t]\in\Z$ and $0\leq \{t\}<1$.
\vskip 2 mm
When $A$ is a Lebesgue measurable set of $\R$, we denote by $\rm{meas}\left( A\right)$ the Lebesgue measure of $A$.
\vskip 2 mm
Let $X$ be  a Banach space.
\vskip 2 mm
When $T$ is a metric space, $C(T,X)$ denotes the space of all continuous mappings from $T$ into $X$. If $T$ is compact, then $C(T,X)$ endowed with the supremum norm $\ds\norm{u}_{\infty} = \sup_{t\in T}\norm{u(t)}$ is a Banach space.
\vskip 2 mm
Let $BC(\R,X)$ be  the space of all bounded and continuous maps from $\R$ into $X$. Endowed with the supremum norm $\ds \norm{u}_{\infty} = \sup_{t\in\R}\norm{u(t)}$, $BC(\R,X)$ is a Banach space. 
\vskip 2 mm
Let $1\leq p<+\infty$. We denote by $L^p(a,b;X)$ the space of all functions from $(a,b)$ into $X$ $p$-integrable in the sense of Bochner 
 with respect to the Lebesgue measure on  the bounded interval $(a,b)$,
with the convention that any two functions equal almost everywhere (a.e.) specify the same element of $L^p(a,b;X)$.
Endowed with the usual norm $\ds \norm{\omega}_{L^p(a,b;X)} = \left(\int_a^b\norm{\omega(\theta}^p\,d\theta\right)^{\frac{1}{p}}$, $L^p(a,b;X)$ is a Banach space. 
We denote by $\norm{\cdot}_{L^p}$ the usual norm of $L^p(0,1;X)$.
For the Bochner integral we reefer to \cite{AB, Di-Uh}.
$L_{loc}^p(\R,X)$ stands for 
 the space of all functions $u:\R\to X$ such that the restriction of $u$ to every bounded interval $(a,b)$ is in $L^p(a,b;X)$.
 \vskip 2 mm
We define $L^{\infty}(\R,X)$ to be the space of $X$-valued essentially bounded functions.
\vskip 2 mm
If $\mathcal{F}(E,F)$ designs a set of maps from $E$ into $F$, as usual we denote by $\mathcal{F}(E)$ the set $\mathcal{F}(E,F)$ when $F=\R$, for example $L^p(0,1) = L^p(0,1;\R)$.

%%%%%%%%%%%%%%%%%%%%%%%%%%%
\subsection{Almost periodic and almost automorphic functions}
%%%%%%%%%%%%%%%%%%%%%%%%%%%

A set $D\subset\R$ is said to be {\it relatively dense} in $\R$ if: 
$\ds\exists \ell>0$, $\forall\alpha>0$, such that $D\cap [\alpha,\alpha+\ell]\not=\emptyset$.
\vskip 1 mm
A continuous function $u:\R\to X$ is said to be {\it almost periodic (in the sense of Bohr)} if  for each $\eps > 0$, the set of $\eps$-almost periods of $u$:
$$\mathcal{P}(u,\eps)=\set{\tau\in\R}{\sup_{t \in \mathbb{R}} \norm{u (t + \tau) - u (t)}_X\leq  \eps}$$
 is relatively dense in $\R$.  We denote the space of all such functions by $AP(\R,X)$. It is a Banach subspace of $BC(\R,X)$. 
 For some preliminary results on almost periodic functions, we refer to the book of Corduneanu \cite{Co}.
\vskip 2 mm
A continuous function $u:\R\to X$ is said to be {\it almost automorphic}  if for all sequence of real numbers $(t_{k}^{\prime})_{k\in\N}$ admits  a subsequence  denoted by $(t_{k})_{k\in\N}$ such that
\[
\forall t\in\R,\quad \underset{k\rightarrow\infty}{\lim}u(t+t_{k})  = v(t)\quad\text{and}\quad
\underset{k\rightarrow\infty}{\lim}v(t-t_{k})=u(t) .
\]
Then we have $v\in L^{\infty}(\R,X)$.
We denote the space of all such functions by $AA(\R,X)$. It is a Banach subspace of $BC(\R,X)$. 
We have the following inclusions which are strict
 $$AP(\R,X)\subset AA(\R,X) \subset BC(\R,X) .$$
For some preliminary results on almost automorphic functions, we refer to the book of N'Gu\'er\'ekata \cite{NG}. 
\vskip 2 mm
Let $1\leq p<+\infty$. For $u\in L_{loc}^p(\R,X)$, 
we denote by $u^b$ the {\it Bochner transform} of $u$ defined by
$u^b(t)(\theta)= u(t+\theta)$, for $t\in\R$ and $\theta\in(0,1)$. $u^b(t)$ is regarded as a function with values in the space $L^p(0,1;X)$.
$BS^p(\R,X)$ denotes the space  of {\it bounded functions in the sense of Stepanov of exponent $p$} which is defined by
$$BS^p(\R,X)=\set{u\in L_{loc}^p(\R,X)}{u^b\in L^{\infty}(\R,L^p(0,1;X))} .$$
Note that for every $u\in L_{loc}^p(\R,X)$, 
the function $u^b$ is continuous  (by construction), 
then the space $BS^p(\R,X)$ may be also written 
$$BS^p(\R,X)=\set{u\in L_{loc}^p(\R,X)}{u^b\in BC(\R,L^p(0,1;X))} .$$
The space $BS^p(\R,X)$ endowed by the norm 
\begin{equation}
\label{eq9}
\norm{u}_{S^p}=\sup_{t\in\R}\left(\int_0^1\norm{u(t+\theta}^p\,d\theta\right)^{\frac{1}{p}} = \sup_{t\in\R}\norm{u^b(t)}_{L^p}\quad\text{ for }u\in BS^p(\R,X) 
\end{equation}
is a Banach space. 
\vskip 2 mm
$S_{ap}^p(\R,X)$ denotes the space  of {\it almost periodic functions in the sense of Stepanov of exponent $p$} which is defined by
$$S_{ap}^p(\R,X)=\set{u\in L_{loc}^p(\R,X)}{u^b\in AP(\R,L^p(0,1;X))} .$$
$S_{ap}^p(\R,X)$ is a Banach subspace of $BS^p(\R,X)$.
We have the following strict inclusion $AP(\R,X) \subset S_{ap}^p(\R,X)$.
For some preliminary results on bounded or almost periodic functions in the sense of Stepanov, 
we refer to the book of Amerio and Prouse \cite{Am-Pr} and that of Pankov \cite{Pa}.
We also quote the paper of Andres et al. \cite{An-Be} which discusses the relationships between various definition of almost periodic functions.
\vskip 2 mm
$S_{aa}^p(\R,X)$ denotes the space  of {\it almost automorphic functions in the sense of Stepanov of exponent $p$} which is defined by
$$S_{aa}^p(\R,X)=\set{u\in L_{loc}^p(\R,X)}{u^b\in AA(\R,L^p(0,1;X))} .$$
$S_{aa}^p(\R,X)$ is a Banach subspace of $BS^p(\R,X)$.
We have the following strict  inclusions $AA(\R,X) \subset S_{aa}^p(\R,X)$ and 
$S_{ap}^p(\R,X)\subset S_{aa}^p(\R,X)\subset BS^p(\R,X)$.
For some preliminary results on almost automorphic functions in the sense of Stepanov, we refer to the paper of Casarino \cite{Ca} or of N'Gu\'er\'ekata-Pankov \cite{NG-Pa}.

%%%%%%%%%%%%%%%%%%%%%%%%%%%
\subsection{Almost periodic and almost automorphic sequences}

%%%%%%%%%%%%%%%%%%%%%%%%%%%

Denote by $X^\Z$ the set of all two-sided sequences $u=(u_n)_{n\in\Z}$ with values in the Banach space $X$. 
$\ell^{\infty}(\Z,X)$ denotes the set of all sequences $u=(u_n)_{n\in\Z}$  of $X^\Z$ which are  bounded.
Endowed with the supremum norm $\ds\norm{u}_{\infty} = \sup_{n\in\Z}\norm{u_n}$, $\ell^{\infty}(\Z,X)$ is a Banach space. 
\vskip 2 mm
A set $D\subset\Z$ is said to be {\it relatively dense} in $\Z$ if: 
$\exists N\in\N\setminus\{0\}$, 
$\forall m\in\Z$, such that 
$D\cap \{m, \cdots,m+N\}\not=\emptyset$.
\vskip 1 mm
A sequence $u=(u_n)_{n\in\Z}\in X^\Z$ is said to be {\it almost periodic} if  for each $\eps > 0$, the set of $\eps$-almost periods of $u$:
$$\mathcal{P}(u,\eps)=\set{p\in\Z}{\sup_{n \in\Z} \norm{u_{n+p}  - u_n}_X\leq  \eps}$$
 is relatively dense in $\Z$.  We denote the space of all such sequences by $AP(\Z,X)$. It is a Banach subspace of $\ell^{\infty}(\Z,X)$.
 For some preliminary results on almost periodic sequences, we refer to the book of Corduneanu \cite{Co}.
\vskip 2 mm
A sequence $u=(u_n)_{n\in\Z}\in X^{\Z}$  is said to be {\it almost automorphic}  if all sequence of integer numbers $(p_{k}^{\prime})_{k\in\N}$ admits  a subsequence  denoted by $(p_{k})_{k\in\N}$ such that 
\[
\forall n\in\Z,\quad \underset{k\rightarrow\infty}{\lim}u_{n+p_{k}} = v_n \quad\text{and}\quad
\underset{k\rightarrow\infty}{\lim}v_{n-p_{k}}=u_n .
\]
Then we have $v\in \ell^{\infty}(\Z,X)$. We denote the space of all such sequences by $AA(\Z,X)$. It is a Banach subspace of $\ell^{\infty}(\Z,X)$. 
 We have the following strict inclusions:
 $$AP(\Z,X)\subset AA(\Z,X) \subset \ell^{\infty}(\Z,X) .$$
For some preliminary results on almost automorphic sequences, we refer to the book of 
Diagana \cite{Di1}.

%%%%%%%%%%%%%%%%%%%%%%%%%%%
\subsection{Nemytskii operators}

%%%%%%%%%%%%%%%%%%%%%%%%%%%

Here we recall some known results on Nemytskii operators which will be used in the sequel. Let be $X$ and $Y$ two Banach spaces, $p$ and $q$ be two real numbers in $[1,+\infty)$.

\vskip 2 mm
We become to recall a result on Nemytskii operators between Lebesgue spaces in the context of separable Banach spaces. We say that a function $f:(0,T)\times X\to Y$  (with $T>0$) is a {\it Carath\'eodory function} if:
\vskip 2 mm
a) for all $x\in X$, the map $f(\cdot,x)$ is  measurable from $(0,T)$ into $Y$;
\vskip 2 mm
b) for 
a.e. $t\in (0,T)$,
 the map $f(t,\cdot)$ is continuous from $X$ into $Y$.
\vskip 2 mm
We consider the Nemytskii operator on $f$ defined by 
\begin{equation}
\label{eq53}
N_f:L^p(0,T;X)\to L^q(0,T;Y) \quad\text{with}\quad
N_f(\omega)(t)=f(t,\omega(t))\quad\text{ for }t\in(0,T) .
\end{equation}
In the context of separable Banach spaces, the 
following result is from Lucchetti and Patrone.

%*******************************************
\begin{theorem} \cite[Theorem 3.1]{Lu-Pa}
\label{th6}
%theorem 2.1
Let be $X$ and $Y$ two separable Banach spaces and $f:(0,T)\times X\to Y$  be is a Carath\'eodory function.
%\vskip 2 mm
Then the Nemytskii operator $N_f$ defined by \eqref{eq53} maps $L^p(0,T;X)$ into $L^q(0,T;Y)$ if and only if 
there exist $a>0$ and $b\in L^q(0,T)$ such that for all $x\in X$ and a.e.  $t\in (0,T)$
\begin{equation*}
\norm{f(t,x)}\leq a\norm{x}^{\frac{p}{q}}+b(t) . 
\end{equation*}
In this case the Nemytskii operator $N_f$ is continuous.
\end{theorem}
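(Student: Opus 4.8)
The ``if'' direction together with the continuity assertion is the routine part, so the plan is to dispatch it first and then concentrate on the ``only if'' direction, which carries the real content. Before anything one records that $N_f(\omega)$ makes sense, i.e.\ that $t\mapsto f(t,\omega(t))$ is measurable for every measurable $\omega:(0,T)\to X$: since $X$ is separable, $\omega$ is an a.e.\ limit of countably-valued measurable functions $\omega_n=\sum_k x_{n,k}\chi_{E_{n,k}}$, each $t\mapsto f(t,\omega_n(t))$ is measurable by property (a), and by property (b) these converge a.e.\ to $t\mapsto f(t,\omega(t))$, which is therefore measurable.

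For sufficiency, assuming $\norm{f(t,x)}\le a\norm{x}^{p/q}+b(t)$, I would use the inequality $(s+s')^q\le 2^{q-1}(s^q+s'^q)$ to obtain, for $\omega\in L^p(0,T;X)$,
\[
\int_0^T\norm{f(t,\omega(t))}^q\,dt\le 2^{q-1}\Bigl(a^q\norm{\omega}_{L^p(0,T;X)}^p+\norm{b}_{L^q(0,T)}^q\Bigr)<+\infty ,
\]
so that $N_f$ maps $L^p(0,T;X)$ into $L^q(0,T;Y)$. For continuity I would argue along subsequences: given $\omega_n\to\omega$ in $L^p(0,T;X)$ it suffices to extract a subsequence with $N_f(\omega_n)\to N_f(\omega)$ in $L^q$; passing to a subsequence along which $\omega_n\to\omega$ a.e.\ and $\norm{\omega_n(\cdot)}\le g(\cdot)$ for some $g\in L^p(0,T)$, property (b) gives $f(t,\omega_n(t))\to f(t,\omega(t))$ a.e.\ while the growth bound dominates $\norm{f(t,\omega_n(t))-f(t,\omega(t))}^q$ by a fixed $L^1(0,T)$ function, so the dominated convergence theorem concludes.

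For the necessity direction I would first normalize: since $f(\cdot,0)=N_f(0)\in L^q(0,T;Y)$, replacing $f(t,x)$ by $f(t,x)-f(t,0)$ (which only enlarges $b$) allows me to assume $f(t,0)=0$. Then I would establish an a priori bound by contradiction: if $\sup\set{\norm{N_f(\omega)}_{L^q}}{\norm{\omega}_{L^p}\le1}=+\infty$, choose $\omega_m$ with $\norm{\omega_m}_{L^p}\le 1$ and $\norm{N_f(\omega_m)}_{L^q}$ growing rapidly; using absolute continuity of the Lebesgue integral, localize each $\omega_m$ onto a small measurable set whose image still carries a controlled amount of $L^q$-mass, and glue suitably scaled restrictions on pairwise disjoint sets to build a single $\omega\in L^p(0,T;X)$ with $\int_0^T\norm{f(t,\omega(t))}^q\,dt=+\infty$, contradicting the hypothesis. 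From this a priori bound I would extract the pointwise estimate: with $\mu_n(t)=\sup\set{\norm{f(t,x)}}{\norm{x}\le n}$ (measurable, the supremum being a countable supremum by separability and the continuity of $f(t,\cdot)$), a measurable selection theorem — legitimate because $X$ is separable and $f$ is Carath\'eodory — gives $x_n(\cdot)$ with $\norm{x_n(t)}\le n$ and $\norm{f(t,x_n(t))}$ close to $\mu_n(t)$; testing the a priori bound on functions of the form $x_n(\cdot)\chi_E$ over measurable $E$ and optimizing yields $\mu_n(t)\le a\,n^{p/q}+b(t)$ with $b\in L^q(0,T)$, which is the claimed inequality once the normalization is undone.

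The step I expect to be the main obstacle is the disjoint-support gluing inside the a priori bound: one has to organize the supports and the scaling factors so that the assembled function has finite $L^p$-norm while the $L^q$-norm of its image diverges, and this is exactly where absolute continuity of the Lebesgue integral, together with the separability of $X$ for the accompanying measurable selections, are genuinely needed. The remainder is bookkeeping around these two ingredients, and the continuity statement then follows from the sufficiency argument once the growth condition has been proved necessary.
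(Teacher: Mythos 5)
First, a point of reference: the paper does not prove this statement at all — it is imported verbatim as \cite[Theorem 3.1]{Lu-Pa} — so there is no in-paper argument to compare yours against, and your proof has to be judged on its own. Your overall route is the classical Krasnosel'skii--Vainberg scheme that Lucchetti and Patrone adapt to the Bochner setting, and most of it is sound: the measurability preliminary, the sufficiency estimate via $(s+s')^q\le 2^{q-1}(s^q+s'^q)$, and the continuity proof by the subsequence principle plus dominated convergence are all correct. The disjoint-support gluing for the a priori bound, which you flag as the main obstacle, also works as you describe it: choosing $A_m$ with $\int_{A_m}\norm{\omega_m}^p\le 4^{-m}$, $\mathrm{meas}(A_m)$ decreasing fast enough (inductively, using absolute continuity of $\int\norm{f(\cdot,\omega_i(\cdot))}^q$ for the finitely many earlier indices $i$), and disjointifying via $B_m=A_m\setminus\bigcup_{j>m}A_j$ produces a single $\omega\in L^p(0,T;X)$ with $N_f(\omega)\notin L^q(0,T;Y)$; no rescaling is even needed.

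The one genuine gap is the last step of the necessity argument. Testing the a priori bound on $x_n(\cdot)\chi_E$ over sets $E$ with $\mathrm{meas}(E)\le n^{-p}$ and optimizing gives, for each fixed $n$, a decomposition $\mu_n(t)\le M n^{p/q}+\beta_n(t)$ with $\norm{\beta_n}_{L^q(0,T)}\le C$ \emph{uniformly in $n$} — but that is not the theorem: you need a single $b\in L^q(0,T)$ working for all $n$ simultaneously, and $\sup_n\beta_n$ of a merely norm-bounded family need not lie in $L^q$ (nor is $(\mu_n-Mn^{p/q})_+$ monotone in $n$, since both terms increase). The standard repair is to target directly $b(t)=\sup_{x\in D}\bigl(\norm{f(t,x)}-a\norm{x}^{p/q}\bigr)_+$ with $D$ countable dense, write it as the increasing limit of $b_N(t)=\max_{x\in D_N}(\cdots)_+$ over finite $D_N$, realize each $b_N$ by a simple selection $x_N(\cdot)$, and bound $\int_0^T b_N^q$ uniformly by combining the elementary inequality $(u-v)_+^q\le u^q-v^q$ (for $u\ge v\ge 0$, $q\ge1$) with the a priori bound applied to a partition of $\set{t}{b_N(t)>0}$ into pieces on which $\int\norm{x_N}^p\le 1$; choosing $a$ large enough relative to the a priori constant makes the $\norm{x_N}^p$ terms cancel, and monotone convergence then puts $b$ in $L^q(0,T)$. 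With that substitution your plan becomes a complete proof.
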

%*******************************************

\vskip 2 mm
From a map $f:\R\times X\to Y$ we consider the Nemytskii operator of $f$ in the almost periodic case
\begin{equation*}
\mathcal{N}_{f}:AP(\R,X)\to AP(\R,Y)
\end{equation*}
defined by 
\begin{equation}
\label{eq1}
\mathcal{N}_{f}(u)(t)=f(t,u(t)) \quad \text{  for } t\in\R .
\end{equation}
\vskip 2 mm
According to Yoshizawa \cite{Yo}, a continuous function $f:\R\times X\to Y$ is said to be {\it almost periodic in $t$ uniformly with respect to $x$} if for for each compact set $K\subset X$ and  for each $\eps > 0$, the set
$$\set{\tau\in\R}{\sup_{t \in \R}\sup_{x \in K} \norm{f (t + \tau,x) - f (t,x)}\leq  \eps}$$
 is relatively dense in $\R$.  We denote the space of all such functions by $AP_U(\R\times X,Y)$.
\vskip 1 mm
A function $f\in AP_U(\R\times X,Y)$ if and only if 
for all $x\in X$, $f(\cdot,x)\in AP(\R,Y)$  and $f$ satisfies
\begin{equation}
\label{eq60}
\left\{
\begin{array}
[c]{l}
\text{ For each compact set K }\subset X, \quad  \forall \eps>0, \, \exists \delta>0,\\
\text{ \ \ \ \ \ \ \ \ \ \ \ \ \ \ \ \ \ \ \ \ \ }\\
\forall x_1 ,  x_2\in K, \, \forall t\in\R, \quad\norm{x_1-x_2}\leq \delta \implies \norm{f(t,x_1)-f(t,x_2)}\leq\eps 
\end{array}
\right.  \end{equation}
\cite[Lemma 2.6]{Ci-Fa-Ng}.

%%%%%%%%%
\begin{theorem} \cite{Bl-Ci-Ng-Pe, Ci-Fa-Ng}
\label{th9}
%theorem 2.2
Let $f:\R\times X\to Y$ be a map. The following assertions are equivalent.
\vskip2mm
{\bf i)} The Nemytskii operator $\mathcal{N}_{f}$ defined by \eqref{eq1} maps $AP(\R,X)$ into $AP(\R,Y)$ and it is continuous.
\vskip2mm
{\bf ii)} $f\in AP_U(\R\times X,Y)$.
\vskip2mm
{\bf iii)}
For all $x\in X$, $f(\cdot,x)\in AP(\R,Y)$  and $f$ satisfies \eqref{eq60}.
\vskip2mm
{\bf iv)} The map $\Phi:X\to AP(\R,Y)$ defined by  $\Phi(x)=f(\cdot,x)$ for $x\in X$
is well-defined and continuous. 
\vskip2mm
{\bf v)} For each compact set $K\subset X$, the map $\tilde{f}_K:\R\to C(K,Y)$ defined by  $\tilde{f}_K(t)=f(t,\cdot)$ for $t\in\R$,
is almost periodic: $\tilde{f}_K\in AP(\R,C(K,Y))$. 
\end{theorem}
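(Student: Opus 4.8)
The plan is to prove the five assertions equivalent by running the cycle
$$\text{(i)}\implies\text{(iv)}\implies\text{(iii)}\Longleftrightarrow\text{(ii)}\implies\text{(v)}\implies\text{(i)},$$
using that the equivalence (ii)$\Leftrightarrow$(iii) has already been recorded (this is exactly the characterization of $AP_U(\R\times X,Y)$ by condition \eqref{eq60}, \cite[Lemma 2.6]{Ci-Fa-Ng}); once the cycle is closed, every implication follows. For (i)$\Rightarrow$(iv) I would observe that $x\mapsto(\text{constant function equal to }x)$ is a linear isometry $X\to AP(\R,X)$ and that $\Phi$ is the composition of this embedding with $\mathcal{N}_f$, so well-definedness and continuity of $\Phi$ are inherited from those of $\mathcal{N}_f$. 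For (iv)$\Rightarrow$(iii), the well-definedness of $\Phi$ is literally the statement $f(\cdot,x)\in AP(\R,Y)$ for every $x$; to extract the uniform estimate \eqref{eq60} from the merely pointwise continuity of $\Phi$, I would fix a compact $K\subset X$ and $\eps>0$, use continuity of $\Phi$ at each $x\in K$ to get a ball $B(x,\delta_x)$ on which $\sup_t\norm{f(t,\cdot)-f(t,x)}\leq\eps/2$, pass to a finite subcover $B(x_i,\delta_{x_i}/2)$, put $\delta=\min_i\delta_{x_i}/2$, and conclude by the triangle inequality that $\sup_t\norm{f(t,x_1)-f(t,x_2)}\leq\eps$ whenever $x_1,x_2\in K$ with $\norm{x_1-x_2}\leq\delta$.

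For (ii)$\Rightarrow$(v) there are two things to check: that $\tilde f_K:\R\to C(K,Y)$ is continuous, and that it is Bochner almost periodic. Continuity (in fact uniform continuity) I would obtain by combining \eqref{eq60} with the uniform continuity of the finitely many functions $f(\cdot,x_i)\in AP(\R,Y)$ attached to a $\delta$-cover of $K$: for $\abs{t-s}$ small and arbitrary $x\in K$, bound $\norm{f(t,x)-f(s,x)}$ by inserting the nearest center $x_i$. Almost periodicity is then immediate, because for every $\eps>0$ the set of $\eps$-almost periods of $\tilde f_K$ is precisely $\set{\tau\in\R}{\sup_{t\in\R}\sup_{x\in K}\norm{f(t+\tau,x)-f(t,x)}\leq\eps}$, which is relatively dense by the definition of $AP_U(\R\times X,Y)$; hence $\tilde f_K\in AP(\R,C(K,Y))$.

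The bulk of the work — and the main obstacle — is (v)$\Rightarrow$(i), where one must prove both that $\mathcal{N}_f$ maps $AP(\R,X)$ into $AP(\R,Y)$ and that it is continuous. For the first part I would use the classical fact that an almost periodic $u$ has relatively compact range, set $K=\overline{u(\R)}$, and note that $t\mapsto f(t,u(t))$ is continuous (joint continuity of $f$ follows from $f(\cdot,x)$ continuous together with \eqref{eq60}); then, given $\eps>0$, I take $\delta>0$ associated with $K$ and $\eps/2$ by \eqref{eq60}, and exploit that $(u,\tilde f_K)\in AP(\R,X\times C(K,Y))$ to find a relatively dense set of $\tau$ with $\sup_t\norm{u(t+\tau)-u(t)}\leq\delta$ and $\sup_t\sup_{x\in K}\norm{f(t+\tau,x)-f(t,x)}\leq\eps/2$ simultaneously; splitting $f(t+\tau,u(t+\tau))-f(t,u(t))$ through $f(t+\tau,u(t))$ and applying \eqref{eq60} to the first difference gives $\norm{\mathcal{N}_f(u)(t+\tau)-\mathcal{N}_f(u)(t)}\leq\eps$ for all $t$. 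For continuity of $\mathcal{N}_f$, given $u_n\to u$ in $AP(\R,X)$, I would first verify that $\overline{u(\R)\cup\bigcup_n u_n(\R)}$ is compact — here uniform convergence reduces a general $\eps$-net question to finitely many relatively compact ranges — and then apply \eqref{eq60} to this compact set to get, for $\eps>0$, a $\delta>0$ with $\norm{\mathcal{N}_f(u_n)-\mathcal{N}_f(u)}_{\infty}\leq\eps$ as soon as $\norm{u_n-u}_{\infty}\leq\delta$. The genuinely delicate points are thus the compactness of this union of ranges and the upgrade from pointwise continuity of $\Phi$ to the uniform estimate \eqref{eq60}; the remainder is triangle-inequality bookkeeping together with standard facts about products and common almost periods of Bohr almost periodic functions.
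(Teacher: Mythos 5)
The paper does not actually prove this theorem: it is recalled from the literature, and the remark following it merely assembles citations ((i) $\Leftrightarrow$ (ii) from Blot et al., (ii) $\Leftrightarrow$ (iii) and (ii) $\Leftrightarrow$ (v) from the cited lemmas) plus the short observation that (iv) amounts to uniform continuity of $\Phi$ on compact sets, which is (iii). Your proposal is therefore a genuinely different route: a self-contained proof organized as the single cycle (i) $\Rightarrow$ (iv) $\Rightarrow$ (iii) $\Leftrightarrow$ (ii) $\Rightarrow$ (v) $\Rightarrow$ (i). The individual arguments are sound: the embedding of $X$ into $AP(\R,X)$ as constant functions for (i) $\Rightarrow$ (iv); the finite-subcover upgrade from pointwise to uniform continuity on compacts for (iv) $\Rightarrow$ (iii); the identification of the $\eps$-almost periods of $\tilde f_K$ with the relatively dense set in the definition of $AP_U$ for (ii) $\Rightarrow$ (v); and, in (v) $\Rightarrow$ (i), the relative compactness of $\overline{u(\R)}$ and of $\overline{u(\R)\cup\bigcup_n u_n(\R)}$ together with common almost periods of the pair $(u,\tilde f_K)$. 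This buys a proof readable without the external references, at the cost of length.

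One step needs patching for the cycle to actually close. In (v) $\Rightarrow$ (i) you repeatedly invoke \eqref{eq60} (for joint continuity of $t\mapsto f(t,u(t))$, for the $\delta$ attached to $K$ and $\eps/2$, and for the continuity of $\mathcal{N}_f$), but \eqref{eq60} is part of assertion (iii), not of (v); as written, the implication (v) $\Rightarrow$ (i) silently assumes (iii), and your chain would then only establish (i) $\Leftrightarrow$ (ii) $\Leftrightarrow$ (iii) $\Leftrightarrow$ (iv) $\Rightarrow$ (v) without returning from (v). The fix is one line, and it is the same Ascoli argument the paper itself uses in the proof of Theorem \ref{th11}: if $\tilde f_K\in AP(\R,C(K,Y))$ then its range is relatively compact in $C(K,Y)$, hence equicontinuous, hence (since $K$ is compact) equi-uniformly continuous, which is exactly \eqref{eq60} for $K$; taking $K=\{x\}$ also gives $f(\cdot,x)\in AP(\R,Y)$. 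Thus (v) $\Rightarrow$ (iii), and with that noted your use of \eqref{eq60} inside (v) $\Rightarrow$ (i) is legitimate. With this insertion the proposal is correct and complete.
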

%%%%%%%%%

%%%%%%%%%
\begin{remark}
%remark 2.3
Remark that $X$ may be regarded as a Banach subspace of $AP(\R,X)$, then the map $\Phi$ defined in iv) is the restriction of the Nemytskii operator $\mathcal{N}_{f}$ on the subspace $X$.
\vskip 2 mm
{\bf i) $\Longleftrightarrow$ ii)} is proved in \cite[Theorem 3.5 \& 3.12]{Bl-Ci-Ng-Pe} and
{\bf ii) $\Longleftrightarrow$ iii)} in  \cite[Lemma 2.6]{Ci-Fa-Ng}.
Since $X$ is a metric space, {\bf iv)} $\Longleftrightarrow$
the restriction of $\Phi$ to each compact set $K\subset X$ is well-defined and uniformly continuous, which is equivalent to {\bf iii)}. {\bf ii) $\Longleftrightarrow$ v)} results of \cite[Lemma 3.3]{Bl-Ci-Ng-Pe}.
\end{remark}
%%%%%%%%%

\vskip 2 mm
From a map $f:\R\times X\to Y$ we consider the Nemytskii operator of $f$ in the almost automorphic case
\begin{equation*}
\mathcal{N}_{f}:AA(\R,X)\to AA(\R,Y)
\end{equation*}defined by \eqref{eq1}.

\vskip 2 mm
According to Blot et al. \cite[Section 2]{Bl-Ci-Ng-Pe}, a continuous function $f:\R\times X\to Y$ is said to be {\it almost automorphic in $t$ uniformly with respect to $x$} if
for all $x\in X$, $f(\cdot,x)\in AA(\R,Y)$  and $f$ satisfies \eqref{eq60}.
We denote the space of all such functions by $AA_U(\R\times X,Y)$. 
\vskip 1 mm
Let $f:\R\times X\to Y$ be a continuous function.
From \cite[Theorem 3.14]{Ci-Fa-Ng}, we have he following statement: $f\in AA_U(\mathbb{R}\times X,Y)$ if and only   if for
each compact set $K\subset X$ and if for all sequence of real numbers $(t_{k}^{\prime})_{k\in\N}$ admits  a subsequence  denoted by $(t_{k})_{k\in\N}$ and there exists a function $g:\mathbb{R}\times X \to Y$ such that for all $t\in\R$
\begin{equation*}
\lim_{k\rightarrow +\infty}\sup_{x\in K} \parallel f(t+t_k,x) - g(t,x) \parallel =0  \quad\text{and}\quad
\lim_{k\rightarrow +\infty}\sup_{x\in K} \parallel g(t-t_k,x) - f(t,x) \parallel =0 .
\end{equation*}

%%%%%%%%%
\begin{theorem} \cite{Bl-Ci-Ng-Pe, Ci-Fa-Ng}
\label{th8}
%theorem 2.4
Let $f:\R\times X\to Y$ be a map. The following assertions are equivalent.
\vskip2mm
{\bf i)} The Nemytskii operator $\mathcal{N}_{f}$ defined by \eqref{eq1} maps $AA(\R,X)$ into $AA(\R,Y)$ and it is continuous.
\vskip2mm
{\bf ii)} $f\in AA_U(\R\times X,Y)$.
\vskip2mm
{\bf iii)}
For all $x\in X$, $f(\cdot,x)\in AA(\R,Y)$  and $f$ satisfies \eqref{eq60}.
\vskip2mm
{\bf iv)} The map $\Phi:X\to AA(\R,Y)$ defined by  $\Phi (x)=f(\cdot,x)$ for $x\in X$
is well-defined and continuous.
\vskip2mm
{\bf v)} For each compact set $K\subset X$, the map $\tilde{f}_K:\R\to C(K,Y)$ defined by  $\tilde{f}_K(t)=f(t,\cdot)$ for $t\in\R$,
is almost automorphic: $\tilde{f}_K\in AA(\R,C(K,Y))$.
\end{theorem}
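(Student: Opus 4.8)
Here is a sketch of how I would argue; since the statement is the almost automorphic counterpart of Theorem~\ref{th9}, the plan is to run the same scheme with pointwise convergence of translates in place of uniform convergence, and to feed in the characterisation of $AA_U(\R\times X,Y)$ recalled just above the theorem (from \cite[Theorem~3.14]{Ci-Fa-Ng}). First I would settle the equivalences ii) $\Leftrightarrow$ iii) $\Leftrightarrow$ iv) $\Leftrightarrow$ v), which involve only $f$ and not the domain $AA(\R,X)$; then i) $\Rightarrow$ iv), which is immediate; and finally iii) (equivalently ii) or v)) $\Rightarrow$ i), which is the only substantial point. Together these close the cycle.

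For ii) $\Leftrightarrow$ iii): by definition $AA_U(\R\times X,Y)$ is the set of \emph{continuous} $f$ such that $f(\cdot,x)\in AA(\R,Y)$ for every $x$ and \eqref{eq60} holds, so under iii) only the joint continuity of $f$ needs an argument, and it follows by writing $\norm{f(t_n,x_n)-f(t,x)}\leq\norm{f(t_n,x_n)-f(t_n,x)}+\norm{f(t_n,x)-f(t,x)}$ and bounding the first term by \eqref{eq60} with the compact set $\{x_n:n\in\N\}\cup\{x\}$ and the second by continuity of $f(\cdot,x)$. For iii) $\Leftrightarrow$ iv): exactly as in the Remark after Theorem~\ref{th9}, since $X$ is a metric space $\Phi$ is continuous iff $\Phi|_K$ is uniformly continuous for every compact $K\subset X$, and this uniform continuity in the supremum norm of $AA(\R,Y)$ is precisely \eqref{eq60}, while ``$\Phi$ well-defined'' is exactly $f(\cdot,x)\in AA(\R,Y)$. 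For ii) $\Leftrightarrow$ v): the characterisation recalled above says that for each compact $K$ the translates of $\tilde{f}_K$ converge pointwise, along a subsequence and in both directions, in the norm of $C(K,Y)$; combined with the continuity of $\tilde{f}_K:\R\to C(K,Y)$ (immediate from joint continuity of $f$ and compactness of $K$) this is exactly $\tilde{f}_K\in AA(\R,C(K,Y))$. Finally i) $\Rightarrow$ iv): $X$ embeds isometrically into $AA(\R,X)$ as the constant maps (Remark after Theorem~\ref{th9}), and $\Phi$ is the composition of this embedding with $\mathcal{N}_f$.

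For iii) (hence v)) $\Rightarrow$ i), fix $u\in AA(\R,X)$. First, $K:=\overline{u(\R)}$ is compact: for any sequence $(u(s_n))_n$ the definition of $AA(\R,X)$ gives a subsequence along which $u(\cdot+s_{n_k})$ converges pointwise, hence at $0$, so $u(s_{n_k})$ converges. By v), $\tilde{f}_K\in AA(\R,C(K,Y))$. Then $h:=(\tilde{f}_K,u):\R\to C(K,Y)\times X$ is almost automorphic: given a real sequence, a diagonal extraction yields one subsequence $(t_k)$ along which $u(\cdot+t_k)\to v$, $v(\cdot-t_k)\to u$, $\tilde{f}_K(\cdot+t_k)\to\tilde{g}_K$, $\tilde{g}_K(\cdot-t_k)\to\tilde{f}_K$ pointwise, with $v(\R)\subset K$ because $K$ is closed; hence $h(\cdot+t_k)\to(\tilde{g}_K,v)$ and $(\tilde{g}_K,v)(\cdot-t_k)\to h$ pointwise. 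Since $h$ and $(\tilde{g}_K,v)$ take values in $C(K,Y)\times K$, on which the evaluation map $\mathrm{ev}(\varphi,x)=\varphi(x)$ is continuous, these pointwise limits are preserved by $\mathrm{ev}$, so $\mathcal{N}_f(u)=\mathrm{ev}\circ h\in AA(\R,Y)$. Continuity of $\mathcal{N}_f$ is then exactly as in the almost periodic case: if $u_n\to u$ in $AA(\R,X)$ then $K':=\overline{\bigcup_n u_n(\R)\cup u(\R)}$ is compact (the ranges are relatively compact and $u_n\to u$ uniformly), and \eqref{eq60} applied to $K'$ turns $\norm{u_n-u}_\infty\leq\delta$ into $\sup_{t\in\R}\norm{f(t,u_n(t))-f(t,u(t))}\leq\eps$.

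I expect the main obstacle to be the composition step iii) $\Rightarrow$ i): producing a \emph{single} subsequence along which the translates of $f(\cdot,x)$ converge uniformly over the (possibly uncountable) compact set $K=\overline{u(\R)}$, in both directions. Rather than carry out the dense-subset-plus-equicontinuity extraction by hand, the efficient route is to invoke \cite[Theorem~3.14]{Ci-Fa-Ng} (equivalently condition v)); once $\tilde{f}_K\in AA(\R,C(K,Y))$ is in hand, the diagonal argument for $h$, the continuity of $\mathrm{ev}$ on $C(K,Y)\times K$, and the continuity of $\mathcal{N}_f$ are routine and parallel the almost periodic case.
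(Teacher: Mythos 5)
Your proposal is correct, but it does more work than the paper, which does not actually prove this theorem: it is stated with citations, and the accompanying remark simply attributes i) $\Longleftrightarrow$ ii) to \cite[Theorem 9.6]{Bl-Ci-Ng-Pe}, ii) $\Longleftrightarrow$ v) to \cite[Theorem 3.14]{Ci-Fa-Ng}, and disposes of ii) $\Longleftrightarrow$ iii) $\Longleftrightarrow$ iv) by the same metric-space observation you use (continuity of $\Phi$ on $X$ is equivalent to uniform continuity of $\Phi|_K$ on each compact $K$, which is \eqref{eq60}). Where you genuinely diverge is the composition step: instead of citing the implication ii) $\Rightarrow$ i), you prove it from v) by forming the pair $h=(\tilde{f}_K,u)$ with $K=\overline{u(\R)}$, extracting a common subsequence of translates, and composing with the evaluation map on $C(K,Y)\times K$; this is sound (the limit $v$ of the translates of $u$ stays in the closed set $K$, $\tilde g_K(t)$ lies in the complete space $C(K,Y)$, and joint continuity of $f$ --- which you correctly note must be derived from iii) via \eqref{eq60} before ii) can be claimed --- gives continuity of $\mathcal{N}_f(u)$). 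Your continuity argument for $\mathcal{N}_f$ via the compact $K'=\overline{\bigcup_n u_n(\R)\cup u(\R)}$ is exactly the mechanism of Lemma \ref{lem1} in the paper. What your route buys is a self-contained proof modulo only the characterisation \cite[Theorem 3.14]{Ci-Fa-Ng} of $AA_U$; what the paper's route buys is brevity, since the hard implication is already in the literature.
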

%%%%%%%%%

%%%%%%%%%
\begin{remark}
%remark 2.5
{\bf i) $\Longleftrightarrow$ ii)} is proved in \cite[Theorem 9.6]{Bl-Ci-Ng-Pe}. 
Since $X$ is a metric space, {\bf iv)} $\Longleftrightarrow$
the restriction of $\Phi$ to each compact set $K\subset X$ is well-defined and uniformly continuous
$\Longleftrightarrow$
{\bf iii)}, which is the 
definition of $f\in AA_U(\R\times X,Y)$. Then {\bf iv) $\Longleftrightarrow$ iii) $\Longleftrightarrow$ ii)}. {\bf ii) $\Longleftrightarrow$ v)} results of \cite[Theorem 3.14]{Ci-Fa-Ng}.
\end{remark}
%%%%%%%%%

%%%%%%%%%%%%%%%%%%%%%%%%%%%
\section{Bochner transform}
\label{ii}
%%%%%%%%%%%%%%%%%%%%%%%%%%%
%Section 3

In this Section we build  a left inverse of the Bochner transform which permits to state that the range of Bochner transform is closed and admits a topological complement. This left inverse  will be used to state the main result of Section \ref{iv}. 
To build a left inverse of the Bochner transform we begin to give  a description of  the range under the Bochner transform of almost periodic  or almost automorphic space in the sense of Stepanov. 
 \vskip 2 mm
Let $X$ be a Banach space and $1\leq p<+\infty$. 
The Bochner transform
\begin{equation*}
B:BS^p(\R,X)\to BC(\R,L^p(0,1;X)) \text{ with } Bu=u^b
\end{equation*}
is an isometry which is not surjective. We also have  $\ds B(S_{ap}^p(\R,X))\varsubsetneqq AP(\R,L^p(0,1;X))$ and $\ds B(S_{aa}^p(\R,X))\varsubsetneqq AA(\R,L^p(0,1;X))$.
\vskip 2 mm
Now we  give  a description of  ranges of $S_{ap}^p(\R,X)$ and $S_{aa}^p(\R,X)$
 under the Bochner transform. For that we introduce  the discrete Bochner transform:
\begin{equation}
\label{eq2}
D:BS^p(\R,X)\to \ell^{\infty}(\Z,L^p(0,1;X)),\,Du(n)=u^b(n)\text{ for }n\in\Z.
\end{equation}
Remark that $D$ may be also defined by $D=R\circ B$ where is $R$ the restriction operator defined by
\begin{equation}
\label{eq3}
R:BC(\R,L^p(0,1;X))\to \ell^{\infty}(\Z,L^p(0,1;X)) \text{ with } Ru(n)=u(n)\text{ for }n\in\Z .
\end{equation}

%*******************************************
\begin{proposition}
\label{th1}
%proposition 3.1
{\bf i)} The discrete Bochner transform $D$ defined by \eqref{eq2} is an homeomorphism of 
$BS^p(\R,X)$ onto $\ell^{\infty}(\Z,L^p(0,1;X))$ (linear, bijective and bicontinuous) with
\begin{equation}
\label{eq40}
D^{-1}U(t) = U([t])(\{t\}) \quad \text{ for } U\in \ell^{\infty}(\Z,L^p(0,1;X)) \text{ and } t\in\R .
\end{equation}
\vskip2mm
{\bf ii)} The operator $D$  is also an homeomorphism of $S_{aa}^p(\R,X)$ onto $AA(\Z,L^p(0,1;X))$: 
$$D\left(S_{aa}^p(\R,X)\right)=AA(\Z,L^p(0,1;X)) .$$
\vskip2mm
{\bf iii)} The operator $D$  is also an homeomorphism of $S_{ap}^p(\R,X)$ onto $AP(\Z,L^p(0,1;X))$: 
$$D\left(S_{ap}^p(\R,X)\right)=AP(\Z,L^p(0,1;X)) .$$
\end{proposition}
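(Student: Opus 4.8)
The plan is to establish (i) first, since (ii) and (iii) will follow by restricting the homeomorphism $D$ and identifying the image of each subspace. For (i), I would first check that the formula \eqref{eq40} defines a genuine map $\ell^{\infty}(\Z,L^p(0,1;X))\to BS^p(\R,X)$. Given $U\in\ell^{\infty}(\Z,L^p(0,1;X))$, set $v(t)=U([t])(\{t\})$; one must verify $v\in L^p_{loc}(\R,X)$ (on each interval $[n,n+1)$ it agrees a.e.\ with a translate of the $L^p(0,1;X)$-representative of $U(n)$, hence is Bochner-measurable and $p$-integrable there) and that $\sup_{t\in\R}\norm{v^b(t)}_{L^p}<\infty$. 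The key subtlety here is that $v^b(t)$ for non-integer $t$ splices together the tail of $U([t])$ and the head of $U([t]+1)$, so one estimates $\norm{v^b(t)}_{L^p}^p\le \norm{U([t])}_{L^p}^p+\norm{U([t]+1)}_{L^p}^p\le 2\norm{U}_\infty^p$, giving $v\in BS^p(\R,X)$ with $\norm{v}_{S^p}\le 2^{1/p}\norm{U}_\infty$. Thus the map $U\mapsto v$ is bounded linear.

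Next I would verify that this map is a two-sided inverse of $D$. That $D(D^{-1}U)=U$ is immediate from \eqref{eq40}: evaluating at $n\in\Z$ gives $v^b(n)(\theta)=v(n+\theta)=U(n)(\theta)$ for $\theta\in(0,1)$. For the other direction, given $u\in BS^p(\R,X)$, one checks $D^{-1}(Du)(t)=u^b([t])(\{t\})=u([t]+\{t\})=u(t)$ a.e. Linearity and boundedness of $D$ are clear (indeed $\norm{Du}_\infty\le\norm{u}_{S^p}$), so with the bound on $D^{-1}$ just obtained, $D$ is a linear homeomorphism. This proves (i).

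For (ii) and (iii), the strategy is identical in the two cases, so I describe (iii). Since $D=R\circ B$ and $B$ is an isometry onto its (closed) image, and since $R$ simply samples a continuous function at integers, the real content is the equivalence $u^b\in AP(\R,L^p(0,1;X)) \iff \bigl(u^b(n)\bigr)_{n\in\Z}\in AP(\Z,L^p(0,1;X))$. The implication $\Rightarrow$ is easy: restricting an $\eps$-almost period $\tau$ of $u^b$ to its integer part, or rather using that $u^b$ is uniformly continuous so that almost-periods can be taken in $\Z$ up to adjusting $\eps$, gives relative density in $\Z$ of integer almost-periods of the sampled sequence. For $\Leftarrow$: given an almost-periodic sequence $(u^b(n))_n$, and knowing already that $u\in BS^p(\R,X)$ so $u^b\in BC(\R,L^p(0,1;X))$, one must upgrade integer almost-periods to real ones. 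Here the point is that $u^b$ is not merely continuous but built from the $L^p$-structure: if $p\in\Z$ satisfies $\sup_n\norm{u^b(n+p)-u^b(n)}_{L^p}\le\eps$, then for arbitrary $t$, writing $t=[t]+\{t\}$, the quantity $\norm{u^b(t+p)-u^b(t)}_{L^p}^p=\int_0^1\norm{u(t+p+\theta)-u(t+\theta)}^p\,d\theta$ splits over the two integer cells $[t]$ and $[t]+1$ touched by $[t,t+1)+\{t\}$, each contribution being bounded by $\norm{u^b([t]+p)-u^b([t])}_{L^p}^p$ or $\norm{u^b([t]+1+p)-u^b([t]+1)}_{L^p}^p$, hence $\norm{u^b(t+p)-u^b(t)}_{L^p}\le 2^{1/p}\eps$. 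So $u^b\in AP(\R,L^p(0,1;X))$, i.e.\ $u\in S^p_{ap}(\R,X)$. Combined with the easy direction and the fact that $D$ is already a homeomorphism on the ambient spaces, this yields $D(S^p_{ap}(\R,X))=AP(\Z,L^p(0,1;X))$, and the same computation with "almost period" replaced by "admits a convergent subsequence along $(t_k)$", applied cellwise, gives (ii).

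I expect the main obstacle to be the $\Leftarrow$ direction in (ii)/(iii): carefully setting up the cell-splitting estimate that transfers an almost-period (or a subsequential limit) of the integer-sampled sequence back to all real shifts, keeping track of the factor $2^{1/p}$ and of which two consecutive integers $[t,t+1)+\{t\}$ meets. Everything else — measurability of $D^{-1}U$, the norm bound $\norm{D^{-1}}\le 2^{1/p}$, and the two-sided inverse identities — is routine once \eqref{eq40} is written down.
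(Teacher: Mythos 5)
Your treatment of (i) and (iii) is essentially the paper's own argument: the same cell-splitting inequality $\norm{v^b(t)}_{L^p}^p\le\norm{U([t])}_{L^p}^p+\norm{U([t]+1)}_{L^p}^p$ yields the two-sided norm estimate with constant $2^{1/p}$, the explicit formula gives the inverse, and for the almost periodic case an integer $\eps$-almost period of the sampled sequence is upgraded to a $2^{1/p}\eps$-almost period of $u^b$ over all real shifts. That part is fine.

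The gap is in part (ii), which is precisely the part the paper flags as the main difficulty. You dismiss it with ``the same computation with `almost period' replaced by `admits a convergent subsequence along $(t_k)$', applied cellwise,'' but the almost automorphic sequence property only produces subsequential limits along \emph{integer} shifts, whereas the definition of $S^p_{aa}$ requires handling an arbitrary sequence of \emph{real} numbers $(t_k')$. Writing $t_k=[t_k]+\{t_k\}$ and applying the cell estimate to $U(\cdot+[t_k])\to V$, what you actually get is that $u^b(t+t_k)$ is close to $v^b(t+\{t_k\})$, where $v$ is the function rebuilt from the limit sequence $V$ --- a quantity that still depends on $k$ through the fractional part. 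To obtain a genuine limit you must pass to a further subsequence along which $\{t_k\}\to\tau_*$ and invoke the continuity of $v^b$ to identify the limit function as the translate $v^b(\cdot+\tau_*)$; and you must then separately verify the return limit $v^b(t+\tau_*-t_k)\to u^b(t)$, which uses the cell estimate in the reverse direction (with shift $-[t_k]$ and offset $-\{t_k\}$) together with the continuity of $u^b$. None of this is a ``cellwise'' repetition of the almost periodic computation, where the candidate limit object never needed to be constructed. Your plan as written does not produce a well-defined limit function, so (ii) is not yet proved; the fix is exactly the extraction of $\tau_*$ and the two continuity arguments above, which is what the paper's Lemma on $\norm{(w_1)^b(s+p+\tau)-(w_2)^b(s+\tau)}_{L^p}$ is designed to support.
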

%*******************************************

The main difficulty to proof  Proposition \ref{th1} is to state that  $AA(\Z,L^p(0,1;X))$ is included in $D\left(S_{aa}^p(\R,X)\right)$; for that we use the following lemma.

%****************************************
\begin{lemma}
\label{lem4}
%lemma 3.2
Let  $w_1$, $w_2\in BS^p(\R,X)$. For all $p\in\Z$, $s$ and $\tau\in\R$ such that $\abs{\tau}\leq 1$, one has
\begin{equation*}
\norm{(w_1)^b(s+p+\tau)-(w_2)^b(s+\tau)}_{L^p}^p \leq \sum_{j=[s]-1}^{[s]+2}
\norm{(w_1)^b(j+p)-(w_2)^b(j)}_{L^p}^p .
\end{equation*}
\end{lemma}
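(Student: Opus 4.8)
The plan is to rewrite both sides as integrals of the single nonnegative function $g(\sigma):=\norm{w_1(\sigma+p)-w_2(\sigma)}_X^p$ over intervals of $\R$, and then to exploit that the interval occurring on the left is contained in the one occurring on the right.

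First I would unfold the definition of the Bochner transform. Since $u^b(t)(\theta)=u(t+\theta)$ for $\theta\in(0,1)$, the left-hand side equals
\begin{equation*}
\int_0^1\norm{w_1(s+p+\tau+\theta)-w_2(s+\tau+\theta)}_X^p\,d\theta,
\end{equation*}
and the change of variable $\sigma=s+\tau+\theta$ turns this into $\int_{s+\tau}^{s+\tau+1}g(\sigma)\,d\sigma$. In the same way, for each integer $j$ one has $\norm{(w_1)^b(j+p)-(w_2)^b(j)}_{L^p}^p=\int_j^{j+1}g(\sigma)\,d\sigma$; since the intervals $(j,j+1)$ with $j=[s]-1,[s],[s]+1,[s]+2$ form, up to a null set, a partition of $([s]-1,[s]+3)$, the right-hand side of the claimed inequality equals $\int_{[s]-1}^{[s]+3}g(\sigma)\,d\sigma$. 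All these integrals are finite because $w_1,w_2\in BS^p(\R,X)\subset L_{loc}^p(\R,X)$, so $g\in L_{loc}^1(\R)$.

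It then remains to check the inclusion $(s+\tau,s+\tau+1)\subseteq([s]-1,[s]+3)$, and this is exactly where the hypothesis $\abs{\tau}\le1$ is used: writing $s=[s]+\{s\}$ with $0\le\{s\}<1$, one gets $s+\tau\ge[s]+0-1=[s]-1$ and $s+\tau+1<[s]+1+1+1=[s]+3$. Since $g\ge0$ is measurable, integrating over the smaller interval yields a value no larger than integrating over the larger one, which is precisely the asserted inequality. There is no serious obstacle in this argument; the only points requiring a little care are keeping the change of variables consistent (the shift is by the integer $p$, not to be confused with the exponent $p$) and the fact that $g$ is defined only almost everywhere, which is harmless here since we merely compare integrals of a nonnegative function.
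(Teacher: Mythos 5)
Your proposal is correct and follows essentially the same route as the paper: both rewrite the two sides as integrals of $\sigma\mapsto\norm{w_1(p+\sigma)-w_2(\sigma)}^p$ over $(s+\tau,s+\tau+1)$ and $([s]-1,[s]+3)$ respectively, and conclude by the inclusion of the first interval in the second. Your explicit verification of that inclusion from $\abs{\tau}\le1$ and $0\le\{s\}<1$ is the only step the paper leaves implicit.
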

 %****************************************

 %*******************************************
\begin{proof}
From the definition of the Bochner transform, we have
\begin{equation*}
\norm{(w_1)^b(s+p+\tau)-(w_2)^b(s+\tau)}_{L^p}^p
= \int_{s+\tau}^{s+\tau+1} \norm{w_1(p+\sigma)-w_2(\sigma)}^p\,d\sigma
\end{equation*}
and
\begin{equation*}
\sum_{j=[s]-1}^{[s]+2}
\norm{(w_1)^b(j+p)-(w_2)^b(j)}_{L^p}^p
=  \int_{[s]-1}^{[s]+3} \norm{w_1(p+\sigma)-w_2(\sigma)}^p\,d\sigma .
\end{equation*}
The conclusion results of the following inequality
\begin{equation*}
\int_{s+\tau}^{s+\tau+1} \norm{w_1(p+\sigma)-w_2(\sigma)}^p\,d\sigma
\leq \int_{[s]-1}^{[s]+3} \norm{w_1(p+\sigma)-w_2(\sigma)}^p\,d\sigma .
\end{equation*}
\end{proof}
%*******************************************

%*******************************************
\vskip    2mm \noindent\textit{Proof of Proposition \ref{th1}.}
{\bf i)} For $u\in L_{loc}^p(\R,X)$, we have 
$$\int_t^{t+1}\norm{u(s)}^p\,ds
\leq  \int_{[t]}^{[t]+2}\norm{u(s)}^p\,ds
\leq2\sup_{n\in\Z}\int_{n}^{n+1}\norm{u(s)}^p\,ds$$
where the supremum may be equal to $+\infty$.
It follows that
\begin{equation}
\label{eq7}
\forall u\in L_{loc}^p(\R,X) , \quad \sup_{t\in\R}\left(\int_0^1\norm{u(t+\theta)}^p\,d\theta\right)^{\frac{1}{p}}
\leq
2^{\frac{1}{p}}\sup_{n\in\Z}\left(\int_0^1\norm{u(n+\theta)}^p\,d\theta\right)^{\frac{1}{p}}  .
\end{equation}
From \eqref{eq7} and with $\norm{Du(n)}_{L^p} = \left(\int_0^1\norm{u(t+\theta)}^p\,d\theta\right)^{\frac{1}{p}}$ for $n\in\Z$, we obtain
\begin{equation}
\label{eq6}
\forall u\in BS^p(\R,X) , \quad \sup_{n\in\Z}\norm{Du(n)}_{L^p} \leq\norm{u}_{S^p} \leq  2^{\frac{1}{p}} \sup_{n\in\Z}\norm{Du(n)}_{L^p} .
\end{equation}
 From \eqref{eq6}, it follows that the linear  operator $D$ is bounded and injective.
\vskip 2 mm
We set $\ds U\in\ell^{\infty}(\Z,L^p(0,1;X))$ and consider the function $u:\R\to X$ defined by 
\begin{equation*}
u(t)=U([t])(\{t\}) .
\end{equation*}
To state  that $D$ is surjective and \eqref{eq40}, we prove that $u\in BS^p(\R,X)$ and $Du=U$.
For $n\in\Z$ and $t\in\R$ such that $n\leq t <n+1$, we have $u(t)=U(n)(t-n)$, then $u\in L^p(n,n+1;X)$ for all $n\in\Z$ which implies  that $u\in L_{loc}^p(\R,X)$. 
From \eqref{eq7}, it follows
$\ds\sup_{t\in\R}\left(\int_0^1\norm{u(t+\theta)}^p\,d\theta\right)^{\frac{1}{p}}
\leq 2^{\frac{1}{p}}\sup_{n\in\Z}\norm{U(n)}_{L^p}<+\infty$, therefore $u\in BS^p(\R,X)$. By using definitions of $D$ and $u$, we have
$Du(n)(\theta)=u(n+\theta)=U(n)(\theta)$ for $n\in\Z$ and $\theta\in(0,1)$,
then $Du=U$.
The bicontinuity of $D$ results of \eqref{eq6}. 
\vskip2mm
{\bf ii)} For $u\in S_{aa}^p(\R,X)$, we have $Bu=u^b\in AA(\R,L^p(0,1;X))$. 
A direct consequence of the definition of an almost automorphic sequence and an almost automorphic function is that the restriction of an almost automorphic function to the integer numbers is an almost automorphic sequence, 
then $Du=(R\circ B)u\in AA(\Z,L^p(0,1;X))$ where is $R$ the restriction operator defined by \eqref{eq3},
consequently $D\left(S_{aa}^p(\R,X)\right)\subset AA(\Z,L^p(0,1;X))$.
\vskip1mm
For the reciprocal inclusion, let $U\in AA(\Z,L^p(0,1;X))$. By using  i), we can assert the existence  and uniqueness of $u\in BS^p(\R,X)$ such that $Du=U$. Now  prove that
$u\in S_{aa}^p(\R,X)$. Let $(t_{k}^{\prime})_{k\in\N}$ be a sequence of real numbers.   
Then $([t_{k}^{\prime}])_{k\in\N}$ is a sequence of integer numbers.
Since $(U(n))_{n\in\Z}$ is an almost automorphic sequence and $(\{t_{k}^{\prime}\})_{k\in\N}$ is a bounded sequence, there exist $V\in \ell^{\infty}(\Z,L^p(0,1;X))$, $\tau_*\in\R$ and a 
subsequence of $(t_{k}^{\prime})_{k\in\N}$, denoted by $(t_{k})_{k\in\N}$ such that 
\begin{equation}
\label{eq29}
\forall n\in\Z , \quad \lim_{k\to+\infty}\norm{U(n+[t_k])-V(n)}_{L^p} =0, 
\end{equation}
\begin{equation}
\label{eq30}
\forall n\in\Z , \quad \lim_{k\to+\infty}\norm{V(n-[t_k])-U(n)}_{L^p} =0, 
\end{equation}
\begin{equation*}
\lim_{k\to+\infty} \{t_k\}=\tau_*. 
\end{equation*}
By using  i), we can assert the existence  and uniqueness of $v\in BS^p(\R,X)$ such that $Dv=V$. 
We fix $t\in\R$. From Lemma \ref{lem4}, with $w_1=u$, $w_2=v$, $s=t$, $p=[t_k]$ and $\tau=\{t_k\}$, we obtain
\begin{equation*}
\norm{u^b(t+[t_k]+\{t_k\})-v^b(t+\{t_k\})}_{L^p}^p \leq \sum_{j=[t]-1}^{[t]+2}
\norm{u^b(j+[t_k])-v^b(j)}_{L^p}^p .
\end{equation*}
Since $t_k=[t_k]+\{t_k\}$, $j$ and  $[t_k]\in\Z$,  $u^b=U$ and $v^b=V$ on $\Z$, we have
\begin{equation*}
\norm{u^b(t+t_k)-v^b(t+\{t_k\})}_{L^p}^p \leq \sum_{j=[t]-1}^{[t]+2}
\norm{U(j+[t_k])-V(j)}_{L^p}^p 
\end{equation*}
and from \eqref{eq29}, we obtain
\begin{equation*}
\lim_{k\to+\infty}\norm{u^b(t+t_k)-v^b(t+\{t_k\})}_{L^p}^p =0 .
\end{equation*}
By using  the continuity of the Bochner transform  of $v$ and $\lim_{k\to+\infty} \{t_k\}=\tau_*$, we also have
\begin{equation*}
\lim_{k\to+\infty}\norm{v^b(t+\{t_k\})-v^b(t+\tau_*)}_{L^p}^p =0 .
\end{equation*}
Then
\begin{equation}
\label{eq34}
\lim_{k\to+\infty}\norm{u^b(t+t_k)-v^b(t+\tau_*)}_{L^p}^p =0 .
\end{equation}
By help of Lemma \ref{lem4} with $w_1=v$, $w_2=u$, $s=t+\tau_*$, $p=-[t_k]$ and $\tau=-\{t_k\}$, we obtain
\begin{equation*}
\norm{v^b(t+\tau_*-t_k)-u^b(t+\tau_*-\{t_k\})}_{L^p}^p \leq \sum_{j=[t+\tau_*]-1}^{[t+\tau_*]+2}
\norm{V(j-[t_k])-U(j)}_{L^p}^p ,
\end{equation*}
then from \eqref{eq30}, we have
\begin{equation*}
\lim_{k\to+\infty}\norm{v^b(t+\tau_*-t_k)-u^b(t+\tau_*-\{t_k\})}_{L^p}^p =0 .
\end{equation*}
By using $\lim_{k\to+\infty} \{t_k\}=\tau_*$ and  the continuity of $u^b$, we deduce
\begin{equation}
\label{eq35}
\lim_{k\to+\infty}\norm{v^b(t+\tau_*-t_k)-u^b(t)}_{L^p}^p =0 .
\end{equation}
From \eqref{eq34} and \eqref{eq35}, we deduce that $u^b\in AA(\R,L^p(0,1;X))$, then $u\in S_{aa}^p(\R,X)$. 
\vskip2mm
{\bf iii)} For $u\in S_{ap}^p(\R,X)$, we have $Bu=u^b\in AP(\R,L^p(0,1;X))$. 
The restriction of an almost periodic function to the integer numbers is an almost periodic sequence \cite[Theorem 1.27, p. 47]{Co}, 
then $Du=(R\circ B)u\in AP(\Z,L^p(0,1;X))$ where is $R$ the restriction operator defined by \eqref{eq3},
consequently $D\left(S_{ap}^p(\R,X)\right)\subset AP(\Z,L^p(0,1;X))$.
\vskip1mm
For the reciprocal inclusion, let $U\in AP(\Z,L^p(0,1;X))$. By using  i), we can assert the existence  and uniqueness of $u\in BS^p(\R,X)$ such that $Du=U$. Now  prove that
$u\in S_{ap}^p(\R,X)$. 
Let $n_0\in\Z$. From \eqref{eq7}, we have
\begin{equation*}
\sup_{t\in\R} \norm{u^b(t+n_0)-u^b(t)}_{L^p}\leq 2^{\frac{1}{p}}\sup_{n\in\Z} \norm{U(n+n_0)-U(n)}_{L^p} .
\end{equation*}
It follows that  an $\eps$-almost period  of the almost periodic sequence $(U(n))_{n\in\Z}$ is an $\eps_2$-almost period of $u^b$ with $\eps_2=2^{\frac{1}{p}}\eps$,  consequently $u^b\in AP(\R,L^p(0,1;X))$, which gives  $u\in S_{ap}^p(\R,X)$. This ends the proof.
%\end{proof}
\hfill$\Box$
%*******************************************

\vskip 2 mm
From Proposition \ref{th1} we can build a left inverse of the Bochner transform which permits to state that the range of Bochner transform is closed and admits a topological complement. 
For two Banach spaces $E$ and $F$, we denote by $\mathcal{L}(E,F)$  the space of bounded linear operators from $E$ into $F$. Recall that for $B\in\mathcal{L}(E,F)$, a {\it left inverse} of $B$ is an operator $L\in\mathcal{L}(F,E)$ such that $L\circ B = I_E$.

%*******************************************
\begin{theorem}
\label{th7}
%theorem 3.3
\vskip2mm
{\bf i)} 
The map $L:BC(\R , L^p(0,1;X))\to BS^p(\R , X)$ defined by
\begin{equation}
\label{eq41} LU(t) = U([t])(\{t\}) , \quad\text{ for } t\in\R 
\end{equation}
is a left inverse of the Bochner transform $B$ from $BS^p(\R , X)$ into $BC(\R , L^p(0,1;X))$. Moreover 
$${\rm Im}(B):=B(BS^p(\R,X)) \text{ and } 
M=\set{V\in BC(\R , L^p(0,1;X))}{\forall n\in\Z ,\, V(n)=0}$$ are two closed subspaces of $BC(\R , L^p(0,1;X))$ and 
$$BC(\R , L^p(0,1;X)) = {\rm Im}(B) \oplus M .$$
\vskip2mm
{\bf ii)} 
The map $L:AA(\R , L^p(0,1;X))\to S_{aa}^p(\R , X)$ defined by \eqref{eq41} 
is a left inverse of the Bochner transform $B$ from $S_{aa}^p(\R , X)$ into $AA(\R , L^p(0,1;X))$. Moreover 
$${\rm Im}(B):=B(S_{aa}^p(\R,X)) \text{ and } 
M=\set{V\in AA(\R , L^p(0,1;X))}{\forall n\in\Z ,\, V(n)=0}$$ are two closed subspaces of $AA(\R , L^p(0,1;X))$ and 
$$AA(\R , L^p(0,1;X)) = {\rm Im}(B) \oplus M .$$
\vskip2mm
{\bf iii)} 
The map $L:AP(\R , L^p(0,1;X))\to S_{ap}^p(\R , X)$ defined by \eqref{eq41} 
is a left inverse of the Bochner transform $B$ from $S_{ap}^p(\R , X)$ into $AP(\R , L^p(0,1;X))$. Moreover 
$${\rm Im}(B):=B(S_{ap}^p(\R,X)) \text{ and } 
M=\set{V\in AP(\R , L^p(0,1;X))}{\forall n\in\Z ,\, V(n)=0}$$ are two closed subspaces of $AP(\R , L^p(0,1;X))$ and 
$$AP(\R , L^p(0,1;X)) = {\rm Im}(B) \oplus M .$$
\end{theorem}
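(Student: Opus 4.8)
The plan is to notice that the map $L$ of \eqref{eq41} is nothing but $D^{-1}\circ R$, where $R$ is the restriction operator of \eqref{eq3} and $D^{-1}$ is the inverse of the discrete Bochner transform described in \eqref{eq40}; once this is observed, Proposition \ref{th1} does essentially all the work, and what remains is elementary Banach-space bookkeeping about bounded projections.

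First I would prove i). Since $R:BC(\R,L^p(0,1;X))\to\ell^{\infty}(\Z,L^p(0,1;X))$ is bounded linear and, by Proposition \ref{th1} i), $D^{-1}:\ell^{\infty}(\Z,L^p(0,1;X))\to BS^p(\R,X)$ is bounded linear with $D^{-1}U(t)=U([t])(\{t\})$, the composite $L:=D^{-1}\circ R$ is bounded linear, takes values in $BS^p(\R,X)$, and satisfies $LU(t)=U([t])(\{t\})$, which is \eqref{eq41}. Because $D=R\circ B$, we get $L\circ B=D^{-1}\circ R\circ B=D^{-1}\circ D=I_{BS^p(\R,X)}$, so $L$ is a left inverse of $B$. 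Then I would set $P:=B\circ L\in\mathcal{L}(BC(\R,L^p(0,1;X)))$; from $L\circ B=I$ it follows that $P^2=B\circ(L\circ B)\circ L=B\circ L=P$, so $P$ is a bounded idempotent, whence $\mathrm{Im}(P)=\ker(I-P)$ and $\ker(P)$ are closed and $BC(\R,L^p(0,1;X))=\mathrm{Im}(P)\oplus\ker(P)$. It remains to identify these two subspaces. The inclusion $\mathrm{Im}(P)\subset\mathrm{Im}(B)$ is clear, and if $w=Bu$ then $Pw=B(L\circ B)u=Bu=w$, so $\mathrm{Im}(P)=\mathrm{Im}(B)$. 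For the kernel, note $PV=(LV)^b$; evaluating at an integer, $(LV)^b(n)(\theta)=LV(n+\theta)=V(n)(\theta)$ for $\theta\in(0,1)$, so $(LV)^b(n)=V(n)$ in $L^p(0,1;X)$. Hence $PV=0$ forces $V(n)=0$ for all $n\in\Z$; conversely, if $V(n)=0$ in $L^p(0,1;X)$ for every $n$, then $LV(t)=V([t])(\{t\})=0$ for a.e.\ $t\in\R$, i.e.\ $LV=0$ in $BS^p(\R,X)$, so $PV=0$. Thus $\ker(P)=M$, which completes i). (One may also check directly that $M=\bigcap_{n\in\Z}\ker(\mathrm{ev}_n)$ is closed.)

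For ii) and iii) the argument is the same, once one records that $R$ and $D^{-1}$ restrict to the relevant subspaces. By the proof of Proposition \ref{th1} ii), the restriction operator $R$ maps $AA(\R,L^p(0,1;X))$ into $AA(\Z,L^p(0,1;X))$, and by Proposition \ref{th1} ii), $D^{-1}$ maps $AA(\Z,L^p(0,1;X))$ onto $S_{aa}^p(\R,X)$; so $L=D^{-1}\circ R$ is bounded linear from $AA(\R,L^p(0,1;X))$ into $S_{aa}^p(\R,X)$ with $L\circ B=I_{S_{aa}^p(\R,X)}$, and $P=B\circ L$ is a bounded idempotent on $AA(\R,L^p(0,1;X))$ with image $B(S_{aa}^p(\R,X))$ and kernel $\{V\in AA(\R,L^p(0,1;X)):V(n)=0\ \forall n\in\Z\}$ by the identical computations. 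This proves ii), and iii) follows verbatim, replacing $AA$ by $AP$ and $S_{aa}^p$ by $S_{ap}^p$ and using Proposition \ref{th1} iii) together with the fact, already invoked in its proof, that the restriction to $\Z$ of an almost periodic function is an almost periodic sequence.

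Since the nontrivial content is packaged in Proposition \ref{th1}, the only place requiring genuine care is the identification $\ker(B\circ L)=M$, where one must move carefully between the pointwise values of an $L^p(0,1;X)$-valued function on $\R$ and the a.e.\ equivalence class it represents in $BS^p(\R,X)$.
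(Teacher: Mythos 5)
Your proof is correct and follows essentially the same route as the paper: both identify $L=D^{-1}\circ R$ and let Proposition \ref{th1} carry the weight. The only cosmetic difference is that the paper invokes the abstract fact (Brezis, Theorem 2.13) that a left-invertible bounded operator has closed, complemented range with complement $\ker(L)=\ker(R)=M$, whereas you inline that argument by exhibiting the bounded idempotent $P=B\circ L$ and identifying its range and kernel directly — the two are the same computation.
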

%*******************************************

%*******************************************
\begin{proof}
\vskip 2 mm
{\bf i)} Consider the restriction operator
$R:BC(\R,L^p(0,1;X))\to \ell^{\infty}(\Z,L^p(0,1;X))$ defined by \eqref{eq3}
and 
the discrete Bochner transform $D:BS^p(\R,X)\to \ell^{\infty}(\Z,L^p(0,1;X))$ defined by \eqref{eq2}.
Obviously $R$ is bounded linear operator and from
Proposition \ref{th1}, $D$ is  an homeomorphism with
$D^{-1}V(t) = V([t])(\{t\})$  for  $t\in\R$. Then 
\begin{equation*}
L=D^{-1} \circ R :BC(\R,L^p(0,1;X))\to BS^p(\R,X)
\end{equation*}
 %%%%%%%%%%%%%%%%%%%%%%%%%
 $$
\xymatrix{
BC(\R,L^p(0,1;X)) \ar[r]^{R} \ar[rd]_{L=D^{-1} \circ R} & \ell^{\infty}(\Z,L^p(0,1;X))   \\
 & BS^p(\R,X)\ar[u]^{D}    \\
}
$$
%%%%%%%%%%%%%%%%%%%%%%%%%
is a bounded linear operator and $LU(t) = U([t])(\{t\})$ for $t\in\R$. We have $Lu^b(t) = u(t)$ for $u \in BS^p(\R,X)$, then $\ds L\circ B = I_{BS^p(\R,X)}$, which means that $L$
is a left inverse of the Bochner transform $B$ from $BS^p(\R , X)$ into $BC(\R , L^p(0,1;X))$. Then ${\rm Im}(B)$ is closed and
$$BC(\R , L^p(0,1;X)) = {\rm Im}(B) \oplus \ker(L) ,$$
since $L$ is a left inverse of $B$ 
\cite[Theorem 2.13]{Br}. 
We have also $\ker(L) = \ker(R)$ since $L=D^{-1} \circ R$, then
$$BC(\R , L^p(0,1;X)) = {\rm Im}(B) \oplus \set{V\in BC(\R , L^p(0,1;X))}{\forall n\in\Z ,\, V(n)=0} .$$
\vskip 2 mm
{\bf ii)} and {\bf iii)} The proof of ii) (resp. iii)) is similar to i) by considering
the restriction operator
$R:AA(\R,L^p(0,1;X))\to AA(\Z,L^p(0,1;X))$
(resp. $R:AP(\R,L^p(0,1;X))\to AP(\Z,L^p(0,1;X))$) defined by $RU(n) = U(n)$
and 
the discrete Bochner transform $D:S_{aa}^p(\R,X)\to AA(\Z,L^p(0,1;X))$ (resp. $D:S_{ap}^p(\R,X)\to AP(\Z,L^p(0,1;X))$) defined by $Du(n)=u^b(n)$. Then we set $L=D^{-1} \circ R$.
\end{proof}
 %******************************************

%%%%%%%%%%%%%%%%%%%%%%%%%%%
\section{Characterization of the continuity of Nemytskii operators}
\label{iv}
%%%%%%%%%%%%%%%%%%%%%%%%%%%
%Section 4

In this section we give necessary and sufficient conditions to obtain the continuity of Nemytskii operators between almost periodic and almost automorphic spaces in the sense of Stepanov. Then we apply these results to generalize some known results. 
\vskip 2 mm
Both $X$ and $Y$ are Banach spaces. Let $p$ and $q$ be two real numbers in $[1,+\infty)$.
From a map $f:\R\times X\to Y$ we consider the Nemytskii operators of $f$ in the Stepanov almost automorphic case
\begin{equation*}
\mathcal{N}_{f}:S_{aa}^p(\R,X)\to S_{aa}^q(\R,Y)
\end{equation*}
and in the Stepanov almost periodic case
\begin{equation*}
\mathcal{N}_{f}:S_{ap}^p(\R,X)\to S_{ap}^q(\R,Y)
\end{equation*}
defined by 
\begin{equation}
\label{eq4}
\mathcal{N}_{f}(u)(t)=f(t,u(t)) \quad \text{  for } t\in\R .
\end{equation}
We denote by $L^p_1(\R,X)$ the space of $1$-periodic functions in $L_{loc}^p(\R,X)$ 
which is defined by
\begin{equation*}
L^p_1(\R,X)=\set{u\in L_{loc}^p(\R,X)}{u(t+1)=u(t)\text{ for a.e. }t\in\R} .
\end{equation*}
Remark that $L^p_1(\R,X)\subset S_{ap}^p(\R,X)\subset S_{aa}^p(\R,X)$ and for $u\in L^p_1(\R,X)$ we have, 
\begin{equation}
\label{eq44}
\norm{u}_{S^p} = \sup_{t\in\R}\left(\int_0^1\norm{u(t+\theta}^p\,d\theta\right)^{\frac{1}{p}} =\left(\int_0^1\norm{u(\theta}^p\,d\theta\right)^{\frac{1}{p}} ,
\end{equation}
since $u$ is $1$-periodic.

%*******************************************
\begin{theorem}
\label{th2}
%theorem 4.1
The following assertions are equivalent.
\vskip2mm
{\bf i)} The Nemytskii operator $\mathcal{N}_{f}$ defined by \eqref{eq4} maps $S_{aa}^p(\R,X)$ into $S_{aa}^q(\R,Y)$ and $\mathcal{N}_{f}$ is continuous.
\vskip2mm
{\bf ii)} The operator $G:L^p_1(\R,X)\to S_{aa}^q(\R,Y)$ defined by  $G(u)=\mathcal{N}_{f}(u)$ for $u\in L^p_1(\R,X)$
is well-defined and continuous.
\vskip2mm
{\bf iii)} For all $x\in X$, $f(\cdot,x)\in S_{aa}^q(\R,Y)$ and
the operator $H:L^p_1(\R,X)\to BS^q(\R,Y)$ defined by  $H(u)=\mathcal{N}_{f}(u)$ for $u\in L^p_1(\R,X)$
is well-defined and continuous.
\end{theorem}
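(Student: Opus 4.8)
The plan is to establish the cycle $\mathbf{i)}\Rightarrow\mathbf{ii)}\Rightarrow\mathbf{iii)}\Rightarrow\mathbf{i)}$, the first two implications being soft and $\mathbf{iii)}\Rightarrow\mathbf{i)}$ carrying all the weight. For $\mathbf{i)}\Rightarrow\mathbf{ii)}$ I would simply observe that $L^p_1(\R,X)\subset S_{aa}^p(\R,X)$, so that $G$ is the restriction of $\mathcal N_f$ to $L^p_1(\R,X)$ and hence well-defined and continuous. For $\mathbf{ii)}\Rightarrow\mathbf{iii)}$: a constant $x\in X$ lies in $L^p_1(\R,X)$ with $\norm{x}_{S^p}=\norm{x}$, so $f(\cdot,x)=G(x)\in S_{aa}^q(\R,Y)$; and $H$ is $G$ followed by the isometric inclusion $S_{aa}^q(\R,Y)\hookrightarrow BS^q(\R,Y)$, hence well-defined and continuous.

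For $\mathbf{iii)}\Rightarrow\mathbf{i)}$, write $\mathcal X=L^p(0,1;X)$ and $\mathcal Y=L^q(0,1;Y)$ and recall from Section \ref{ii} that $w\mapsto w^b$ is an isometry of $BS^r$ into $BC(\R,L^r(0,1;\cdot))$ carrying $S_{aa}^r$ into $AA(\R,L^r(0,1;\cdot))$, while a $1$-periodic function is isometrically identified with its restriction to $(0,1)$, so that $L^p_1(\R,X)\cong\mathcal X$. Under these identifications, together with Proposition \ref{th1}, hypothesis $\mathbf{iii)}$ says precisely that
$$\Theta:\mathcal X\to\ell^{\infty}(\Z,\mathcal Y),\qquad \Theta(v)=\big(\theta\mapsto f(n+\theta,v(\theta))\big)_{n\in\Z}$$
is well-defined and continuous, and that $\Theta(v)\in AA(\Z,\mathcal Y)$ whenever $v$ is a constant function. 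I would then introduce
$$g:\R\times\mathcal X\to\mathcal Y,\qquad g(t,v)(\theta)=f(t+\theta,v(\theta))\quad(\theta\in(0,1)),$$
whose relevance is that, once $g$ is known to take values in $\mathcal Y$, one has $\mathcal N_f u\in L^q_{loc}(\R,Y)$ with $(\mathcal N_f u)^b=\mathcal N_g(u^b)$ for every $u\in BS^p(\R,X)$. The goal becomes to prove $g\in AA_U(\R\times\mathcal X,\mathcal Y)$: granting this, Theorem \ref{th8} makes $\mathcal N_g:AA(\R,\mathcal X)\to AA(\R,\mathcal Y)$ well-defined and continuous, so for $u\in S_{aa}^p(\R,X)$ (hence $u^b\in AA(\R,\mathcal X)$) we get $(\mathcal N_f u)^b=\mathcal N_g(u^b)\in AA(\R,\mathcal Y)$, i.e. $\mathcal N_f u\in S_{aa}^q(\R,Y)$, and continuity of $\mathcal N_f$ then follows from the two isometries $\norm{u-u_0}_{S^p}=\norm{u^b-u_0^b}_{\infty}$ and $\norm{\mathcal N_f u-\mathcal N_f u_0}_{S^q}=\norm{\mathcal N_g(u^b)-\mathcal N_g(u_0^b)}_{\infty}$.

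The step I expect to be the real obstacle is verifying $g\in AA_U$, because $\mathbf{iii)}$ constrains the Nemytskii operator only on $1$-periodic functions and therefore only ``sees'' the slices $f(n+\theta,\cdot)$ with $n\in\Z$, while $AA_U$ demands genuinely real-variable information about $g$. I would split it in two. First, well-definedness of $g$ into $\mathcal Y$ and condition \eqref{eq60} for $g$: splitting $(0,1)$ at $1-\{t\}$ writes $g(t,\cdot)$ in terms of $g([t],\cdot)$ and $g([t]+1,\cdot)$ precomposed with the ``translate-and-truncate'' operators $v\mapsto v(\cdot-\{t\})$ on $\mathcal X$; since these operators do not increase the $\mathcal X$-norm and, over any compact set of $v$'s, their joint orbits are compact in $\mathcal X$, the condition \eqref{eq60} follows from continuity of $\Theta$ via the standard finite-cover upgrade of pointwise continuity to uniform continuity on compacta. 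Second, $g(\cdot,v)\in AA(\R,\mathcal Y)$ for every $v\in\mathcal X$: this holds for constant $v$ by hypothesis; for a simple function $v=\sum_{j=1}^m x_j\mathbf{1}_{A_j}$ one has $g(\cdot,v)=\sum_{j=1}^m M_j\circ(f(\cdot,x_j))^b$, where $M_j$ is multiplication by $\mathbf{1}_{A_j}$ on $\mathcal Y$, a finite sum of continuous-linear images of almost automorphic functions and so itself almost automorphic; and for general $v\in\mathcal X$ one approximates by simple functions and uses the uniform-in-$t$ continuity of $g(t,\cdot)$ just obtained, together with the closedness of $AA(\R,\mathcal Y)$ in $BC(\R,\mathcal Y)$. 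Apart from this core, the argument is only bookkeeping with the isometries of Section \ref{ii}, Proposition \ref{th1}, and the single appeal to Theorem \ref{th8}.
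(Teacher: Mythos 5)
Your proposal is correct, and on the hard implication it takes a route that is genuinely different in its details from the paper's, so a comparison is worth recording. The paper closes the cycle as i) $\Rightarrow$ ii) $\Rightarrow$ iii) $\Rightarrow$ ii) $\Rightarrow$ i), so its heavy implication is ii) $\Rightarrow$ i); it lifts to the Bochner level via the function $F(t,\omega)(\theta)=f(t+\theta,\omega(\{t+\theta\}))$, for which $\underline{F}=B\circ G\circ J$ is continuous \emph{by hypothesis}, so that assertion iv) of Theorem \ref{th8} immediately yields the continuity of $\mathcal{F}$ on $AA(\R,L^p(0,1;X))$ with no separate verification of \eqref{eq60}; the price is that $\mathcal{F}(u^b)\neq(\mathcal{N}_fu)^b$, and the left inverse $L$ of Theorem \ref{th7} is needed to recover $\mathcal{N}_f=L\circ\mathcal{F}\circ B$. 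You instead prove iii) $\Rightarrow$ i) directly with the function $g(t,v)(\theta)=f(t+\theta,v(\theta))$, which enjoys the exact conjugation $(\mathcal{N}_fu)^b=\mathcal{N}_g(u^b)$ and so dispenses with Theorem \ref{th7} entirely; the price is that $g(\cdot,v)$ is not the Bochner transform of anything your hypothesis hands you, so you must verify both halves of the $AA_U$ definition by hand. Your verification of $g(\cdot,v)\in AA(\R,L^q(0,1;Y))$ via multiplication operators $M_j$ applied to $(f(\cdot,x_j))^b$ for simple $v$, then density and closedness of $AA$ in $BC$, is exactly the computation the paper performs in its iii) $\Rightarrow$ ii) step (its statement \eqref{eq8} is your $M_j$ observation, proved there by a small bootstrap through ii) $\Rightarrow$ i)). The genuinely new work in your write-up is the direct verification of \eqref{eq60} for $g$ by the translate-and-truncate decomposition of $(0,1)$ at $1-\{t\}$: this does go through --- the truncated translates are norm-nonincreasing, the orbit $\set{T_sv}{s\in[0,1],\,v\in\mathcal{K}}$ of a compact set is compact by continuity of translation in $L^p$, and uniform continuity of $\Theta$ on that orbit gives the uniform-in-$t$ estimate --- but it is precisely the step the paper's choice of $F$ is engineered to avoid. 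In short: your $g$ buys a cleaner algebra (no left inverse), the paper's $F$ buys a free equicontinuity condition; both arguments are sound, and both ultimately rest on Theorem \ref{th8}, Proposition \ref{th1}, and the simple-function density argument.
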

%*******************************************

%*******************************************
\begin{proof} 
{\bf i) $\Longrightarrow$ ii)} results of the fact that $G$ is the restriction of 
$\mathcal{N}_{f}$ to the subspace $L^p_1(\R,X)$.
\vskip2mm
{\bf ii) $\Longrightarrow$ i)}
Consider the isometry
\begin{equation}
\label{eq37}
J:L^p(0,1;X)\to L^p_1(\R,X) \text{ with } J\omega(t)=\omega(\{t\}) \text{ for } \omega\in L^p(0,1;X) \text{ and } t\in\R 
\end{equation}
and $B$ the Bochner transform between the spaces $S^q_{aa}(\R,Y)$ and $AA(\R,L^q(0,1;Y))$.
Then the map 
$$G_1=B\circ G \circ J: L^p(0,1;X)\to AA(\R,L^q(0,1;Y))$$
%\vskip2mm
 %%%%%%%%%%%%%%%%%%%%%%%%%
$$
\xymatrix{
L^p_{1}(\R,X) \ar[r]^{G} & S^q_{aa}(\R,Y) \ar[d]^{B}  \\
L^p(0,1;X)  \ar[u]^{J} \ar[r]^{G_1} & AA(\R,L^q(0,1;Y))}    \\
$$
%%%%%%%%%%%%%%%%%%%%%%%%%
%\vskip2mm
 is well-defined and continuous and 
\begin{equation}
\label{eq55}
G_1(\omega)(t)(\theta) = f(t+\theta,\omega(\{t+\theta\}))\quad\text{ for }\omega\in L^p(0,1;X), \,t\in\R \text{ and }\theta\in(0,1). 
 \end{equation}
Consider the following function 
 \begin{equation}
F:\R\times L^p(0,1;X) \to  L^q(0,1;Y) \text{ defined by } F(t,\omega) = G_1(\omega)(t) .
 \end{equation}
 The function $F$ is well-defined and 
$$\underline{F} : L^p(0,1;X)\to AA(\R,L^q(0,1;Y)) \text{ defined by }\underline{F}(\omega) = F(\cdot ,\omega)$$
 is well-defined and continuous since $\underline{F} = G_1$. 
From Theorem \ref{th8}, we can assert that
 the Nemytskii operator $\mathcal{F}$ built on $F$:
\begin{equation}
\label{eq54}
\mathcal{F}:AA(\R,L^p(0,1;X))\to AA(\R,L^q(0,1;Y)) \text{ with }\mathcal{F}(U)(t)=F(t,U(t)) ,
\end{equation}
 is well-defined and  continuous.
 Now we consider $B$ the Bochner transform between the spaces $S^p_{aa}(\R,X)$ and  $AA(\R,L^p(0,1;X))$ and $L$  the left inverse of the Bochner transform between the spaces 
 $AA(\R,L^q(0,1;Y))$ and $S^q_{aa}(\R,Y)$ (cf. Theorem \ref{th7}). 
Then $L \circ \mathcal{F} \circ B : S^p_{aa}(\R,X) \to S^q_{aa}(\R,Y)$
 is well-defined and continuous.
To conclude, we state 
 $$\mathcal{N}_{f} = L \circ \mathcal{F} \circ B :$$
%\vskip2mm
 %%%%%%%%%%%%%%%%%%%%%%%%%
 $$
\xymatrix{
AA(\R,L^p(0,1;X)) \ar[r]^{\mathcal{F}} & AA(\R,L^q(0,1;Y)) \ar[d]^{L}  \\
S^p_{aa}(\R,X)  \ar[u]^{B} \ar[r]^{\mathcal{N}_{f}} & S^q_{aa}(\R,Y)    \\
}
$$
%%%%%%%%%%%%%%%%%%%%%%%%%
\vskip2mm
For $u\in S^p_{aa}(\R,X)$ one has 
$(L \circ \mathcal{F} \circ B)(u) = (L \circ \mathcal{F})(u^b) = LV$ with  $V=\mathcal{F}(u^b)$.
From \eqref{eq55}-\eqref{eq54}, 
we obtain
$$V(t)(\theta) = \mathcal{F}(u^b)(t)(\theta) = f(t+\theta,u^b(t)(\{t+\theta\}))  = f(t+\theta,u(t+\{t+\theta\})) ,$$
then for $n\in\Z$  we have
$\ds V(n)(\theta)  = f(n+\theta,u(n+\theta))$.

We deduce that 
$\ds LV(t) = V([t])(\{t\}) = f([t]+\{t\},u([t]+\{t\})) = f(t,u(t))$,
then
$\ds (L \circ \mathcal{F} \circ B)(u) (t) = f(t,u(t))$.
 In conclusion $\mathcal{N}_{f} = L \circ \mathcal{F} \circ B$ is is well-defined and continuous.
\vskip2mm
{\bf ii) $\Longrightarrow$ iii)}
The operator $H$ is well-defined and continuous, since $S_{aa}^q(\R,Y)$ is topologically included in $BS^q(\R,Y)$.  
We consider the constant function $u_x:\R\to X$ defined by $u_x(t)=x$ for all $t\in\R$. We have $u_x\in L_{1}^p(\R,X)$,  then $G(u_x) = \mathcal{N}_{f}(u_x)=f(\cdot,x)\in S_{aa}^q(\R,Y)$ for each $x\in X$.
\vskip2mm
{\bf iii) $\Longrightarrow$ ii)} Let $A\subset (0,1)$ be a Lebesgue measurable set  and $\chi_{A}$ its characteristic function. 
Before  we state
\begin{equation}
\label{eq8}
\text{ If } u\in S^q_{aa}(\R,Y) , \text{ then } [t\mapsto u(t)\chi_{A}(\{t\})]\in S^q_{aa}(\R,Y) .
\end{equation}
Let $f :\R\times Y\to Y$ be the function   defined by $f(t,y)=y\chi_{A}(\{t\})$. 
To prove \eqref{eq8}, it suffices to state that the Nemytskii operator $\mathcal{N}_{f}$ defined by $\mathcal{N}_{f}(u)(t) = f(t,u(t)) = u(t)\chi_{A}(\{t\})$
maps $S_{aa}^q(\R,Y)$ into $S_{aa}^q(\R,Y)$.
For $u\in L^p_1(\R,X)$, we obtain that $\mathcal{N}_{f}(u)$  
belongs to $L^p_1(\R,X)$, then  the map $G:L^p_1(\R,X)\to S_{aa}^q(\R,Y)$ with  $G(u)(t)=f(t,u(t)) = u(t)\chi_{A}(\{t\})$ 
is well-defined. Moreover the linear map $G$ is bounded, then $G$ is continuous. From ii) $\Longrightarrow$ i) we obtain that the Nemytskii operator $\mathcal{N}_{f}$ maps $S_{aa}^q(\R,Y)$ into $S_{aa}^q(\R,Y)$.
\vskip 2 mm
To state iii) $\Longrightarrow$ ii), it suffices to state that the range of the function $H$  is in $S^q_{aa}(\R,Y)$, that is
$H(L_1^p(\R,X))\subset S^q_{aa}(\R,Y)$.
Consider the surjective isometry defined by \eqref{eq37}.
Let us denote by $\mathcal{E}(0,1;X)$ the set of simple functions from $(0,1)$ to $X$.  Fix $u\in J(\mathcal{E}(0,1;X))$.
There exists $\omega\in\mathcal{E}(0,1;X)$ such that $u=J\omega$.
The function $\omega$ can be writing as $\ds \omega(\theta)=\sum_{j=1}^{N}x_j\chi_{A_j}(\theta)$ for $\theta\in (0,1)$, where $x_1,\cdots,x_N\in X$ and $\{A_1,\cdots,A_N\}$  is a partition of Lebesgue measurable sets of $(0,1)$.
Then $\ds u(t)=\sum_{j=1}^{N}x_j\chi_{A_j}(\{t\})$ and
it follows 
$$H(u)(t) = f(t,u(t))  = f(t,\sum_{j=1}^{N}x_j\chi_{A_j}(\{t\}))= \sum_{j=1}^{N}f(t,x_j)\chi_{A_j}(\{t\}) .$$
From \eqref{eq8}, we obtain that $f(\cdot,x_j)\chi_{A_j}(\{\cdot\})\in S^q_{aa}(\R,Y)$, since $f(\cdot,x_j)\in S^q_{aa}(\R,Y)$.  Then $H(u)\in S^q_{aa}(\R,Y)$ as finite sum of functions of $S^q_{aa}(\R,Y)$.
We have proved that 
\begin{equation}
\label{eq47}
H(J(\mathcal{E}(0,1;X)))\subset S^q_{aa}(\R,Y) .
\end{equation}
The isometry $J$ is surjective and $\mathcal{E}(0,1;X)$ is dense in $L^p(0,1;X)$, then $J(\mathcal{E}(0,1;X))$ is dense in $L^p_1(\R,X)$. Moreover $H$ is continuous and $S^q_{aa}(\R,Y)$ is closed in $BS^q(\R,Y)$, then from \eqref{eq47}, we deduce that:
$H(L^p_1(\R,X))\subset S^q_{aa}(\R,Y)$.
 \end{proof}
%*****************************************

In the proof of Theorem \ref{th2}, by replacing the different almost automorphic spaces  by the corresponding the almost periodic spaces; and by using Theorem \ref{th9} instead Theorem \ref{th8}, we obtain the following result.

%*******************************************
\begin{theorem}
\label{th3}
%theorem 4.2
The following assertions  are equivalent.
\vskip2mm
{\bf i)} The Nemytskii operator $\mathcal{N}_{f}$ defined by \eqref{eq4} maps $S_{ap}^p(\R,X)$ into $S_{ap}^q(\R,Y)$ and $\mathcal{N}_{f}$ is continuous.
\vskip2mm
{\bf ii)} The operator $G:L^p_1(\R,X)\to S_{ap}^q(\R,Y)$ defined by  $G(u)=\mathcal{N}_{f}(u)$ for $u\in L^p_1(\R,X)$
is well-defined and continuous.
\vskip2mm
{\bf iii)} For all $x\in X$, $f(\cdot,x)\in S_{ap}^q(\R,Y)$ and
the operator $H:L^p_1(\R,X)\to BS^q(\R,Y)$ defined by  $H(u)=\mathcal{N}_{f}(u)$ for $u\in L^p_1(\R,X)$
is well-defined and continuous.
\end{theorem}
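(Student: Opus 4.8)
The plan is to transcribe the proof of Theorem \ref{th2} verbatim, replacing every occurrence of an almost automorphic space by the corresponding almost periodic space, and invoking the almost periodic versions of the structural tools already established: Proposition \ref{th1}~iii) (the discrete Bochner transform $D$ is a homeomorphism of $S_{ap}^p(\R,X)$ onto $AP(\Z,L^p(0,1;X))$), Theorem \ref{th7}~iii) (the map $L$ of \eqref{eq41} is a left inverse of the Bochner transform between $S_{ap}^p$ and $AP(\R,L^p)$ with ${\rm Im}(B)$ closed and complemented), and Theorem \ref{th9} (the characterization of continuity of Nemytskii operators in the Bohr almost periodic case) in place of Theorem \ref{th8}. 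I would prove the cycle \textbf{i) $\Rightarrow$ ii) $\Rightarrow$ i)} and \textbf{ii) $\Rightarrow$ iii) $\Rightarrow$ ii)}, exactly as for Theorem \ref{th2}.

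For \textbf{i) $\Rightarrow$ ii)} one observes that $G$ is the restriction of $\mathcal{N}_f$ to the subspace $L_1^p(\R,X)$. For \textbf{ii) $\Rightarrow$ i)} I would introduce the isometry $J$ of \eqref{eq37}, set $G_1 = B\circ G\circ J : L^p(0,1;X)\to AP(\R,L^q(0,1;Y))$ with $B$ the Bochner transform between $S_{ap}^q(\R,Y)$ and $AP(\R,L^q(0,1;Y))$, and define $F(t,\omega)=G_1(\omega)(t)$; since $\underline{F}=G_1$ is well-defined and continuous, Theorem \ref{th9} (iv $\Rightarrow$ i) gives that the Nemytskii operator $\mathcal{F}$ built on $F$ maps $AP(\R,L^p(0,1;X))$ into $AP(\R,L^q(0,1;Y))$ and is continuous. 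Then, taking $L$ to be the left inverse of the Bochner transform from Theorem \ref{th7}~iii), the same pointwise computation as in the proof of Theorem \ref{th2} shows $\mathcal{N}_f = L\circ\mathcal{F}\circ B$ on $S_{ap}^p(\R,X)$, hence $\mathcal{N}_f$ is well-defined and continuous.

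For \textbf{ii) $\Rightarrow$ iii)} the operator $H$ is well-defined and continuous because $S_{ap}^q(\R,Y)$ is topologically included in $BS^q(\R,Y)$, and evaluating $G$ at the constant function $u_x\in L_1^p(\R,X)$ yields $f(\cdot,x)\in S_{ap}^q(\R,Y)$. For \textbf{iii) $\Rightarrow$ ii)} the key preliminary step is the almost periodic analogue of \eqref{eq8}: if $u\in S_{ap}^q(\R,Y)$ and $A\subset(0,1)$ is measurable, then $[t\mapsto u(t)\chi_A(\{t\})]\in S_{ap}^q(\R,Y)$; this is obtained by applying the already-proved implication ii) $\Rightarrow$ i) to $f(t,y)=y\chi_A(\{t\})$, for which the corresponding $G$ is bounded linear on $L_1^p$. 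Then on the dense subset $J(\mathcal{E}(0,1;X))$ of $L_1^p(\R,X)$, a simple function $u=\sum_{j=1}^N x_j\chi_{A_j}(\{\cdot\})$ gives $H(u)=\sum_{j=1}^N f(\cdot,x_j)\chi_{A_j}(\{\cdot\})\in S_{ap}^q(\R,Y)$; continuity of $H$ together with closedness of $S_{ap}^q(\R,Y)$ in $BS^q(\R,Y)$ then yields $H(L_1^p(\R,X))\subset S_{ap}^q(\R,Y)$, so $G=H$ takes values in $S_{ap}^q(\R,Y)$ and is continuous.

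I do not expect a genuine obstacle here: every ingredient used in the almost automorphic case has an exact almost periodic counterpart already supplied in Sections \ref{iii}--\ref{iv} (most notably Proposition \ref{th1}~iii), whose own delicate proof via Lemma \ref{lem4} is already carried out, and Theorem \ref{th7}~iii)). The only point demanding care is bookkeeping — verifying that each appeal to Theorem \ref{th8} in the proof of Theorem \ref{th2} is legitimately replaced by Theorem \ref{th9}, and that the left inverse $L$ and the isometry $J$ restrict/corestrict correctly to the almost periodic subspaces — which they do since $L_1^p(\R,X)\subset S_{ap}^p(\R,X)$ and $B$, $L$ respect the almost periodic structure.
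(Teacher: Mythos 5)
Your proposal is correct and is exactly the paper's argument: the paper proves Theorem \ref{th3} by declaring that one repeats the proof of Theorem \ref{th2} with each almost automorphic space replaced by its almost periodic counterpart and with Theorem \ref{th9} invoked in place of Theorem \ref{th8}, relying on Proposition \ref{th1}~iii) and Theorem \ref{th7}~iii) just as you describe. Your write-up merely makes explicit the substitutions the paper leaves implicit.
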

%*******************************************

Now we give an application of Theorem \ref{th2} and \ref{th3}, which permits to generalize some known results.

%*******************************************
\begin{corollary}
%corollary 4.3
\label{cor8}
Assume that $\ds\frac{1}{q}=\frac{1}{p}+\frac{1}{r}$ for $q$, $p$ and $r\geq1$. Suppose that there exists $L \in BS^r(\R)$ such that
\begin{equation}
\label{eq49}
\norm{f(t,x_1)-f(t,x_2)}\leq L(t) \norm{x_1-x_2}
\end{equation}
 for all $x_1$, $x_2\in X$ and a.e.  $t\in\R$.  Then the following assertions hold.
\vskip 2 mm
{\bf i)} If $f(\cdot,x)\in S_{aa}^q(\R,Y)$ for all $x\in X$, then $\mathcal{N}_{f}$ maps $S_{aa}^p(\R,X)$ into $S_{aa}^q(\R,Y)$ and $\mathcal{N}_{f}$  is continuous.
\vskip 2 mm
{\bf ii)} If $f(\cdot,x)\in S_{ap}^q(\R,Y)$ for all $x\in X$, then $\mathcal{N}_{f}$ maps $S_{ap}^p(\R,X)$ into $S_{ap}^q(\R,Y)$ and $\mathcal{N}_{f}$ is continuous.
\end{corollary}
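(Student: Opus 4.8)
The plan is to read off Corollary~\ref{cor8} from Theorem~\ref{th2} (for part~i)) and Theorem~\ref{th3} (for part~ii)), by verifying assertion~iii) of those theorems; the verification is word-for-word the same in both cases, the only inputs being the Lipschitz bound \eqref{eq49}, the relation $\frac1q=\frac1p+\frac1r$, and the hypothesis on $f(\cdot,x)$. Indeed, "$f(\cdot,x)\in S_{aa}^q(\R,Y)$ for all $x\in X$" (resp. "$f(\cdot,x)\in S_{ap}^q(\R,Y)$") is exactly the first half of assertion~iii), so what remains is to show that the operator $H:L^p_1(\R,X)\to BS^q(\R,Y)$, $H(u)=\mathcal{N}_f(u)$, is well-defined and continuous. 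Recall that $L^p_1(\R,X)\subset BS^p(\R,X)$ and, by \eqref{eq44}, $\norm{u}_{S^p}=\norm{u}_{L^p(0,1;X)}$ for $u\in L^p_1(\R,X)$.

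For well-definedness I would first note that $f$ is a Carath\'eodory function: for a.e. $t$ (those with $L(t)<+\infty$, which is a.e. since $L\in BS^r(\R)\subset L^r_{loc}(\R)$) the bound \eqref{eq49} makes $f(t,\cdot)$ Lipschitz hence continuous, while $f(\cdot,x)\in S^q_{aa}(\R,Y)\subset L^q_{loc}(\R,Y)$ is measurable for each $x$; therefore $t\mapsto f(t,u(t))$ is measurable for every $u\in L^p_{loc}(\R,X)$. Then, writing $\norm{f(t,u(t))}\leq\norm{f(t,u(t))-f(t,0)}+\norm{f(t,0)}\leq L(t)\norm{u(t)}+\norm{f(t,0)}$ a.e., with $f(\cdot,0)\in S^q_{aa}(\R,Y)\subset BS^q(\R,Y)$, and applying the generalized H\"older inequality on each interval $(t,t+1)$ with exponents $p$, $r$, $q$, I get
\begin{equation*}
\left(\int_0^1\bigl(L(t+\theta)\,\norm{u(t+\theta)}\bigr)^q\,d\theta\right)^{\frac1q}\leq\norm{L}_{S^r}\,\norm{u}_{S^p},
\end{equation*}
uniformly in $t$. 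Hence $\mathcal{N}_f(u)\in BS^q(\R,Y)$ with $\norm{\mathcal{N}_f(u)}_{S^q}\leq\norm{L}_{S^r}\norm{u}_{S^p}+\norm{f(\cdot,0)}_{S^q}$, so $H$ is well-defined.

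For continuity, given $u_1,u_2\in L^p_1(\R,X)$ the bound \eqref{eq49} gives $\norm{f(t,u_1(t))-f(t,u_2(t))}\leq L(t)\norm{u_1(t)-u_2(t)}$ a.e., and the same H\"older estimate yields $\norm{H(u_1)-H(u_2)}_{S^q}\leq\norm{L}_{S^r}\,\norm{u_1-u_2}_{S^p}$, so $H$ is in fact Lipschitz continuous. Invoking Theorem~\ref{th2} (iii)~$\Rightarrow$~i)) then proves part~i), and Theorem~\ref{th3} (iii)~$\Rightarrow$~i)) proves part~ii). I do not anticipate a genuine obstacle here: the only point requiring a moment's care is that the H\"older bound is uniform in $t$ so that it controls the supremum defining $\norm{\cdot}_{S^q}$, but this is immediate since the majorant $\norm{L}_{S^r}\norm{u}_{S^p}$ is independent of $t$; the corollary is essentially the combination of \eqref{eq49} and H\"older repackaged through Theorems~\ref{th2}--\ref{th3}.
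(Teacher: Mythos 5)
Your proposal is correct and follows essentially the same route as the paper: both verify assertion iii) of Theorems \ref{th2} and \ref{th3} by showing that \eqref{eq49} plus the exponent relation $\frac1q=\frac1p+\frac1r$ make $H$ (indeed, $\mathcal{N}_f$ on all of $BS^p(\R,X)$) well-defined and Lipschitz with constant $\norm{L}_{S^r}$ via H\"older on unit intervals. The only cosmetic difference is that the paper states the estimate for general $u\in BS^p(\R,X)$ before restricting to $L^p_1(\R,X)$, whereas you work on $L^p_1(\R,X)$ directly.
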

%*******************************************

%*******************************************
\begin{remark}
%remark 4.6
In the framework of metric spaces, Bedouhene et al. have shown that $\mathcal{N}_{f}$ maps $S_{ap}^p(\R,X)$ into $S_{ap}^q(\R,X)$ with assumption \eqref{eq49} when $L \in S_{ap}^r(\R)$  in \cite[Theorem 2.11]{Be-Ch-Me-Ra-Sm}. 
Recall that before this last result of Bedouhene et al.,  similar results assume 
the additional compactness condition {\bf (C1)}: there exists a compact set $K\subset X$ such that $u(t)\in K$ for a.e. $t\in \R$.
More precisely in the almost periodic case, Long et al. has stated the following result in \cite[Theorem 2.2]{Lo-Di}: let $f$ be Stepanov almost periodic in $t\in\R$ uniformly for $x\in X$,
instead of $f(\cdot,x)\in S_{aa}^q(\R,Y)$ for all $x\in X$.
Under assumption \eqref{eq49},
if $u\in S_{ap}^p(\R,X)$ and $u$ satisfies the condition {\bf (C1)}, then $\mathcal{N}_{f}(u)\in S_{ap}^q(\R,X)$. 
For a Stepanov almost automorphic function  in $t\in\R$ uniformly for $x\in X$, a similar result is given by Ding Long et al. in  \cite[Theorem 2.4]{Di-Li-Xi}.
\end{remark} 
%*******************************************

%*******************************************
\begin{proof}
We proof i) and ii) together. We fix  $u\in BS^p(\R,X)$. The function $f(\cdot,u(\cdot))$  is strongly measurable on each bounded interval of $\R$ since $f(\cdot,x)\in L^q_{loc}(\R,Y)$ for all $x\in X$ and $f(t,\cdot)$ is continuous for a.e. $t\in \R$.
From \eqref{eq49}, we obtain
$$\norm{f(t,x)}\leq L(t) \norm{x}  + \norm{f(t,0)} .$$
From Minkowski's inequality and Holder's inequality with $\frac{r}{q}$ and $\frac{p}{q}$ as exponents, it follows  
$$\sup_{t\in\R}\left(\int^{t+1}_{t}\norm{f(s,u(s))}^q\,ds\right)^{\frac{1}{q}} \leq  \norm{L}_{S^r}  \norm{u}_{S^p} +  \norm{f(\cdot,0)}_{S^q} +   <+\infty ,$$
then the Nemytskii operator $\ds\mathcal{N}_{\phi}$ defined by $\mathcal{N}_{\phi}(u)=\phi(\cdot,u(\cdot))$ maps $BS^p(\R,X)$ into $BS^q(\R,Y)$.
From \eqref{eq49}, we deduce that
the function $\mathcal{N}_{\phi}$ is Lipschitzian with constant $\norm{L}_{S^r}$.
Then  the restriction $H$ of $\mathcal{N}_{\phi}$ to $L_1^p(0,1;X)$,
$H:L_1^p(0,1;X)\to BS^q\R,Y)$ defined by $H(u)(t) = \phi(t,u(t))$
is well-defined and continuous. We conclude by using Theorem \ref{th2} for i) and  Theorem \ref{th3} for ii).
\end{proof}
%******************************************

%%%%%%%%%%%%%%%%%%%%%%%%%%%
\section{Some results  in Stepanov spaces}
\label{v}
%%%%%%%%%%%%%%%%%%%%%%%%%%%
%Section 5

In this section we generalize some known results of Danilov in \cite{Da, Da1} from Stepanov almost periodic to Stepanov almost automorphic  functions. 
Danilov's demonstrations are not adaptable to spaces of almost automorphic functions.
We give new proofs that deal with almost periodic and almost automorphic cases simultaneously.
Proposition \ref{prop3} and \ref{prop2} will be used in the following sections. 
\vskip 2 mm
Let $X$ be a Banach space and $1\leq p<+\infty$. 
\vskip 2 mm
Let us recall that a subset $\mathcal{H}$ of $L^p(0,1;X)$ is said to be {\it tight} if for every $\eps>0$,  there exists a compact  set $K\subset X$ such that for every   $\omega\in\mathcal{H}$ 
$$\rm{meas}\left(\set{\theta\in(0,1)}{\omega(\theta)\notin K}\right)<\eps  .$$
A subset $\mathcal{H}$ of $L^p(0,1;X)$ is said to be {\it $p$-uniformly integrable} if 
for every $\eps>0$,  there exists $\delta>0$ such that for every $\omega\in\mathcal{H}$ and  for every measurable set $A\subset (0,1)$ with  $\rm{meas}(A)\leq\delta$, we have 
$$\int_A\norm{\omega(\theta)}^p\,d\theta<\eps$$
\vskip 2 mm
A relatively compact subset  $\mathcal{H}$ of $L^p(0,1;X)$ is tight and $p$-uniformly integrable (see e.g. \cite[Corollary 3.3]{Di-Ma}).

%*******************************************
\begin{definition}
%definition 5.1
\label{def2}
\vskip 2mm
{\bf i)} 
A subset $\mathcal{H}$ of $L^p_{loc}(\R,X)$
 is said to be {\it Stepanov tight} if for every $\eps>0$  there exists a compact  set $K\subset X$ such that  
$$\forall u\in \mathcal{H} , \quad\sup_{t\in\R}\Big(\rm{meas}\left(\set{s\in(t,t+1)}{u(s)\notin K}\right)\Big)<\eps  .$$
\vskip 2mm
{\bf ii)} 
A function $u\in L^p_{loc}(\R,X)$  is said to be {\it Stepanov tight} if the set $\{u\}$ is Stepanov tight.
\end{definition}
%*******************************************

%*******************************************
\begin{definition}
%definition 5.2
\label{def1}
For $t\in\R$ and $\delta>0$, let us denote by $\mathcal{E}^t_{\delta}$, the class of measurable sets $E\subset (t,t+1)$ such that 
$\rm{meas}(E)\leq\delta$.
\vskip 2mm
{\bf i)} 
A subset $\mathcal{H}$ of $L^p_{loc}(\R,X)$
 is said to be {\it Stepanov $p$-uniformly integrable} if 
$$\lim_{\delta\to0} \left( \sup_{u\in \mathcal{H}}\sup_{t\in\R}\sup_{E\in\mathcal{E}^t_{\delta}}\int_{E}\norm{u(s)}^p\,ds \right) =0 .$$
\vskip 2mm
{\bf ii)} 
A function $u\in L^p_{loc}(\R,X)$  is said to be {\it Stepanov $p$-uniformly integrable} if the set $\{u\}$ is Stepanov $p$-uniformly integrable.
\end{definition}
%*******************************************

\vskip 2 mm
By using the Bochner transform the tightly and the $p$-uniformly integrability in the sense of Stepanov can be reduced to the classical notion of tightly and the $p$-uniformly integrability of 
a subset of  $L^p(0,1;X)$.

%*******************************************
\begin{lemma}
%lemma 5.3
\label{prop5}
Let $\mathcal{H}$ be a subset of $L^p_{loc}(\R,X)$. 
Denotes by $\mathcal{H}_b$ the following subset of $L^p(0,1;X)$, $\mathcal{H}_b = \set{u^b(t)}{ u\in\mathcal{H} \text{ and } t\in\R }$.
\vskip 2mm
{\bf i)} $\mathcal{H}$ is Stepanov tight if and only if $\mathcal{H}_b$  is tight in $L^p(0,1;X)$.
\vskip 2mm
{\bf ii)} $\mathcal{H}$  is Stepanov $p$-uniformly integrable if and only if $\mathcal{H}_b$  if $p$-uniformly integrable in $L^p(0,1;X)$.
\end{lemma}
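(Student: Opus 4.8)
The plan is to reduce both parts to a single change-of-variables observation and then merely unwind the definitions. For $u\in L_{loc}^p(\R,X)$ and $t\in\R$, the translation $\theta\mapsto t+\theta$ is a measure-preserving bijection of $(0,1)$ onto $(t,t+1)$, and by the definition of the Bochner transform it carries $u^b(t)$ onto the restriction $u|_{(t,t+1)}$. Hence for every measurable $A\subset(0,1)$, writing $E:=t+A\subset(t,t+1)$ (and conversely every measurable $E\subset(t,t+1)$ equals $t+A$ with $A:=E-t$), one has $\mathrm{meas}(E)=\mathrm{meas}(A)$, $\int_E\norm{u(s)}^p\,ds=\int_A\norm{u^b(t)(\theta)}^p\,d\theta$, and, for every fixed $K\subset X$, the set $\set{\theta\in(0,1)}{u^b(t)(\theta)\notin K}$ is the translate $-t+\set{s\in(t,t+1)}{u(s)\notin K}$, so the two have equal measure.

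For i): fix a compact $K\subset X$. Since each element of $\mathcal{H}_b$ is $u^b(t)$ for some $u\in\mathcal{H}$ and $t\in\R$, the observation above yields
$$\sup_{\omega\in\mathcal{H}_b}\mathrm{meas}\left(\set{\theta\in(0,1)}{\omega(\theta)\notin K}\right)=\sup_{u\in\mathcal{H}}\ \sup_{t\in\R}\ \mathrm{meas}\left(\set{s\in(t,t+1)}{u(s)\notin K}\right).$$
Thus, for a given $\eps>0$, a compact set $K$ witnesses the definition of a tight subset of $L^p(0,1;X)$ for $\mathcal{H}_b$ if and only if it witnesses Definition \ref{def2} for $\mathcal{H}$; reading both definitions off this identity proves i), the only care needed being a harmless passage from $\eps$ to $\eps/2$ to reconcile the strict and non-strict inequalities in the two definitions.

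For ii): the same substitution gives, for every $\delta>0$,
$$\sup_{u\in\mathcal{H}}\ \sup_{t\in\R}\ \sup_{E\in\mathcal{E}^t_{\delta}}\int_{E}\norm{u(s)}^p\,ds=\sup_{\omega\in\mathcal{H}_b}\ \sup_{\substack{A\subset(0,1)\\ \mathrm{meas}(A)\leq\delta}}\int_A\norm{\omega(\theta)}^p\,d\theta,$$
because $E\mapsto E-t$ is a measure-preserving bijection from $\mathcal{E}^t_{\delta}$ onto the family of measurable subsets of $(0,1)$ of measure $\leq\delta$ and preserves the integral of $\norm{u}^p=\norm{u^b(t)}^p$. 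By Definition \ref{def1}, the left-hand side tends to $0$ as $\delta\to0$ exactly when $\mathcal{H}$ is Stepanov $p$-uniformly integrable. The right-hand side is nondecreasing in $\delta$, so it tends to $0$ as $\delta\to0$ exactly when for every $\eps>0$ there is $\delta>0$ with $\int_A\norm{\omega(\theta)}^p\,d\theta<\eps$ for all $\omega\in\mathcal{H}_b$ and all measurable $A\subset(0,1)$ with $\mathrm{meas}(A)\leq\delta$, i.e.\ exactly when $\mathcal{H}_b$ is $p$-uniformly integrable. Equating the two sides gives ii).

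There is no genuinely hard step here: the whole content is the substitution $s=t+\theta$ together with the fact that it is measure preserving. The only points worth a line of justification are the translation of the $\lim_{\delta\to0}$ definition of $p$-uniform integrability (Definition \ref{def1}) into the classical $\eps$--$\delta$ form recalled for $L^p(0,1;X)$, which follows at once from monotonicity in $\delta$ of the supremum involved, and, in i), the bookkeeping of strict versus non-strict inequalities between the two tightness definitions, dispatched by the usual $\eps/2$ trick.
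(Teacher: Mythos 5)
Your proof is correct and follows essentially the same route as the paper's: both parts are reduced to the translation invariance of Lebesgue measure under $s=t+\theta$, giving the set identity for tightness and the identity $\int_E\norm{u(s)}^p\,ds=\int_{E-t}\norm{u^b(t)(\theta)}^p\,d\theta$ together with $\mathcal{E}^t_\delta=t+\mathcal{E}^0_\delta$ for uniform integrability. The extra care you take with the strict versus non-strict inequality (the $\eps/2$ remark) and with the monotonicity in $\delta$ is sound bookkeeping that the paper leaves implicit.
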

%*******************************************

%*******************************************
\begin{proof}
{\bf i)} We have $\set{s\in(t,t+1)}{u(s)\notin K} =t+\set{\theta\in(0,1)}{u^b(t)(\theta)\notin K}$ and by using the invariance by translation of the Lebesgue's measure on $\R$ we obtain
$$\rm{meas}\left(\set{s\in(t,t+1)}{u(s)\notin K}\right)=\rm{meas}\left(\set{\theta\in(0,1)}{u^b(t)(\theta)\notin K}\right) .$$
From this last equality, we deduce i).
\vskip 2 mm
{\bf ii)} For $E\in\mathcal{E}^t_{\delta}$, we have $\ds\int_{E}\norm{u(s)}^p\,ds = \int_{E-t}\norm{u^b(t)(\theta)}^p\,d\theta$ and by using $\mathcal{E}^t_{\delta} = t+\mathcal{E}^0_{\delta}
$, we obtain
$$\sup_{E\in\mathcal{E}^t_{\delta}}\int_{E}\norm{u(s)}^p\,ds = \sup_{E\in\mathcal{E}^0_{\delta}}\int_{E}\norm{u^b(t)(\theta)}^p\,d\theta .$$
From this last equality, we deduce ii).
\end{proof}
%*******************************************

%*******************************************
\begin{proposition}
%proposition 5.4
\label{prop3}
If $\mathcal{K}$ is a compact set in $S^p_{aa}(\R,X)$, then $\mathcal{K}$ is 
Stepanov tight and Stepanov $p$-uniformly integrable. 
\end{proposition}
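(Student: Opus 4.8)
The plan is to transport the statement through the Bochner transform to the classical setting of $L^p(0,1;X)$ and then invoke Lemma \ref{prop5} together with the fact, recalled just before Definition \ref{def2}, that a relatively compact subset of $L^p(0,1;X)$ is tight and $p$-uniformly integrable. Write $Z=L^p(0,1;X)$. Since, by Section \ref{ii}, the Bochner transform $B\colon S^p_{aa}(\R,X)\to AA(\R,Z)$ is an isometric embedding, the set $\mathcal{C}:=B(\mathcal{K})$ is compact in $AA(\R,Z)$; in particular each element of $\mathcal{C}$ is an almost automorphic function $\R\to Z$. With the notation of Lemma \ref{prop5} applied to $\mathcal{H}=\mathcal{K}$, one has $\mathcal{K}_b=\set{u^b(t)}{u\in\mathcal{K},\ t\in\R}=\bigcup_{v\in\mathcal{C}}v(\R)$, so everything reduces to showing that $\mathcal{K}_b$ is relatively compact in $Z$.

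To prove this I would argue sequentially. Let $(w_m)_m$ be a sequence in $\mathcal{K}_b$ and write $w_m=v_m(t_m)$ with $v_m\in\mathcal{C}$ and $t_m\in\R$. By compactness of $\mathcal{C}$ we may pass to a subsequence so that $v_m\to v$ in $AA(\R,Z)$, i.e. $\norm{v_m-v}_{\infty}\to0$, for some $v\in\mathcal{C}$. Applying the definition of almost automorphy of $v$ to the real sequence $(t_m)_m$ gives a further subsequence $(t_{m_k})_k$ and a function $g$ with $\lim_{k\to\infty}v(t+t_{m_k})=g(t)$ for every $t\in\R$; taking $t=0$ yields $v(t_{m_k})\to g(0)$ in $Z$. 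Then
\[
\norm{w_{m_k}-g(0)}_{L^p}\le\norm{v_{m_k}-v}_{\infty}+\norm{v(t_{m_k})-g(0)}_{L^p}\longrightarrow 0\qquad(k\to\infty),
\]
so $(w_m)$ has a subsequence converging in $Z$. As $Z$ is a Banach space, $\mathcal{K}_b$ is relatively compact in $L^p(0,1;X)$.

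To finish, the relatively compact set $\mathcal{K}_b$ is tight and $p$-uniformly integrable by the criterion quoted before Definition \ref{def2}; Lemma \ref{prop5} i) then gives that $\mathcal{K}$ is Stepanov tight, and Lemma \ref{prop5} ii) that $\mathcal{K}$ is Stepanov $p$-uniformly integrable. I expect the one genuinely substantive step to be the relative compactness of $\mathcal{K}_b$: a single bounded continuous function may well have a non-relatively-compact range, so one must really use almost automorphy to compactify each $v(\R)$, and the uniform (sup-norm) convergence $v_m\to v$ furnished by compactness of $\mathcal{C}$ is exactly what allows the two limiting processes --- the one in $\mathcal{C}$ and the one coming from almost automorphy of $v$ --- to be combined.
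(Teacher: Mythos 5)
Your proposal is correct and follows essentially the same route as the paper: pass to the Bochner transform, show that $\mathcal{K}_b=\set{u^b(t)}{u\in\mathcal{K},\,t\in\R}$ is relatively compact in $L^p(0,1;X)$ (the paper isolates this as Lemma \ref{lem1} and proves it by exactly your two-step extraction plus triangle inequality, citing the relative compactness of the range of an almost automorphic function where you rederive it from the definition at $t=0$), and then conclude via the tight/$p$-uniformly integrable criterion for relatively compact sets and Lemma \ref{prop5}. No gaps.
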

%*******************************************

Before proving Proposition \ref{prop3}, we make the following remarks.

%*******************************************
\begin{remark}
%remark 5.5
\label{rq3}
{\bf i)} Proposition \ref{prop3} also holds  if $\mathcal{K}$ is a compact set in $S^p_{ap}(\R,X)$.
\vskip 2 mm
{\bf ii)} The assertion "$u$ is Stepanov tight when $u\in S^p_{ap}(\R,X)$"
is contained in a more general result of Danilov \cite[Theorem 3]{Da1}, which assert that if $u\in S^p_{ap}(\R,X)$, then there exist $v\in AP(\R,X)$ and a measurable set $N\subset \R$ such that $u(t)=v(t)$ for $t\notin N$ and
$\ds\sup_{t\in\R}\Big(\rm{meas}\left((t,t+1)\cap N\right)\Big)<\eps$.
\vskip 2 mm
{\bf iii)} The assertion "$u$ is Stepanov $p$-uniformly integrable when $u\in S^p_{ap}(\R,X)$"
 is contained in a more general result of Danilov \cite[page 1420]{Da}, which gives a characterization of the space $S^p_{ap}(\R,X)$ in term of the so-called {\it Stepanov almost periodicity in Lebesgue measure} and Stepanov $p$-uniformly integrability.
 This characterization is the following: $u\in S^p_{ap}(\R,X)$ if and only is $u$ is Stepanov $p$-uniformly integrable and $u$ Stepanov almost periodicity in Lebesgue measure, that is for any $\eps>0$ and $\delta>0$ the set
 $$\set{\tau\in\R}{\sup_{t \in \mathbb{R}}\rm{meas}\left( \set{s\in[t,t+1]}{\norm{(u (s + \tau) - u (s)}_{X}\geq  \eps}\right)<\delta}$$
 is relatively dense in $\R$. 
\end{remark}
%*******************************************

To prove  Proposition \ref{prop3} we use the following lemma.

%*******************************************
\begin{lemma}
%lemma 5.6
\label{lem1}
Let $E$ be a Banach space. If $\ds\mathcal{K}$ is a compact subset of $AA(\R,E)$, then $\ds S=\set{U(t)}{U\in\mathcal{K}\text{ and }t\in\R}$ is a relatively compact in $E$. 
\end{lemma}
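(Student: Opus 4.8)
The plan is to deduce this from the fact that $AA(\R,E)$ embeds continuously into $BC(\R,E)$, so a compact set $\mathcal{K}\subset AA(\R,E)$ is also compact in $BC(\R,E)$, and then to reduce the statement to a standard fact about equicontinuous (or at least uniformly bounded, pointwise-relatively-compact) families of continuous functions. First I would observe that $\mathcal{K}$, being compact in $AA(\R,E)\subset BC(\R,E)$, is in particular norm-bounded, so $S$ is a bounded subset of $E$; but boundedness alone is not enough in an infinite-dimensional $E$, so the real work is to extract relative compactness.

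The key step is the following. Fix $\eps>0$. Since $\mathcal{K}$ is compact in $BC(\R,E)$, it is totally bounded, so there exist finitely many functions $U_1,\dots,U_m\in\mathcal{K}$ such that every $U\in\mathcal{K}$ satisfies $\norm{U-U_j}_\infty\leq\eps$ for some $j$. Hence for every $U\in\mathcal{K}$ and every $t\in\R$, the point $U(t)$ lies within distance $\eps$ of the set $\bigcup_{j=1}^m\overline{\set{U_j(t)}{t\in\R}}$. Thus it suffices to show that for each fixed $j$ the set $S_j=\overline{\set{U_j(t)}{t\in\R}}$ is compact in $E$; for then $\bigcup_{j=1}^m S_j$ is a compact set whose $\eps$-neighbourhood contains $S$, and since $\eps>0$ was arbitrary, $S$ is totally bounded, hence relatively compact (as $E$ is complete). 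This reduces everything to the single-function case: the range of an almost automorphic function $u:\R\to E$ is relatively compact in $E$.

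That single-function fact is itself standard and follows directly from the definition of almost automorphy recalled in the excerpt: given any sequence $(u(t_n'))_n$ in the range, pick a sequence of reals $(t_n')$, apply the definition to obtain a subsequence $(t_k)$ and a limit function $v$ with $\lim_k u(t+t_k)=v(t)$ for all $t$; evaluating at $t=0$ gives $u(t_k)\to v(0)$, so every sequence in the range of $u$ has a convergent subsequence in $\overline{\mathrm{range}(u)}$, i.e. the range is relatively compact. (One can cite N'Gu\'er\'ekata's book for this, since the excerpt refers to it for preliminary results on almost automorphic functions.) Assembling the pieces: $S$ is covered up to $\eps$ by the compact set $\bigcup_{j=1}^m S_j$ for every $\eps$, hence $S$ is relatively compact in $E$.

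The main obstacle, and the only place requiring care, is the interplay of the two "directions" of compactness: one must not confuse compactness of $\mathcal{K}$ as a subset of a function space with compactness of the pointwise ranges. The device that bridges them is total boundedness of $\mathcal{K}$ in the \emph{uniform} norm (which is legitimate because $AA(\R,E)$ carries the sup-norm and embeds in $BC(\R,E)$), combined with relative compactness of the range of each \emph{individual} almost automorphic function. Everything else is a routine $\eps$-net argument together with completeness of $E$.
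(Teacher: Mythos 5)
Your proof is correct and follows essentially the same route as the paper's: both reduce the claim to the relative compactness of the range of a single almost automorphic function, using compactness of $\mathcal{K}$ in the sup-norm to control the dependence on $U$. The only difference is cosmetic --- the paper phrases the reduction sequentially (extract a cluster point $U$ of a sequence $(U_k)_{k\in\N}$ in $\mathcal{K}$ and conclude by the triangle inequality), whereas you phrase it through a finite $\eps$-net and total boundedness.
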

%*******************************************

%*******************************************
\begin{remark}
%remark 5.7
In \cite[Lemma 3.6]{Bl-Ci-Ng-Pe}, Blot et al.,  Lemma \ref{lem1} is stated in the case of almost periodic spaces in the sense of Bohr.
\end{remark}
%*******************************************

%*******************************************
\begin{proof}
It suffices to prove that all sequence $(U_k(t_k))_{k\in\N}$ as at least a cluster point $x$ of $E$, where $U_k\in\mathcal{K}$ and $t_k\in\R$. The sequence $(U_k)_{k\in\N}$ as at least a cluster point $U\in\mathcal{K}$, since $\mathcal{K}$ is a compact subset of $AA(\R,E)$. The sequence $(U(t_k)_{k\in\N}$ as at least a cluster point $x\in E$, since the range of an almost automorphic function is relatively compact \cite[Theorem 2.3, p. 11]{NG}.
Consequently, there exists a common subsequence of $(U_k)_{k\in\N}$ and $(t_k)_{k\in\N}$  that we note in the same way such that
$$
\lim_{k\to+\infty}\sup_{t\in\R}\norm{U_k(t)-U(t)}_E=0 \quad\text{and}\quad \lim_{k\to+\infty}\norm{U(t_k)-x}_E=0 .
$$ 
To ends, we use the following inequality
$$\norm{U_k(t_k)-x}_E \leq \norm{U_k(t_k)-U(t_k)}_E + \norm{U(t_k)-x}_E .$$
\end{proof}
%*******************************************

%*******************************************
\vskip    2mm \noindent\textit{Proof of Proposition \ref{prop3}.}
The range $\set{u^b}{u\in\mathcal{K}}$ of the compact $\mathcal{K}$
by the Bochner transform is a compact subset of $AA(\R,L^p(0;1;X)))$, since the Bochner transform is an isometry from $S^p_{aa}(\R,X)$ into $AA(\R,L^p(0;1;X)))$. 
Denote by $\ds\mathcal{K}_b$ the following subset of $L^p(0;1;X))$
\begin{equation*}
\mathcal{K}_b = \set{u^b(t)}{u\in\mathcal{K} \text{ and } t\in\R} .
\end{equation*}
By help of Lemma \ref{lem1}, we deduce that $\ds\mathcal{K}_b$ is a relatively compact in $L^p(0;1;X))$.
It follows that $\mathcal{K}_b$ is tight and $p$-uniformly integrable \cite[Corollary 3.3]{Di-Ma}.
The conclusion results of Lemma \ref{prop5}
%\end{proof}
\hfill$\Box$ 
%*******************************************

%*******************************************
\begin{proposition}
%proposition 5.8
\label{prop2}
Assume  that $u$ and $u_k\in BS^p(\R,X)$ for all $k\in\N$. If 
$\ds\set{u_k}{k\in\N}$ is  Stepanov $p$-uniformly integrable,
then $\ds \lim_{k\to+\infty}\norm{u_k-u}_{S^p}=0$ if and only if 
\begin{equation}
\label{eq13}
\forall\eps>0, \qquad \lim_{k\to\infty}\sup_{t\in\R}\Big(
\rm{meas}\left( \set{s\in (t,t+1)}{\norm{u_k(s)-u(s)}\geq\eps}\right) \Big)=0 .
\end{equation}
\end{proposition}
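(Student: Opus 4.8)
\textbf{Proof plan for Proposition \ref{prop2}.}
The natural strategy is to translate everything through the Bochner transform so that the Stepanov-level statement becomes a statement in $L^p(0,1;X)$, where the corresponding classical fact (convergence in $L^p$ is equivalent to convergence in measure plus uniform integrability) is available. Concretely, set $w_k = u_k - u \in BS^p(\R,X)$. One has $\norm{w_k}_{S^p} = \sup_{t\in\R}\norm{w_k^b(t)}_{L^p}$, and the measure quantity in \eqref{eq13} equals, by the translation-invariance computation already used in Lemma \ref{prop5}, $\sup_{t\in\R}\rm{meas}\left(\set{\theta\in(0,1)}{\norm{w_k^b(t)(\theta)}\geq\eps}\right)$. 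So \eqref{eq13} says: $\sup_{t\in\R}\rm{meas}\left(\set{\theta\in(0,1)}{\norm{w_k^b(t)(\theta)}\geq\eps}\right)\to 0$ for every $\eps>0$, i.e. the family $\{w_k^b(t)\;;\;t\in\R\}$ converges to $0$ in measure on $(0,1)$ \emph{uniformly in} $t$.

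\textbf{The easy direction} ($\norm{u_k-u}_{S^p}\to0$ implies \eqref{eq13}) is Chebyshev: for each $t$, $\eps^p\,\rm{meas}\left(\set{\theta\in(0,1)}{\norm{w_k^b(t)(\theta)}\geq\eps}\right)\leq \int_0^1\norm{w_k^b(t)(\theta)}^p\,d\theta \leq \norm{w_k}_{S^p}^p$; taking the supremum over $t$ and letting $k\to\infty$ gives \eqref{eq13}, and this direction needs neither boundedness of $u$ nor the uniform-integrability hypothesis.

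\textbf{The harder direction} is the converse, and here the uniform-integrability hypothesis on $\{u_k\}$ is essential (the $\rm{meas}$-condition alone allows $u_k$ to have large $L^p$-mass concentrated on small sets). By Lemma \ref{prop5}(ii), $\set{u_k}{k\in\N}$ being Stepanov $p$-uniformly integrable means $\mathcal{H}_b := \set{u_k^b(t)}{k\in\N,\ t\in\R}$ is $p$-uniformly integrable in $L^p(0,1;X)$; adding the single function $u^b(\cdot)$ (whose range $\set{u^b(t)}{t\in\R}$ is a bounded, hence here I should be careful — boundedness in $L^p$ is not uniform integrability). Actually the clean move: it suffices to show $\set{w_k^b(t)}{k\in\N,\ t\in\R}$ is $p$-uniformly integrable; since $\norm{a+b}^p\leq 2^{p-1}(\norm{a}^p+\norm{b}^p)$, this follows once $\set{u^b(t)}{t\in\R}$ is $p$-uniformly integrable, which we should first establish — and here is where I would invoke that $u\in BS^p(\R,X)$ together with the known fact (or a short argument) that a single bounded-Stepanov function is Stepanov $p$-uniformly integrable; more safely, since the hypothesis is symmetric we may reduce to the case $u=0$ by replacing $u_k$ with... no, $u$ is fixed, not necessarily in the uniformly integrable family. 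So the first real step is: \emph{show $\{u\}$ is Stepanov $p$-uniformly integrable}, e.g. by absolute continuity of the integral applied to $u^b(t)$ uniformly via the (uniform) continuity of $t\mapsto u^b(t)$ on each period and a covering/periodicity argument — this is a small lemma I would prove or cite. Granting that, $\mathcal{G} := \set{w_k^b(t)}{k\in\N,\ t\in\R}$ is $p$-uniformly integrable in $L^p(0,1;X)$.

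\textbf{Conclusion of the hard direction.} Fix $\eta>0$. By $p$-uniform integrability of $\mathcal{G}$, pick $\delta>0$ so that $\int_A\norm{w(\theta)}^p\,d\theta < (\eta/2)^p$ for every $w\in\mathcal{G}$ and every measurable $A\subset(0,1)$ with $\rm{meas}(A)\leq\delta$. For each $k$ and each $t$, split $(0,1)$ into $A_{k,t} = \set{\theta\in(0,1)}{\norm{w_k^b(t)(\theta)}\geq \eta/2}$ and its complement: on the complement the integrand is $<(\eta/2)^p$, contributing $<(\eta/2)^p$; by \eqref{eq13} (with $\eps = \eta/2$) there is $K$ so that for $k\geq K$, $\sup_t\rm{meas}(A_{k,t})\leq\delta$, hence the integral over $A_{k,t}$ is $<(\eta/2)^p$ by the choice of $\delta$. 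Therefore $\norm{w_k^b(t)}_{L^p}^p < (\eta/2)^p + (\eta/2)^p \leq \eta^p$ for all $t$ and all $k\geq K$ (after absorbing the factor $2^{1-1/p}\leq 2$ into a relabeling of constants, which I would handle cleanly). Taking the supremum over $t$ gives $\norm{u_k-u}_{S^p}\leq \eta$ for $k\geq K$, i.e. $\lim_{k\to\infty}\norm{u_k-u}_{S^p}=0$. The main obstacle is thus the preliminary lemma that a single $BS^p$ function is Stepanov $p$-uniformly integrable (needed to put $w_k^b$ into a uniformly integrable family); everything else is Chebyshev and the standard split into large-values/small-values sets.
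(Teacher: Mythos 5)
Your overall architecture (Chebyshev for the forward direction; reduction of the converse to the case $u=0$ by showing the differences form a Stepanov $p$-uniformly integrable family, then splitting each unit interval into the set where $\norm{u_k-u}\geq\eps$ and its complement) is exactly the paper's. But the one step you yourself flag as ``the main obstacle'' --- the preliminary lemma that a single $BS^p$ function is Stepanov $p$-uniformly integrable --- is false, and the argument you sketch for it (absolute continuity of the integral, made uniform in $t$ via the continuity of $t\mapsto u^b(t)$) cannot work: the range $\set{u^b(t)}{t\in\R}$ of a general $BS^p$ function is merely bounded in $L^p(0,1;X)$, not relatively compact, and bounded subsets of $L^p$ need not be uniformly integrable. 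Concretely, take $X=\R$ and $u=\sum_{n\geq 2} n^{1/p}\chi_{(n,\,n+1/n)}$: each spike contributes $\int_n^{n+1/n} n\,ds=1$, so $\sup_{t}\int_t^{t+1}\abs{u(s)}^p\,ds\leq 2$ and $u\in BS^p(\R)$, yet for every $\delta>0$ one can choose $n>1/\delta$ and $E=(n,n+1/n)\subset(n,n+1)$ with $\mathrm{meas}(E)<\delta$ and $\int_E\abs{u(s)}^p\,ds=1$. This is consistent with Proposition \ref{prop3}, which obtains Stepanov uniform integrability only for compact subsets of $S^p_{aa}(\R,X)$, precisely because there the Bochner-transformed ranges are relatively compact.

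The gap is repairable, and the repair is where the paper's proof genuinely differs from yours: the uniform integrability of $\{u\}$ is not extracted from $u\in BS^p(\R,X)$ at all, but from the hypothesis \eqref{eq13} itself. Indeed \eqref{eq13} implies $u_k\to u$ in measure on each interval $(t,t+1)$, so a subsequence $u_{\alpha(k)}$ converges to $u$ a.e.\ there, and Fatou's lemma gives
$\int_E\norm{u(s)}^p\,ds\leq\sup_{k}\int_E\norm{u_{\alpha(k)}(s)}^p\,ds$
for every measurable $E\subset(t,t+1)$; the right-hand side is controlled, uniformly in $t$ and in $\mathrm{meas}(E)$, by the assumed Stepanov $p$-uniform integrability of $\set{u_k}{k\in\N}$. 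With that in hand, $\{u\}$ and hence $\set{u_k-u}{k\in\N}$ are Stepanov $p$-uniformly integrable, and the remainder of your argument (the large-values/small-values split) goes through verbatim.
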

%*******************************************

%*******************************************
\begin{proof}
{\bf i)} $\Longrightarrow$ results of the following Tchebychev's inequality
\begin{equation*}
\rm{meas}\left( \set{s\in (t,t+1)}{\norm{u_k(s)-u(s)}\geq\eps}\right) 
\leq\frac{1}{\eps^p}\int_t^{t+1}\norm{u_k(s)-u(s)}^p\,ds .
\end{equation*}
\vskip 2mm
{\bf ii)} $\Longleftarrow$
\vskip 2 mm
{\it Step 1: we assume that $u=0$.} We fix $\eps>0$. 
We want to show that 
\begin{equation}
\label{eq15}
\exists k_0\in\N ,\quad k\geq k_0\implies\sup_{t\in\R}\int_{t}^{t+1} \norm{u_k(s)}^p \,ds\leq\eps .
\end{equation}
By hypothesis, the subset $\ds\set{u_k}{k\in\N}$ of $BS^p(\R,X)$ is  Stepanov $p$-uniformly integrable, then
there exists $\delta>0$ such that
\begin{equation}
\label{eq36}
\sup_{k\in\N}\sup_{t\in\R}\sup_{E\in\mathcal{E}^t_{\delta}}\int_{E}\norm{u_k(s)}^p\,ds \leq \frac{\eps}{2} .
\end{equation}
Let us denote by 
$$A^t_k=\set{s\in(t,t+1)}{\norm{u_k(s)}\geq\left(\frac{\eps}{2}\right)^{\frac{1}{p}}} .$$
From Hypothesis \eqref{eq13}, we have
\begin{equation}
\label{eq22}
\exists k_0\in\N ,\quad k\geq k_0\implies\sup_{t\in\R}\Big(
\rm{meas}\left( A^t_k\right) \Big) \leq\delta .
\end{equation}
From  inequalities
$$\int_{t}^{t+1} \norm{u_k(s)}^p \,ds \leq \int_{(t,t+1)\setminus A^t_k} \norm{u_k(s)}^p \,ds + \int_{A^t_k} \norm{u_k(s)}^p \,ds \leq \frac{\eps}{2} + \int_{A^t_k} \norm{u_k(s)}^p \,ds ,$$
and from \eqref{eq36} and \eqref{eq22}, we obtain \eqref{eq15}, then  the claim is proved.
\vskip 2 mm
{\it Step 2: general case.}
\vskip 2 mm
{\it First we prove $\ds\eqref{eq13} \Longrightarrow$ $u$ is Stepanov $p$-uniformly integrable.}
We fix $t\in\R$ and $E$ a measurable set of $(t,t+1)$. By assumption \eqref{eq13}, we deduce that the sequence $(u_{k})_{k\in\N}$ tends to $u$ in measure on $(t,t+1)$, that is 
\begin{equation*}
\forall\eps>0, \qquad \lim_{k\to\infty}
\rm{meas}\left( \set{s\in (t,t+1)}{\norm{u_k(s)-u(s)}\geq\eps}\right) =0 ,
\end{equation*}
 then there exists a subsequence $(u_{\alpha(k)})_{k\in\N}$ of $(u_{k})_{k\in\N}$ such that
$\ds\lim_{k\to +\infty} u_{\alpha(k)}(s) = u(s)$ for a.e. $s\in (t,t+1)$ (cf. \cite[Theorem 3, p. 45]{Di-Uh}).
By using Fatou's lemma, we have
$$\int_E \norm{u(s)}^p \,ds = \int_E \lim_{k\to +\infty} \norm{u_{\alpha(k)}(s)}^p \,ds \leq \liminf_{k\to +\infty}\int_E \norm{u_{\alpha(k)}(s)}^p \,ds  ,$$
then
$$\int_E \norm{u(s)}^p \,ds \leq \sup_{k\in\R}\int_{E}\norm{u_{\alpha(k)}(s)}^p\,ds .$$
We deduce that $\{u\}$ is Stepanov $p$-uniformly integrable, since by hypothesis $\ds\set{u_k}{k\in\N}$ is Stepanov $p$-uniformly integrable.
\vskip 2 mm
{\it Secondly we prove $\ds\eqref{eq13} \Longrightarrow \lim_{k\to+\infty}\norm{u_k-u}_{S^p}=0$.}
Let us denote by $v_k=u_k-u$. From the inequality
$\ds\norm{v_k(t)} \leq \norm{u_k(t)} + \norm{u(t)}$, 
we deduce that $\ds\set{v_k}{k\in\N}$ is Stepanov $p$-uniformly integrable, since $\{u\}$ and $\ds\set{u_k}{k\in\N}$ are  Stepanov $p$-uniformly integrable. We conclude by using step 1 on the sequence $(v_k)_{k\in\N}$.
\end{proof}
%*******************************************

\vskip 3 mm
The following corollary extends a result of Danilov \cite[Lemma 1]{Da} from the almost periodic case case to the almost automorphic case case

%*******************************************
\begin{corollary}
%corollary 5.9
\label{prop1}
Suppose that $u\in S^p_{aa}(\R,X)$ and $u_k\in S^p_{aa}(\R,X)$ for all $k\in\N$. Then $\ds \lim_{k\to+\infty}\norm{u_k-u}_{S^p}=0$ if and only if $\ds\set{u_k}{k\in\N}$ is  Stepanov $p$-uniformly integrable and \eqref{eq13} holds.
\end{corollary}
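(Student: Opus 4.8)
The plan is to deduce this directly from Proposition \ref{prop2}, whose hypotheses require only that $u$ and the $u_k$ lie in $BS^p(\R,X)$. Since $S^p_{aa}(\R,X)\subset BS^p(\R,X)$, Proposition \ref{prop2} applies whenever we additionally know that $\set{u_k}{k\in\N}$ is Stepanov $p$-uniformly integrable; in that case it gives precisely the equivalence between $\norm{u_k-u}_{S^p}\to0$ and \eqref{eq13}. So the only thing to add is the reverse bookkeeping: when $\norm{u_k-u}_{S^p}\to0$ with all functions in $S^p_{aa}(\R,X)$, one must produce the Stepanov $p$-uniform integrability of $\set{u_k}{k\in\N}$ for free, so that Proposition \ref{prop2} can then be invoked to yield \eqref{eq13}.

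First I would prove the direction ``$\Longrightarrow$''. Assume $\norm{u_k-u}_{S^p}\to0$. Then the set $\mathcal{K}=\{u\}\cup\set{u_k}{k\in\N}$ is a compact subset of $S^p_{aa}(\R,X)$ (it is the union of a convergent sequence together with its limit, which is compact in any metric space, and $S^p_{aa}(\R,X)$ is a Banach space in which convergence takes place). By Proposition \ref{prop3}, $\mathcal{K}$ is Stepanov $p$-uniformly integrable; in particular its subset $\set{u_k}{k\in\N}$ is Stepanov $p$-uniformly integrable. Now that we know the uniform integrability hypothesis holds, Proposition \ref{prop2} applies and gives \eqref{eq13}. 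This settles ``$\Longrightarrow$''.

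For the direction ``$\Longleftarrow$'', assume $\set{u_k}{k\in\N}$ is Stepanov $p$-uniformly integrable and that \eqref{eq13} holds. Since $u,u_k\in S^p_{aa}(\R,X)\subset BS^p(\R,X)$, Proposition \ref{prop2} applies immediately and yields $\norm{u_k-u}_{S^p}\to0$. This direction requires nothing new beyond Proposition \ref{prop2}.

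The only mildly delicate point—and the one I would state carefully—is the claim in the forward direction that a norm-convergent sequence in $S^p_{aa}(\R,X)$ together with its limit forms a \emph{compact} set there; this is where the almost automorphic structure enters, through Proposition \ref{prop3} and hence through Lemma \ref{lem1} (the range of an almost automorphic function is relatively compact). Everything else is a routine invocation of the already-established Proposition \ref{prop2}, so there is no real obstacle; the work was done in proving Propositions \ref{prop2} and \ref{prop3}.
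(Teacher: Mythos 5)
Your proposal is correct and matches the paper's proof exactly: the forward direction obtains Stepanov $p$-uniform integrability by noting that $\{u\}\cup\set{u_k}{k\in\N}$ is compact in $S^p_{aa}(\R,X)$ and invoking Proposition \ref{prop3}, and both directions then reduce to Proposition \ref{prop2}. Nothing to add.
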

%*******************************************

%*******************************************
\begin{proof}
It is a consequence of Proposition \ref{prop3} and \ref{prop2}, since $\ds\set{u_k}{k\in\N}\cup\{u\}$ is a compact subset of $S^p_{aa}(\R,X)$, when $u_k\to u$ in $S^p_{aa}(\R,X)$ as $k\to+\infty$.
\end{proof}
%*******************************************

%%%%%%%%%%%%%%%%%%%%%%%%%%%
\section{Sufficient conditions for the continuity of Nemytskii operators}
\label{vi}
%%%%%%%%%%%%%%%%%%%%%%%%%%%
%Section 6

Both $X$ and $Y$ are Banach spaces. Let $p$ and $q$ be two real numbers in $[1,+\infty)$.
From a map $f:\R\times X\to Y$ we give sufficient conditions to obtain the continuity of the Nemytskii operators built on $f$ in the Stepanov almost automorphic case
\begin{equation*}
\mathcal{N}_{f}:S_{aa}^p(\R,X)\to S_{aa}^q(\R,Y)
\end{equation*}
and in the Stepanov almost periodic case
\begin{equation*}
\mathcal{N}_{f}:S_{ap}^p(\R,X)\to S_{ap}^q(\R,Y)
\end{equation*}
defined by 
\begin{equation}
\label{eq45}
\mathcal{N}_{f}(u)(t)=f(t,u(t)) \quad \text{  for } t\in\R .
\end{equation}
In Section \ref{iv}, we have given necessary and sufficient conditions to obtain the continuity of Nemytskii operators between Stepanov spaces. 
The assumptions used are on some Nemytskii operators built on $f$ (cf. theorems \ref{th2} and \ref{th3}). 
In this section, the assumptions will be directly on the function $f$.

%%%%%%%%%%%%%%%%%%%%%%%%%%%
\subsection{Statement of results}
%%%%%%%%%%%%%%%%%%%%%%%%%%%

Now we formulate the following hypotheses.
\vskip2mm
{\bf (H1)} {\it There exist a constant $a>0$ and $b \in S^q_{aa}(\R)$ such that
$\ds\norm{f(t,x)}\leq a \norm{x}^{\frac{p}{q}} + b(t)$ for all $x\in X$ and 
a.e. $t\in\R$.}
\vskip2mm
{\bf (H2)} {\it For a.e. $t\in \R$, the map $f(t,\cdot)$ is continuous from $X$ into $Y$.}
\vskip 2mm
{\bf (H3)} {\it For all compact  set $K\subset X$, for all $r>0$, there exists a measurable set $N\subset \R$ such that $\ds\sup_{t\in\R}\Big(\rm{meas}\left((t,t+1)\cap N\right)\Big)<r$ and
$$\forall \eps>0,\, \exists \delta>0, \, \forall x_1 ,  x_2\in K, \, \forall t\in\R\setminus N, \quad\norm{x_1-x_2}\leq \delta \implies \norm{f(t,x_1)-f(t,x_2)}\leq\eps .
$$}

%*******************************************
\begin{remark} 
%remark 6.1
{\it About Hypothesis {\bf (H1)}.} In the context of separable Banach spaces and in the periodic case: $f(t+T,x)=f(t,x)$, we will see in Corollary \ref{cor5}, that an equivalent hypothesis to {\bf (H1)} in the periodic case which is denoted {\bf (H4)}, is a necessary condition for that the Nemytskii operator $\mathcal{N}_{f}$ maps $S_{aa}^p(\R,X)$ into $S_{aa}^q(\R,Y)$ (resp. $S_{ap}^p(\R,X)$ into $S_{ap}^q(\R,Y)$). Hypothesis  {\bf (H4)} is {\bf (H1)} where $b \in L^q(0,T)$ and the inequality  holds for $t\in(0,T)$ (cf. page \pageref{pa1}).
\end{remark}
%*******************************************

%*******************************************
\begin{remark} {\it About  Hypothesis {\bf (H2)}.}
%remark 6.2
 To state that the Nemytskii operator $\mathcal{N}_{f}$ maps $S_{aa}^p(\R,X)$ into $S_{aa}^q(\R,Y)$ or  $S_{ap}^p(\R,X)$ into $S_{ap}^q(\R,Y)$, a necessary condition is that the function $f(\cdot,u(\cdot))$ is strongly measurable on each bounded interval, for $u\in S^p_{ap}(\R,X)\subset S^p_{aa}(\R,X)$. Without Hypothesis {\bf (H2)}, it is difficult to reach this necessary condition.
\end{remark}
%*******************************************

%*******************************************
\begin{remark} {\it About  Hypothesis {\bf (H3)}.}
%remark 6.3
\label{rq2}
{\bf i)}
For a compact $K\subset X$ and $\delta>0$, let us denote by
\begin{equation}
\label{eq57}
\alpha^K_{\delta}(t)=\sup\set{\norm{f(t,x_1)-f(t,x_2}}{x_1\in K,\,x_2\in K,\, \norm{x_1-x_2}\leq\delta} .
\end{equation}
Then  Hypothesis {\bf (H3)} is equivalent to the following assertion: for all compact  set $K\subset X$ and for all $r>0$, there exists  a measurable set $N\subset \R$ such that 
$$\sup_{t\in\R}\Big(\rm{meas}\left((t,t+1)\cap N\right)\Big)<r \text{ and } \alpha^K_{\delta}(t)\to0 \text{ uniformly on } \R\setminus N \text{ as }\delta\to 0 .$$
Remark that $\alpha^K_{\delta}(t)\to0$ for a.e.  $t\in \R$ as $\delta\to 0$, since $f(t,\cdot)$ is uniformly continuous on the compact set $K$, but Hypothesis {\bf (H3)} is not necessarily satisfied.
\vskip 2 mm
{\bf ii)} Formulate the following condition:
\vskip 2 mm
{\bf (C2)} {\it  There exist $a\in BS^1(\R)$ and $\ds\eps:\R\to\R$ with $\ds\lim_{\delta\to0}\eps(\delta)=0$ such that 
$$\alpha^K_{\delta}(t) \leq a(t)\eps(\delta)$$
where $\alpha^K_{\delta}$ is defined by \eqref{eq57}.}
\vskip 2 mm
Let us denote by $\ds N_R=\set{t\in\R}{a(t)>R}$ for $R>0$. 
From the Tchebychev's inequality, we deduce that
$\norm{a}_{S^1}\geq R\,  \rm{meas}\left((t,t+1)\cap N_R\right)$, then $\ds\sup_{t\in\R}\Big(\rm{meas}\left((t,t+1)\cap N_R\right)\Big)\to0$ as $R\to+\infty$. 
We also have $\ds\lim_{\delta\to0}\alpha^K_{\delta}(t)\to0$  uniformly in $t\in\R\setminus N_R$.
Consequently Condition {\bf (C2)} implies Hypothesis {\bf (H3)}.
\vskip 2 mm
Evidently Condition {\bf (C2)} holds if  the assumption \eqref{eq49} 
of Corollary \ref{cor8} holds.
\end{remark}
%*******************************************

\vskip 2 mm
Now we give two theorems which are equivalent.

%*******************************************
\begin{theorem}
%theorem 6.4
\label{th5}
Suppose that {\bf (H1)-(H3)} hold. 
\vskip 2 mm
{\bf i)} If $f(\cdot,x)\in S_{aa}^q(\R,Y)$ for all $x\in X$, then the Nemytskii operator $\mathcal{N}_{f}$ defined by \eqref{eq45} maps $S_{aa}^p(\R,X)$ into $S_{aa}^q(\R,Y)$ and $\mathcal{N}_{f}$ is continuous.
\vskip 2 mm
{\bf ii)} If $f(\cdot,x)\in S_{ap}^q(\R,Y)$ for all $x\in X$, then $\mathcal{N}_{f}$ maps $S_{ap}^p(\R,X)$ into $S_{ap}^q(\R,Y)$ and $\mathcal{N}_{f}$ is continuous.
\end{theorem}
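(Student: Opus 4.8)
The plan is to reduce everything to Theorem \ref{th2} (for i)) and Theorem \ref{th3} (for ii)), so that it suffices to check that the operator $H:L^p_1(\R,X)\to BS^q(\R,Y)$ with $H(u)=\mathcal{N}_f(u)$ is well-defined and continuous, together with the assumed hypothesis $f(\cdot,x)\in S^q_{aa}(\R,Y)$ (resp.\ $S^q_{ap}(\R,Y)$) for all $x\in X$. I would treat i) and ii) simultaneously since the reduction to $H$ is identical. First I would note that by {\bf (H1)} and {\bf (H2)} the function $f$ is, after restriction to $(0,1)$, a Carath\'eodory function with the growth bound $\norm{f(t,x)}\leq a\norm{x}^{p/q}+b(t)$; hence by Theorem \ref{th6} (applied on the interval $(0,1)$, or on each shifted interval $(t,t+1)$) the operator $H$ maps $L^p_1(\R,X)$ into $BS^q(\R,Y)$ and, crucially, $\norm{H(u)}_{S^q}\leq a\norm{u}_{S^p}^{p/q}+\norm{b}_{S^q}$ uniformly in the shift. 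Here I would use the translation-invariance built into the $S^p$-norm and \eqref{eq44} to get the bound on $\norm{H(u)}_{S^q}$ from the single-interval bound. So well-definedness of $H$ is essentially Theorem \ref{th6}.

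The core of the argument is the \emph{continuity} of $H$. Fix $u\in L^p_1(\R,X)$ and a sequence $u_k\to u$ in $L^p_1(\R,X)$, i.e.\ $\norm{u_k-u}_{S^p}=\norm{u_k-u}_{L^p(0,1;X)}\to0$. I would show $\norm{H(u_k)-H(u)}_{S^q}\to0$. The strategy: a relatively compact set in $L^p(0,1;X)$ is tight and $p$-uniformly integrable, so $\{u_k\}$ (being a convergent sequence together with its limit, all $1$-periodic) is Stepanov tight and Stepanov $p$-uniformly integrable by Lemma \ref{prop5}; and by {\bf (H1)} the family $\{f(\cdot,u_k(\cdot))\}$ is then also Stepanov $q$-uniformly integrable, using the bound $\norm{f(t,u_k(t))}^q\leq (a\norm{u_k(t)}^{p/q}+b(t))^q\lesssim \norm{u_k(t)}^p+b(t)^q$ and the fact that $\{\norm{u_k(\cdot)}^p\}$ and $\{b^q\}$ are Stepanov $1$-uniformly integrable. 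Given this, Proposition \ref{prop2} tells me that $\norm{H(u_k)-H(u)}_{S^q}\to0$ will follow once I verify
\begin{equation*}
\forall \eps>0,\quad \lim_{k\to\infty}\sup_{t\in\R}\Big(\mathrm{meas}\big(\set{s\in(t,t+1)}{\norm{f(s,u_k(s))-f(s,u(s))}\geq\eps}\big)\Big)=0,
\end{equation*}
i.e.\ convergence in Stepanov measure. To establish this I pass to the Bochner-transform side: by tightness there is a compact $K\subset X$ carrying all but measure-$\rho$ of each $u_k$ and of $u$ uniformly in the shift; on the set where both $u_k(s)$ and $u(s)$ lie in $K$ and $s\notin N$ (with $N$ the exceptional set from {\bf (H3)} for this $K$ and this $\rho$), the quantity $\alpha^K_\delta(s)$ of \eqref{eq57} controls $\norm{f(s,u_k(s))-f(s,u(s))}$ as soon as $\norm{u_k(s)-u(s)}\leq\delta$, and $\alpha^K_\delta\to0$ uniformly off $N$ as $\delta\to0$. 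Meanwhile $\norm{u_k-u}_{S^p}\to0$ forces $u_k\to u$ in Stepanov measure (Tchebychev), so the set where $\norm{u_k(s)-u(s)}>\delta$ has uniformly small measure for $k$ large. Combining the measure bounds (from $N$, from the tightness complement, and from $u_k\to u$ in measure) gives the displayed convergence.

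The main obstacle I anticipate is managing the \emph{uniformity in the translation $t$} throughout — every estimate (tightness, $p$-uniform integrability, the $N$ from {\bf (H3)}, convergence in measure) must hold with $\sup_{t\in\R}$ in front, which is exactly what the Stepanov versions of these notions (Definitions \ref{def2}, \ref{def1}) and Lemma \ref{prop5} are designed to deliver; the bookkeeping of choosing $\rho$ small, then $\delta$ small, then $k$ large, in the right order, is where care is needed. A secondary technical point is the strong measurability of $s\mapsto f(s,u(s))$ on bounded intervals for $u\in L^p_1(\R,X)$, which follows from {\bf (H1)}--{\bf (H2)} (separability is not needed for the measurability of a Carath\'eodory composition against a measurable $u$ once one approximates $u$ by simple functions, or one may invoke that $f(\cdot,x)$ is measurable and $f(t,\cdot)$ continuous). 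Once $H$ is shown well-defined and continuous, part i) is immediate from Theorem \ref{th2} (iii) $\Rightarrow$ i), using the hypothesis $f(\cdot,x)\in S^q_{aa}(\R,Y)$, and part ii) from Theorem \ref{th3} in the same way.
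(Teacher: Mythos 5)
Your proposal is correct and follows essentially the same route as the paper: reduction to Theorem \ref{th2}/\ref{th3} via the operator $H$ on $L^p_1(\R,X)$, well-definedness from the growth bound in \textbf{(H1)}, and continuity via Proposition \ref{prop2} after establishing Stepanov $q$-uniform integrability of $\{H(u_k)\}$ and convergence in Stepanov measure (your tightness/\textbf{(H3)}/Tchebychev argument is exactly the content of the paper's Lemma \ref{lem2}). One small caveat: the appeal to Theorem \ref{th6} for well-definedness is superfluous and would require separability of $X$ and $Y$, which Theorem \ref{th5} does not assume; the direct estimate $\norm{H(u)}_{S^q}\leq a\norm{u}_{S^p}^{p/q}+\norm{b}_{S^q}$ that you also give is what the paper uses and is all that is needed.
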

%*******************************************

The proof of Theorem \ref{th5} is given in Subsection \ref{vi-ii}

%****************************************
\begin{remark}
%remark 6.5
Theorem \ref{th5} holds if $f$ satisfies Hypothesis {\bf (H1)} and  $f\in AA_U(\R\times X,Y)$ for i) and $f\in AP_U(\R\times X,Y)$) for ii) (cf. resp. Theorem \ref{th8} and \ref{th9}).  
\end{remark}
%****************************************

%Now we give an other result which is equivalent to Theorem \ref{th5}.
From a function  $f:\R\times X\to Y$ and $K\subset X$ a compact set, we consider  $\tilde{f}_K$ the map defined by
\begin{equation}
\label{eq63}
\tilde{f}_K : \R \to C(K,Y)) \text{ with }\tilde{f}_K(t)=f(t,\cdot) \text{ for a.e. }t\in\R .\end{equation}

%*******************************************
\begin{theorem}
%theorem 6.6
\label{th11}
Suppose that {\bf (H1)} holds. 
\vskip 2 mm
{\bf i)} If for all compact set $K\subset X$, $\tilde{f}_K\in S_{aa}^q(\R,C(K,Y))$, then $\mathcal{N}_{f}$ maps $S_{aa}^p(\R,X)$ into $S_{aa}^q(\R,Y)$ and $\mathcal{N}_{f}$ is continuous.
\vskip 2 mm
{\bf ii)} If for all compact set $K\subset X$, $\tilde{f}_K\in S_{ap}^q(\R,C(K,Y))$, then $\mathcal{N}_{f}$ maps $S_{ap}^p(\R,X)$ into $S_{ap}^q(\R,Y)$ and $\mathcal{N}_{f}$ is continuous.
\end{theorem}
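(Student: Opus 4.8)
The plan is to deduce Theorem \ref{th11} from Theorem \ref{th5} by showing that the hypotheses of Theorem \ref{th11} imply those of Theorem \ref{th5}. Concretely, I would prove that if $\tilde{f}_K \in S^q_{aa}(\R,C(K,Y))$ for every compact $K\subset X$ (resp. $S^q_{ap}$), then $f$ satisfies \textbf{(H2)} and \textbf{(H3)}, and that $f(\cdot,x)\in S^q_{aa}(\R,Y)$ (resp. $S^q_{ap}$) for every $x\in X$. The first task is \textbf{(H2)}: for $x\in X$ apply the hypothesis with $K=\{x\}$; since $\tilde f_{\{x\}}\in L^q_{loc}(\R,C(\{x\},Y))\cong L^q_{loc}(\R,Y)$ one recovers that $f(t,x)$ is defined for a.e.\ $t$, and running over a countable dense set and using the modulus of continuity supplied by \textbf{(H3)} (proved next) on compact neighbourhoods gives that $f(t,\cdot)$ is continuous for a.e.\ $t$. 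The claim $f(\cdot,x)\in S^q_{aa}(\R,Y)$ is immediate: the evaluation map $C(K,Y)\to Y$, $g\mapsto g(x)$, is linear and $1$-Lipschitz, and it maps the Bochner transform $\tilde f_K^{\,b}$ (an element of $AA(\R,L^q(0,1;C(K,Y)))$, resp.\ $AP$) to the Bochner transform of $f(\cdot,x)$, and almost automorphy (resp.\ almost periodicity) is preserved by composition with a bounded linear operator.

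The heart of the argument is \textbf{(H3)}. Fix a compact $K\subset X$ and $r>0$. The point is that $\tilde f_K\in S^q_{aa}(\R,C(K,Y))$, hence the singleton $\{\tilde f_K\}$ is a compact subset of $S^q_{aa}(\R,C(K,Y))$, so by Proposition \ref{prop3} it is Stepanov $q$-uniformly integrable and Stepanov tight. In particular, applying Lemma \ref{prop5} to $\mathcal{H}=\{\tilde f_K\}$, the set $\mathcal{H}_b=\{\tilde f_K^{\,b}(t)\,;\,t\in\R\}\subset L^q(0,1;C(K,Y))$ is $q$-uniformly integrable; equivalently, the function $t\mapsto \|\tilde f_K^{\,b}(t)\|_{L^q}^q=\int_t^{t+1}\|f(s,\cdot)\|_{C(K,Y)}^q\,ds$ has a uniform tail bound. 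From this I extract $R>0$ and the set $N=N_R=\{t\in\R\,;\,\|f(t,\cdot)\|_{C(K,Y)}>R\}$; Tchebychev as in Remark \ref{rq2} gives $\sup_{t}\operatorname{meas}((t,t+1)\cap N)<r$. Off $N$ the family $\{f(t,\cdot)\,;\,t\in\R\setminus N\}$ is bounded in $C(K,Y)$; to get the \emph{uniform} modulus of continuity appearing in \textbf{(H3)}, I invoke that the range $\{\tilde f_K^{\,b}(t)\,;\,t\in\R\}$ is in fact \emph{relatively compact} in $L^q(0,1;C(K,Y))$ (by Lemma \ref{lem1}, since $\{\tilde f_K^{\,b}\}$ is a compact subset of $AA(\R,L^q(0,1;C(K,Y)))$). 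A relatively compact, hence tight and uniformly integrable, family of $C(K,Y)$-valued $L^q$ functions has, after discarding a set of small measure uniform in $t$, values in a fixed compact subset $\mathcal{C}\subset C(K,Y)$; by the Arzelà–Ascoli theorem $\mathcal{C}$ is equicontinuous on $K$, which is exactly $\alpha^K_\delta(t)\to 0$ uniformly for $t\in\R\setminus N$ as $\delta\to 0$. This is the reformulation of \textbf{(H3)} given in Remark \ref{rq2} i).

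Once \textbf{(H1)}, \textbf{(H2)}, \textbf{(H3)} and $f(\cdot,x)\in S^q_{aa}(\R,Y)$ (resp.\ $S^q_{ap}$) are in hand, part i) (resp.\ ii)) of Theorem \ref{th5} applies verbatim and yields that $\mathcal{N}_f$ maps $S^p_{aa}(\R,X)$ into $S^q_{aa}(\R,Y)$ (resp.\ the $ap$ spaces) and is continuous. Conversely, for the equivalence announced in the text one should also check that the hypotheses of Theorem \ref{th5} imply $\tilde f_K\in S^q_{aa}(\R,C(K,Y))$; this direction uses Theorem \ref{th8} v) (resp.\ Theorem \ref{th9} v)) together with the bound \textbf{(H1)} to control the $S^q$-norm, but it is not needed for the one-directional statement of Theorem \ref{th11} as displayed. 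The main obstacle I anticipate is the passage, in the proof of \textbf{(H3)}, from ``relatively compact family of $L^q$-valued functions off a small set'' to a genuinely equicontinuous family of elements of $C(K,Y)$: one must be careful that the exceptional measure-$r$ set $N$ can be chosen \emph{independently of $t$} (this is where Stepanov tightness via Lemma \ref{prop5}, rather than ordinary tightness on a single interval, is essential) and that equicontinuity is extracted uniformly over the compact $\mathcal{C}$ rather than pointwise.
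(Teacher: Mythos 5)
Your proposal follows essentially the same route as the paper: Theorem \ref{th11} is deduced from Theorem \ref{th5} by checking that $\tilde f_K\in S^q(\R,C(K,Y))$ for all compact $K$ yields \textbf{(H2)}, $f(\cdot,x)\in S^q(\R,Y)$ (via $K=\{x\}$), and \textbf{(H3)} — the latter exactly as you describe, from Proposition \ref{prop3} (Stepanov tightness of the compact set $\{\tilde f_K\}$, itself obtained through Lemma \ref{lem1} and Lemma \ref{prop5}) combined with the Arzel\`a--Ascoli theorem on the resulting compact subset of $C(K,Y)$. The paper additionally proves the converse implication so as to assert the full equivalence of the hypotheses of Theorems \ref{th5} and \ref{th11}, but, as you correctly note, that direction is not needed for the stated one-directional result.
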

%*******************************************

The proof of Theorem \ref{th11} is given in Subsection \ref{vi-ii}. We will prove that the hypotheses of Theorem \ref{th5} are equivalent to those of Theorem \ref{th11}.

%****************************************
\begin{remark}
\label{rq4}
%remark 6.7
In  \cite[Lemma 3]{Da}, Danilov has stated  in the context of metric spaces,  that if $u:\R\to X$ and $\tilde{f}:\R\to BC(X,Y)$ with $\tilde{f}(t)=f(t,\cdot)$ are Stepanov almost periodic in Lebesgue measure (see the definition in Remark \ref{rq3}, iii)), then $\mathcal{N}_{f}(u):\R\to Y$ is also Stepanov almost periodic in Lebesgue measure. Then by using this last result of Danilov, in the context of Banach space, Andres et al. \cite[Proposition 3.4]{An-Pe} have shown that $\mathcal{N}_{f}$ maps $S_{ap}^p(\R,X)$ into $S_{ap}^q(\R,X)$, when $\tilde{f}\in S_{ap}^q(\R,BC(X,X))$. 
In the almost periodic case, Theorem \ref{th11} is an improvement of \cite[Proposition 3.4]{An-Pe}.
Even in the linear case \cite[Proposition 3.4]{An-Pe} 
does not allow us to conclude.
 Indeed if 
$f(t,x)=A(t)x$ with $A\in S_{ap}^r(\R,\mathcal{L}(X))$ for $r\geq1$, where $\mathcal{L}(X))$ stands for the set of bounded and linear maps from $X$ in to itself,
assumptions of \cite[Proposition 3.4]{An-Pe} are not satisfied, since $\mathcal{L}(X)) \nsubseteq BC(X,X)$. But Theorem \ref{th11} permits to conclude for $p$ and $q\geq1$ such that $\ds\frac{1}{q}=\frac{1}{p}+\frac{1}{r}$.
\end{remark}
%****************************************

\vskip 2 mm
To apply Theorem \ref{th11} it is necessary to establish that $\tilde{f}_K\in L_{loc}^q(\R,C(K,Y))$,  especially that $\tilde{f}_K$ is strongly measurable on each bounded interval, for that we give the following criterion which will be used to prove Theorem \ref{th11}.

%*******************************************
\begin{lemma}
%lemma 6.8
\label{lem6}
Suppose that {\bf (H1)-(H2)} hold and $f(\cdot,x)\in L_{loc}^q(\R,Y)$ for all $x\in X$. 
Then  $\tilde{f}_K\in L_{loc}^q(\R,C(K,Y))$ for all compact set $K\subset X$.
\end{lemma}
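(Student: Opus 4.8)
The goal is to show that for a fixed compact set $K\subset X$, the map $\tilde{f}_K:\R\to C(K,Y)$ with $\tilde{f}_K(t)=f(t,\cdot)|_K$ lies in $L^q_{loc}(\R,C(K,Y))$. This requires two things: that $\tilde{f}_K(t)\in C(K,Y)$ for a.e.\ $t$ (which is immediate from {\bf (H2)}, since a continuous map on a compact set is bounded, hence $f(t,\cdot)|_K\in C(K,Y)$); that $\tilde{f}_K$ is strongly measurable on each bounded interval $(a,b)$; and that $\int_a^b\norm{\tilde{f}_K(t)}_{C(K,Y)}^q\,dt<+\infty$. The $q$-integrability will follow easily from {\bf (H1)}: for a.e.\ $t$ one has $\norm{\tilde{f}_K(t)}_{C(K,Y)}=\sup_{x\in K}\norm{f(t,x)}\leq a\,(\sup_{x\in K}\norm{x})^{p/q}+b(t)$, and since $K$ is bounded and $b\in S^q_{aa}(\R)\subset L^q_{loc}(\R)$, the right-hand side is in $L^q_{loc}(\R)$; integrating over $(a,b)$ gives the bound. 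So the main obstacle is strong measurability of $\tilde{f}_K$.

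For strong measurability I would use the Pettis measurability theorem: since $C(K,Y)$ need not be separable, I would first reduce to a separable subspace. The plan is to pick a countable dense subset $\{x_j\}_{j\in\N}$ of $K$ (which exists since $K$, being compact, is separable). For each $j$, the map $t\mapsto f(t,x_j)$ is strongly measurable on $(a,b)$, being the restriction of $f(\cdot,x_j)\in L^q_{loc}(\R,Y)$; hence it is a.e.\ equal to a limit of simple $Y$-valued functions, and in particular takes values a.e.\ in a separable subspace $Y_j\subset Y$. Let $Y_0=\overline{\bigcup_j Y_j}$, a separable closed subspace of $Y$. By {\bf (H2)}, for a.e.\ $t$ the map $f(t,\cdot)|_K$ is continuous, and since $\{x_j\}$ is dense in $K$, the range $f(t,K)$ is contained in the closure of $\{f(t,x_j):j\in\N\}\subset Y_0$; thus $\tilde{f}_K(t)\in C(K,Y_0)$ for a.e.\ $t$, and $C(K,Y_0)$ is separable. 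It then suffices to check weak measurability of $\tilde{f}_K$ as a $C(K,Y_0)$-valued map, or more directly, measurability with respect to the Borel structure of $C(K,Y_0)$ (which coincides with the cylindrical $\sigma$-algebra by separability).

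To finish, I would exhibit $\tilde{f}_K$ as an a.e.\ pointwise limit of strongly measurable $C(K,Y)$-valued functions. For each $n$, let $\{K^{(n)}_1,\dots,K^{(n)}_{m_n}\}$ be a finite cover of $K$ by sets of diameter $<1/n$ with chosen points $x^{(n)}_i\in K^{(n)}_i$, and build a continuous partition of unity $\{\varphi^{(n)}_i\}$ subordinate to an open refinement. Define $g_n(t)=\sum_{i}\varphi^{(n)}_i(\cdot)\,f(t,x^{(n)}_i)\in C(K,Y)$; this is strongly measurable in $t$ since it is a finite sum of products of a fixed element of $C(K)$ with a strongly measurable $Y$-valued function. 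One then checks, using the uniform continuity of $f(t,\cdot)$ on $K$ for a.e.\ fixed $t$ (guaranteed by {\bf (H2)} and compactness of $K$), that $\norm{g_n(t)-\tilde{f}_K(t)}_{C(K,Y)}=\sup_{x\in K}\norm{\sum_i\varphi^{(n)}_i(x)(f(t,x^{(n)}_i)-f(t,x))}\to0$ as $n\to\infty$ for a.e.\ $t$, because on the support of $\varphi^{(n)}_i$ the point $x$ is within $1/n$ of $x^{(n)}_i$. Hence $\tilde{f}_K$ is an a.e.\ limit of strongly measurable functions, so it is strongly measurable on $(a,b)$; combined with the $q$-integrability bound above, this gives $\tilde{f}_K\in L^q_{loc}(\R,C(K,Y))$. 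The delicate point to get right is precisely this uniform-continuity-in-$x$ argument turning a finite "sampling" of $f$ into a uniform-norm approximation on $C(K,Y)$ for a.e.\ fixed $t$; everything else is bookkeeping with Pettis' theorem and {\bf (H1)}.
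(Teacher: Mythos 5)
Your proposal is correct, and for the crux of the lemma --- strong measurability of $\tilde{f}_K$ --- it takes a genuinely different route from the paper. The paper first performs the same reduction you sketch (essential separable-valuedness of each $f(\cdot,x_j)$ for $x_j$ in a countable dense subset of $K$, plus {\bf (H2)}, to land in a separable closed subspace $Y_s\subset Y$), then invokes the standard result that a Carath\'eodory function into a separable Banach space induces a Borel measurable map $t\mapsto f(t,\cdot)$ into $C(K,Y_s)$ (Aliprantis--Border, Theorem 4.54), and finally uses separability of $C(K,Y_s)$ to upgrade Borel measurability to strong measurability. You instead build explicit strongly measurable approximants $g_n(t)=\sum_i\varphi^{(n)}_i(\cdot)f(t,x^{(n)}_i)$ via partitions of unity on covers of $K$ of mesh $1/n$, and show $g_n(t)\to\tilde{f}_K(t)$ in $C(K,Y)$-norm for a.e.\ $t$ by uniform continuity of $f(t,\cdot)$ on $K$; since each $g_n$ is a finite sum of a fixed element of $C(K)$ times a strongly measurable $Y$-valued function, and strong measurability passes to a.e.\ pointwise limits, you are done. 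Your construction is more elementary and self-contained (it avoids the cited Carath\'eodory measurability theorem entirely), and in fact it renders your own Pettis/separable-subspace preamble redundant --- the approximants already take values in a separable subspace of $C(K,Y)$ by construction. The paper's route is shorter on the page at the cost of an external citation. The integrability estimate from {\bf (H1)} is the same in both arguments.
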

%*******************************************

%*******************************************
\begin{proof} 
Let $K$ be a compact subset of $X$. Consider the restriction of $f$ on $(a,b)\times K$.
For each $x\in K$, the function $f(\cdot,x)$ is strongly measurable on   $(a,b)$, then $f(\cdot,x)$ is essentially separably valued, i.e., there exists a subset $N_x \subset (a,b)$ of measure zero such that the set $\set{f(t,x)}{t\in(a,b)\setminus N_x}$ is separable \cite[Lemma 11.36, p. 417]{AB}. A a compact set is separable, then there exists a subset $D\subset K$ that is countable and dense in $K$.
If we denote by $\ds N=\bigcup_{x\in D} N_x$, then its  measure is null and the set
$A_x=\set{f(t,x)}{t\in(a,b)\setminus N}$ is separable. It follows that 
$\ds\overline{\bigcup_{x\in D}A_x}$ is separable. 
 The function $f(t,\cdot)$ being continuous a.e. $t\in(a,b)$, we deduce that 
 $\ds\bigcup_{x\in K}A_x \subset\overline{\bigcup_{x\in D}A_x}$, it follows that $\ds\bigcup_{x\in K}A_x$ is separable.
Then there exists a separable closed subspace $Y_s \subset Y$ such that
the values $f(t,x)$ lie  in a separable closed subspace $Y_s$ of $Y$ for a.e. $t\in(a,b)$ and for all $x\in K$. We can consider the restriction of $f$ on $Y_s$: 
$f:(a,b)\times K \to Y_s$.
%\vskip 2 mm
Since $Y_s$ is separable,  the notions of measurability and strong measurability  are equivalent \cite[Lemma 11.36, p. 417]{AB}. The function $f:(a,b)\times K \to Y_s$ is a Carath\'edory function: $f(\cdot,x)$ is measurable on $(a,b)$ for all $x\in K$ and $f(t,\cdot)$ is continuous from $K$ into $Y_s$ for a.e. $t\in (a,b)$. Then $\tilde{f}_K$ maps $(a,b)$ into $C(K,Y_s)$ and is Borel measurable, since $K$ is a compact metric space and $Y_s$ a separable Banach space (cf. \cite[Theorem 4.54, p. 153]{AB}). The Banach space $C(K,Y_s)$ is separable, since $Y_s$ is separable \cite[Lemma 3.85, p. 120]{AB}, then $\tilde{f}_K$  is 
strong measurable on $(a,b)$. 
From {\bf (H1)}, we have 
\begin{equation*}
\forall t\in\R ,\qquad \norm{\tilde{f}_K(t)}_{C(K,Y)} \leq a \sup_{x\in K}\norm{x}^{\frac{p}{q}} + b(t)
\end{equation*}
with $\ds a \sup_{x\in K}\norm{x}^{\frac{p}{q}} + b(\cdot) \in S^q_{aa}(\R)\subset L^q_{loc}(\R)$, then
$\norm{\tilde{f}_K(\cdot)}_{C(K,Y_s)}\in L^q_{loc}(\R)$.
The function $\tilde{f}_K$ being strongly measurable on each bounded interval of $\R$ and $\norm{\tilde{f}_K(\cdot)}_{C(K,Y_s)}\in L^q_{loc}(\R)$, we have $\tilde{f}_K\in L^q_{loc}(\R,C(K,Y_s))$
\cite[Theorem 11.43, p. 420]{AB}.
By considering the standard isometry from $C(K,Y_s)$ to $C(K,Y)$, we obtain the result: 
$\tilde{f}_K\in L^q_{loc}(\R,C(K,Y))$.
\end{proof} 
%******************************************* 

\vskip 2 mm
In the periodic case, we can prove that Hypothesis {\bf (H3)} 
holds under  {\bf (H2)}, but we will not use this remark. 
For $T>0$, consider the subspace $L^p_T(\R,X)$ of $S_{ap}^p(\R,X)$ defined by
\begin{equation*}
L^p_T(\R,X)=\set{u\in L_{loc}^p(\R,X)}{u(t+T)=u(t)\text{ for a.e. }t\in\R} .
\end{equation*}

\vskip2mm
For a map $f:\R\times X\to Y$,  we formulate the following hypotheses.
\vskip2mm
\label{pa1}
{\bf (H4)} {\it There exist $a>0$ and $b\in L^q(0,T)$ such that $\norm{f(s,x)}\leq a\norm{x}^{\frac{p}{q}}+b(s)$, for all $x\in X$ and a.e.  $s\in (0,T)$. 
}
\vskip2mm
{\bf (H5)} {\it For all $x\in X$, $f(\cdot,x)\in L^q_{T}(\R,Y)$.}
\vskip2mm

%*******************************************
\begin{corollary}
%corollary 6.9
\label{cor3}
We assume that {\bf (H2), (H4)} and {\bf (H5)} hold.
Then the following assertions hold.
\vskip 2 mm
{\bf i)} The Nemytskii operator $\mathcal{N}_{f}$ maps $S_{aa}^p(\R,X)$ into $S_{aa}^q(\R,Y)$ and $\mathcal{N}_{f}$ is continuous.
\vskip 2 mm
{\bf ii)} The Nemytskii operator $\mathcal{N}_{f}$ maps $S_{ap}^p(\R,X)$ into $S_{ap}^q(\R,Y)$ and $\mathcal{N}_{f}$ is continuous.
\end{corollary}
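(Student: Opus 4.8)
The plan is to deduce Corollary \ref{cor3} from Theorem \ref{th11}. First I would promote the growth condition from $(0,T)$ to all of $\R$. By \textbf{(H5)} each $f(\cdot,x)$ is $T$-periodic, so let $b:\R\to[0,+\infty)$ be the $T$-periodic extension of the function appearing in \textbf{(H4)}; then $b\in L^q_T(\R)\subset S^q_{ap}(\R)\subset S^q_{aa}(\R)$ and $\norm{f(t,x)}\leq a\norm{x}^{\frac{p}{q}}+b(t)$ for all $x\in X$ and a.e.\ $t\in\R$, which is precisely \textbf{(H1)}. Likewise, for each $x\in X$ one has $f(\cdot,x)\in L^q_T(\R,Y)\subset S^q_{ap}(\R,Y)\subset S^q_{aa}(\R,Y)$, and in particular $f(\cdot,x)\in L^q_{loc}(\R,Y)$.

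Next, I would fix a compact set $K\subset X$ and show that the map $\tilde{f}_K$ defined by \eqref{eq63} belongs to $L^q_T(\R,C(K,Y))$. Since \textbf{(H1)}, \textbf{(H2)} hold and $f(\cdot,x)\in L^q_{loc}(\R,Y)$ for all $x$, Lemma \ref{lem6} already gives $\tilde{f}_K\in L^q_{loc}(\R,C(K,Y))$, so only the $T$-periodicity remains. Here I would pick a countable dense subset $D$ of the compact metric space $K$. For each $x\in D$ there is a null set $N_x\subset\R$ with $f(t+T,x)=f(t,x)$ for $t\notin N_x$, and by \textbf{(H2)} there is a null set $N_0$ off which $f(t,\cdot)$ is continuous on $K$. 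Setting $N=N_0\cup(N_0-T)\cup\bigcup_{x\in D}N_x$, a null set, for every $t\notin N$ the two continuous maps $f(t+T,\cdot)$ and $f(t,\cdot)$ on $K$ agree on the dense set $D$, hence coincide, i.e.\ $\tilde{f}_K(t+T)=\tilde{f}_K(t)$ in $C(K,Y)$. Therefore $\tilde{f}_K\in L^q_T(\R,C(K,Y))\subset S^q_{ap}(\R,C(K,Y))\subset S^q_{aa}(\R,C(K,Y))$.

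With \textbf{(H1)} in hand and $\tilde{f}_K\in S^q_{aa}(\R,C(K,Y))$ for every compact $K\subset X$, Theorem \ref{th11} i) yields that $\mathcal{N}_f$ maps $S^p_{aa}(\R,X)$ into $S^q_{aa}(\R,Y)$ and is continuous; since in fact $\tilde{f}_K\in S^q_{ap}(\R,C(K,Y))$, Theorem \ref{th11} ii) yields the corresponding statement from $S^p_{ap}(\R,X)$ to $S^q_{ap}(\R,Y)$. This establishes both i) and ii).

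I expect the main obstacle to be the $T$-periodicity of $\tilde{f}_K$: one must pass from the $x$-dependent almost-everywhere periodicity provided by \textbf{(H5)} to a single null set valid uniformly over $K$, which is exactly where separability of $K$ and \textbf{(H2)} enter; the extension of the growth bound and the $L^q_{loc}$-membership of $\tilde{f}_K$ (the latter already supplied by Lemma \ref{lem6}) are routine. An alternative route would verify \textbf{(H3)} directly in the periodic case and invoke Theorem \ref{th5}, but the passage through Theorem \ref{th11} is shorter.
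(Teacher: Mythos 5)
Your proof is correct and follows essentially the same route as the paper: extend $b$ periodically to get \textbf{(H1)}, use Lemma \ref{lem6} for $\tilde{f}_K\in L^q_{loc}(\R,C(K,Y))$, observe that $\tilde{f}_K$ is $T$-periodic so that $\tilde{f}_K\in L^q_T(\R,C(K,Y))\subset S^q_{ap}\subset S^q_{aa}$, and conclude by Theorem \ref{th11}. Your treatment of the $T$-periodicity of $\tilde{f}_K$ via a countable dense subset of $K$ is in fact more careful than the paper's, which simply asserts $\tilde{f}_K(t+T)=\tilde{f}_K(t)$ from \textbf{(H5)} without addressing that the exceptional null set there depends on $x$.
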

%*******************************************

%*******************************************
\begin{proof}
We proof i) and ii) together. 
For that we use Theorem \ref{th11}. 
By using {\bf (H4)} and the $T$-periodicity of the function $f(\cdot,x)$, Hypothesis {\bf (H1)} holds with the function $\ds t\mapsto b\left(\left\{\frac{t}{T}\right\}T\right)\in L_{T}^q(\R,Y)\subset S_{aa}^q(\R,Y)$. From {\bf (H5)}, we have $f(\cdot,x)\in L^q_{T}(\R,Y) \subset L_{loc}^q(\R,Y)$ for all $x\in X$, then by Lemma \ref{lem6}, we have $\tilde{f}_K\in L_{loc}^q(\R,C(K,Y))$ for all compact set $K\subset X$. 
Moreover by Hypothesis {\bf (H5)}, we have $\tilde{f}_K(t+T) = \tilde{f}_K(t)$ for all $t\in\R$: $\tilde{f}_K\in L_{T}^q(\R,C(K,Y)) \subset S_{ap}^q(\R,C(K,Y)) \subset S_{aa}^q(\R,C(K,Y))$. The conclusion results of Theorem \ref{th11}.
\end{proof}
%*******************************************

Now we give a direct consequence in the autonomous case of Corollary \ref{cor3}.

%*******************************************
\begin{corollary}
\label{cor10}
%corollary 6.10
Suppose that $f:X\to Y$ is a continuous map and there exist $a>0$ and $b>0$  such that
\begin{equation}
\label{eq50}
\forall x\in X ,\quad\norm{f(x)}\leq a\norm{x}^{\frac{p}{q}}+b .
\end{equation}
\ We consider the Nemytskii operator $\mathcal{N}_{f}$ defined by
$\mathcal{N}_{f}(u)=f\circ u$. Then the following assertions hold.
\vskip 2 mm
{\bf i)} The  Nemytskii operator $\mathcal{N}_{f}$ maps $S_{aa}^p(\R,X)$ into $S_{aa}^q(\R,Y)$  and $\mathcal{N}_{f}$ is continuous.
\vskip 2 mm
{\bf ii)} The  Nemytskii operator $\mathcal{N}_{f}$ maps $S_{ap}^p(\R,X)$ into $S_{ap}^q(\R,Y)$  and $\mathcal{N}_{f}$ is continuous.
\end{corollary}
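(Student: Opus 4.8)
The plan is to deduce Corollary \ref{cor10} from Corollary \ref{cor3} by regarding the autonomous map $f:X\to Y$ as a function on $\R\times X$ that does not depend on the first variable. Concretely, set $g:\R\times X\to Y$ by $g(t,x)=f(x)$ and fix any period, say $T=1$. The Nemytskii operator built on $g$ then coincides with the operator $\mathcal{N}_{f}$ of the statement, since $\mathcal{N}_{g}(u)(t)=f(u(t))=(f\circ u)(t)$, so it suffices to check that $g$ satisfies the hypotheses {\bf (H2)}, {\bf (H4)} and {\bf (H5)} of Corollary \ref{cor3}.

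First I would verify {\bf (H2)}: for every $t\in\R$ the map $g(t,\cdot)=f$ is continuous from $X$ into $Y$ by assumption. Next, {\bf (H4)}: taking the same constant $a>0$ as in \eqref{eq50} and the constant function $b(\cdot)\equiv b$, which belongs to $L^q(0,1)$ because the interval is bounded, one has $\norm{g(s,x)}=\norm{f(x)}\leq a\norm{x}^{\frac{p}{q}}+b(s)$ for all $x\in X$ and all $s\in(0,1)$. Finally, {\bf (H5)}: for each $x\in X$ the function $g(\cdot,x)$ is the constant $f(x)$, which is $1$-periodic and locally $q$-integrable, hence lies in $L^q_{1}(\R,Y)$. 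Once {\bf (H2)}, {\bf (H4)} and {\bf (H5)} are in place, Corollary \ref{cor3} applies and yields directly that $\mathcal{N}_{g}=\mathcal{N}_{f}$ maps $S_{aa}^p(\R,X)$ into $S_{aa}^q(\R,Y)$ continuously (assertion {\bf i)}) and maps $S_{ap}^p(\R,X)$ into $S_{ap}^q(\R,Y)$ continuously (assertion {\bf ii)}).

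I do not expect any real obstacle here; the only points requiring a word of care are that the growth bound \eqref{eq50} with a genuine constant $b$ translates into the correct form of {\bf (H4)} precisely because we localize to a bounded interval on which constants are $L^q$, and that constant functions qualify as admissible periodic elements of $L^q_{1}(\R,Y)$ and $L^q_{1}(\R,C(K,Y))$. An alternative, equally direct route is to apply Theorem \ref{th11}: here {\bf (H1)} holds with $b$ a constant (which lies in $S^q_{aa}(\R)$), and for every compact $K\subset X$ the map $\tilde f_K$ is the constant $f|_K\in C(K,Y)$, hence trivially in $S^q_{ap}(\R,C(K,Y))\subset S^q_{aa}(\R,C(K,Y))$; but routing through Corollary \ref{cor3} is the shortest path and is the one I would write up.
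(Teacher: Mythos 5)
Your proof is correct and is exactly the route the paper intends: the paper presents Corollary \ref{cor10} as a ``direct consequence in the autonomous case of Corollary \ref{cor3}'', i.e.\ one views $f$ as a $t$-independent (hence $T$-periodic) function and checks \textbf{(H2)}, \textbf{(H4)}, \textbf{(H5)} exactly as you do. The verifications (constant $b\in L^q(0,T)$, constant $g(\cdot,x)\in L^q_T(\R,Y)$) are all sound, so nothing is missing.
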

%*******************************************

%*******************************************
\begin{remark}
\label{rq5}
%remark 6.11
With the same assumptions as those of Corollary \ref{cor10}, Andres et al. \cite[Lemma 3.2]{An-Pe} have shown that $\mathcal{N}_{f}$ maps $S_{ap}^p(\R,X)$ into $S_{ap}^q(\R,Y)$. We will see in Corollary \ref{cor5} that in reflexive spaces, assumption \eqref{eq50} is also a necessary condition.
\end{remark}
%*******************************************

\vskip 2 mm
When the Banach spaces $X$ and $Y$ are separable, we can improve Corollary \ref{cor3}.

%*******************************************
\begin{corollary}
\label{cor5}
%corollary 6.12
We assume that  the Banach spaces $X$ and $Y$ are separable. 
We assume that {\bf (H2)} and {\bf (H5)} hold.
Then the following assertions hold.
\vskip 2 mm
{\bf i)}
The Nemytskii operator $\mathcal{N}_{f}$ maps $S_{aa}^p(\R,X)$ into $S_{aa}^q(\R,Y)$  if and only if {\bf (H4)} holds.
In this case $\mathcal{N}_{f}$ is continuous.
\vskip 2 mm
{\bf ii)}
The Nemytskii operator $\mathcal{N}_{f}$ maps $S_{ap}^p(\R,X)$ into $S_{ap}^q(\R,Y)$  if and only if {\bf (H4)} holds.
In this case $\mathcal{N}_{f}$ is continuous.
\end{corollary}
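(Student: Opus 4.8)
The plan is to prove i) and ii) simultaneously, since the almost periodic case is obtained from the almost automorphic one by systematically replacing $S^{\bullet}_{aa}$ with $S^{\bullet}_{ap}$; I therefore describe the almost automorphic version and comment on the difference at the end. The structural observation driving everything is that, under {\bf (H2)} and {\bf (H5)}, the restriction of $f$ to $(0,T)\times X$ is a Carath\'eodory function in the sense of Section \ref{iii}: {\bf (H5)} supplies the measurability of $f(\cdot,x)$ on $(0,T)$ for each $x\in X$, and {\bf (H2)} the continuity of $f(t,\cdot)$ for a.e.\ $t$. Consequently, once the periodic Nemytskii operator $N_f$ of \eqref{eq53} is known to map $L^p(0,T;X)$ into $L^q(0,T;Y)$, the separability of $X$ and $Y$ lets us invoke Theorem \ref{th6} of Lucchetti--Patrone, and that theorem carries the whole weight of the nontrivial implication (it simultaneously yields the growth bound and the continuity of $N_f$).

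For the implication ``{\bf (H4)} $\Longrightarrow$ the mapping property'' there is nothing to prove: {\bf (H2)}, {\bf (H4)} and {\bf (H5)} are exactly the hypotheses of Corollary \ref{cor3}, which gives that $\mathcal{N}_f$ maps $S^p_{aa}(\R,X)$ into $S^q_{aa}(\R,Y)$ (resp.\ $S^p_{ap}(\R,X)$ into $S^q_{ap}(\R,Y)$) and is continuous. This also settles the closing sentence ``In this case $\mathcal{N}_f$ is continuous'', since ``this case'' is precisely the one where {\bf (H4)} holds.

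For the converse I would argue as follows. Assume $\mathcal{N}_f$ maps $S^p_{aa}(\R,X)$ into $S^q_{aa}(\R,Y)$. Fix an arbitrary $\omega\in L^p(0,T;X)$ and let $u$ be its $T$-periodic extension; then $u\in L^p_T(\R,X)\subset S^p_{ap}(\R,X)\subset S^p_{aa}(\R,X)$ and $u=\omega$ a.e.\ on $(0,T)$. By the assumed mapping property, $\mathcal{N}_f(u)\in S^q_{aa}(\R,Y)\subset L^q_{loc}(\R,Y)$, and since $\mathcal{N}_f(u)(t)=f(t,u(t))=f(t,\omega(t))=N_f(\omega)(t)$ for a.e.\ $t\in(0,T)$, we get $N_f(\omega)\in L^q(0,T;Y)$. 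As $\omega$ was arbitrary, $N_f$ maps $L^p(0,T;X)$ into $L^q(0,T;Y)$, and Theorem \ref{th6} then forces the existence of $a>0$ and $b\in L^q(0,T)$ with $\norm{f(t,x)}\leq a\norm{x}^{\frac{p}{q}}+b(t)$ for all $x\in X$ and a.e.\ $t\in(0,T)$, i.e.\ {\bf (H4)}. The almost periodic case is verbatim, using that $u$ already lies in $S^p_{ap}(\R,X)$ and hence $\mathcal{N}_f(u)\in S^q_{ap}(\R,Y)\subset L^q_{loc}(\R,Y)$.

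I do not anticipate a serious obstacle. The only point requiring a little care is the ``descent'' from the global Stepanov statement to the statement on $(0,T)$, and it is harmless: the $T$-periodic extension of any element of $L^p(0,T;X)$ automatically belongs to $S^p_{ap}(\R,X)$, and restricting a function of $S^q_{aa}(\R,Y)$ to $(0,T)$ automatically lands in $L^q(0,T;Y)$, so nothing about the fine almost periodic/automorphic structure of $\mathcal{N}_f(u)$ is needed in the converse --- only local $q$-integrability. The separability hypotheses on $X$ and $Y$ enter exactly once, namely to make Theorem \ref{th6} applicable.
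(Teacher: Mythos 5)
Your proof is correct and follows essentially the same route as the paper: sufficiency via Corollary \ref{cor3}, and necessity by identifying $L^p(0,T;X)$ with the $T$-periodic functions $L^p_T(\R,X)\subset S^p_{ap}(\R,X)$, observing that the restriction of $\mathcal{N}_f(u)$ to $(0,T)$ lies in $L^q(0,T;Y)$, and then invoking the Lucchetti--Patrone Theorem \ref{th6} (which is where separability enters). The paper merely packages the same idea more formally through the isometries $J_p$, $J_q$ and the induced operator $g=(J_q)^{-1}\circ H_1\circ J_p$.
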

%*******************************************

%*******************************************
\begin{proof} 
We proof i) and ii) together. When $E$ is a Banach space, $S^p(\R,E)$ denotes 
indifferently
$S^p_{aa}(\R,E)$ or $S^p_{ap}(\R,E)$. 
By using Corollary \ref{cor3}, it suffices to state that
\begin{equation}
\label{eq62}
\text{if } \mathcal{N}_{f} \text{ defined by \eqref{eq45} maps } S^p(\R,X) \text{ into }S^q(\R,Y)\text{, then  {\bf (H4)} holds.}
\end{equation}
Consider the restriction map  $H:L^p_T(\R,X)\to BS^q(\R,Y)$ of $\mathcal{N}_{f}$ defined by $H(u)=\mathcal{N}_{f}(u)$ for $u\in L^p_T(\R,X)$.  
Remark that $H(u) = \mathcal{N}_{f}(u) = f(\cdot , u(\cdot))$ is $T$-periodic  for $u\in L^p_T(\R,X)$, then $\ds H(L^p_T(\R,X)) \subset L^q_T(\R,Y)$. Then the map $H_1:L^p_T(\R,X)\to L^q_T(\R,Y)$ with $H_1(u) = H(u)$ for $u\in L^p_T(\R,X)$ is well-defined. 
Consider the following isometries
\begin{equation*}
J_p:L^p(0,T;X)\to L^p_T(\R,X) \text{ with } J_p\omega(t)=\omega\left(\left\{\frac{t}{T}\right\}T\right) \text{ for } \omega\in L^p(0,1;X) \text{ and } t\in\R ,
\end{equation*}
\begin{equation*}
J_q:L^q(0,T;Y)\to L^q_T(\R,Y) \text{ with } J_p\omega(t)=\omega\left(\left\{\frac{t}{T}\right\}T\right) \text{ for } \omega\in L^q(0,1;Y) \text{ and } t\in\R .
\end{equation*}
Obviously $J_q$ is surjective and $\ds (J_q)^{-1}u(s) = u(s)$ for $s\in(0,T)$.
Then the map 
$$g= (J_q)^{-1} \circ H_1 \circ J_p:L^p(0,1;X)\to L^q(0,1;Y)$$
%\vskip2mm
 %%%%%%%%%%%%%%%%%%%%%%%%%
 $$
\xymatrix{
L^p_T(\R,X) \ar[r]^{H_1}  & L^q_T(\R,Y) \ar[d]^{(J_q)^{-1}} \\
L^p(0,T;X) \ar[r]^{g} \ar[u]^{J_p} & L^q(0,T;Y)  \\
}
$$
%%%%%%%%%%%%%%%%%%%%%%%%%
%\vskip 2 mm
is well-defined. Moreover, we have $g(\omega)(s)=f(s,\omega(s))$ for $s\in(0,T)$.
We obtain {\bf (H4)} from Theorem \ref{th6} applied to the restriction of the function $f$ on $(0,T)\times X$.
Then \eqref{eq62} holds and the claim is proved.
\end{proof} 
%*******************************************

An immediate consequence of Corollary \ref{cor5} is the following result.

%*******************************************
\begin{corollary}
\label{cor12}
%corollary 6.13
We assume that  the Banach spaces $X$ and $Y$ are separable.
For a  continuous map $f:X\to Y$, we consider the Nemytskii operator $\mathcal{N}_{f}$ defined by
$\mathcal{N}_{f}(u)=f\circ u$.
Then  the following assertions hold.
\vskip 2 mm
{\bf i)} The  Nemytskii operator $\mathcal{N}_{f}$ maps $S_{aa}^p(\R,X)$ into $S_{aa}^q(\R,Y)$  if and only if \eqref{eq50} holds. In this case $\mathcal{N}_{f}$ is continuous.
\vskip 2 mm
{\bf ii)} The  Nemytskii operator $\mathcal{N}_{f}$ maps $S_{ap}^p(\R,X)$ into $S_{ap}^q(\R,Y)$  if and only \eqref{eq50} holds.
In this case $\mathcal{N}_{f}$  is continuous.
\end{corollary}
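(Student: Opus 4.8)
The plan is to derive Corollary \ref{cor12} directly from Corollary \ref{cor5} by regarding the autonomous map $f:X\to Y$ as the non-autonomous map $\widetilde f:\R\times X\to Y$, $\widetilde f(t,x)=f(x)$, and fixing $T=1$. First I would observe that $\widetilde f$ satisfies hypotheses {\bf (H2)} and {\bf (H5)} with no further assumption: {\bf (H2)} holds because $\widetilde f(t,\cdot)=f$ is continuous for every $t\in\R$; and {\bf (H5)} holds because, for each fixed $x\in X$, the function $t\mapsto\widetilde f(t,x)=f(x)$ is constant, hence $1$-periodic and in $L^q_{loc}(\R,Y)$, so it lies in $L^q_1(\R,Y)$. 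Since $X$ and $Y$ are separable, Corollary \ref{cor5} then applies to $\widetilde f$ and yields: $\mathcal{N}_f=\mathcal{N}_{\widetilde f}$ maps $S_{aa}^p(\R,X)$ into $S_{aa}^q(\R,Y)$ (resp.\ $S_{ap}^p(\R,X)$ into $S_{ap}^q(\R,Y)$) if and only if {\bf (H4)} holds for $\widetilde f$, in which case $\mathcal{N}_f$ is continuous.

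It therefore remains to verify that, for the autonomous map $\widetilde f$, hypothesis {\bf (H4)} is equivalent to the growth condition \eqref{eq50}. The implication \eqref{eq50}~$\Rightarrow$~{\bf (H4)} is immediate: if $\norm{f(x)}\le a\norm{x}^{\frac{p}{q}}+b$ for all $x\in X$ with $a,b>0$, one simply takes the constant function $b(\cdot)\equiv b\in L^q(0,1)$. For the converse, assume {\bf (H4)}, so that there are $a>0$ and $b\in L^q(0,1)$ with $\norm{f(x)}\le a\norm{x}^{\frac{p}{q}}+b(s)$ for every $x\in X$ and a.e.\ $s\in(0,1)$. Here I would use the separability of $X$: fix a countable dense set $D\subset X$; for each $x\in D$ the inequality fails only on a null set $N_x\subset(0,1)$, hence $\bigcup_{x\in D}N_x$, together with the null set on which $b$ is not finite-valued, is still null, and we may pick $s_0\in(0,1)$ outside it. Then $\norm{f(x)}\le a\norm{x}^{\frac{p}{q}}+b(s_0)$ for all $x\in D$, and since both sides depend continuously on $x$ and $D$ is dense, the inequality extends to all $x\in X$; replacing $b(s_0)$ by $b_0:=\abs{b(s_0)}+1>0$ gives \eqref{eq50}.

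The argument is essentially bookkeeping, and I do not expect a genuine obstacle: this is precisely why the statement is announced as an immediate consequence of Corollary \ref{cor5}. The only substantive point is the last step, passing from an $L^q$ bound $b(s)$ to a constant bound, where the separability of $X$ and the continuity of $f$ are really used (one a.e.-value $s_0$ of $b$ works on a dense set of $x$, hence on all of $X$ by continuity). For completeness one may also note that the \emph{if} direction alone does not need separability, being already covered by Corollary \ref{cor10}.
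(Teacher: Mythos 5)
Your proof is correct and follows exactly the route the paper intends: the paper gives no explicit argument, stating only that Corollary \ref{cor12} is an immediate consequence of Corollary \ref{cor5}, and your verification of {\bf (H2)}, {\bf (H5)} and of the equivalence {\bf (H4)} $\Leftrightarrow$ \eqref{eq50} for the autonomous map supplies precisely the intended bookkeeping. (A minor simplification for the converse step: for each fixed $x$ the number $\norm{f(x)}-a\norm{x}^{\frac{p}{q}}$ is an essential lower bound for $b$ on $(0,T)$, so one may take $b_0=\max(\mathrm{ess}\inf b,\,1)$ directly, without invoking separability or the continuity of $f$ at that point.)
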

%*******************************************

%%%%%%%%%%%%%%%%%%%%%%%%%%%
\subsection{Proof of Theorem \ref{th5} and Theorem \ref{th11}}
\label{vi-ii}
%%%%%%%%%%%%%%%%%%%%%%%%%%%

%%%%%%%%%%%%%%%%%%%%%%%%%%%
\subsubsection{Proof of Theorem \ref{th5}}
%%%%%%%%%%%%%%%%%%%%%%%%%%%

For that we use the following lemma.

%*******************************************
\begin{lemma}
%lemma 6.14
\label{lem2}
Suppose that {\bf (H2)-(H3)} hold. Assume that $f(\cdot,x)\in BS^q(\R,Y)$ for all $x\in X$.
Let $(\omega_k)_{k\geq0}$ be a sequence in $L^p(0,1;X)$. If $\omega_k\to\omega$ in $L^p(0,1;X)$ as $k\to+\infty$, then for each $\eps>0$,
$$\lim_{k\to\infty}\sup_{n\in\Z}
\Big(\rm{meas}\left( \set{\theta\in (0,1)}{\norm{f(n+\theta,\omega_k(\theta)) - f(n+\theta,\omega(\theta))}>\eps}\right) \Big)=0 .$$
\end{lemma}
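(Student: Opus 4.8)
The plan is to reduce the statement to a measure-convergence fact on the unit interval $(0,1)$, uniformly in the integer shift $n$, and to exploit Hypothesis \textbf{(H3)} to absorb the ``bad'' set where $f(n+\cdot,\cdot)$ fails to be uniformly continuous. First I would fix $\eps>0$ and a compact set $K\subset X$ that contains (a neighbourhood of) the ranges of all $\omega_k$ and of $\omega$ in the relevant sense; more precisely, since $\omega_k\to\omega$ in $L^p(0,1;X)$, up to passing to a subsequence we get a.e.\ convergence dominated by some $g\in L^p(0,1)$, but to get the \emph{full}-sequence conclusion I would argue by contradiction: if the conclusion fails, there is a subsequence along which the $\sup_{n}\mathrm{meas}(\cdots)$ stays bounded below by some $\eta>0$, and then work with that subsequence.

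The key steps, in order: (1) Decompose, for each $n$ and $k$,
\[
\set{\theta\in(0,1)}{\norm{f(n+\theta,\omega_k(\theta))-f(n+\theta,\omega(\theta))}>\eps}
\subset A_k \cup B_n \cup C_{n,k},
\]
where $A_k=\set{\theta}{\norm{\omega_k(\theta)-\omega(\theta)}>\delta}$ (small measure by Tchebychev once $\omega_k\to\omega$ in $L^p$, \emph{uniformly in $n$} since $A_k$ does not depend on $n$), $B_n$ is the ``bad time'' set coming from \textbf{(H3)} shifted by $n$, i.e.\ $B_n=(N-n)\cap(0,1)$ with $N$ the measurable set furnished by \textbf{(H3)} for the compact set $\overline{K}$ (its measure is $<r$ uniformly in $n$ by the defining property $\sup_t\mathrm{meas}((t,t+1)\cap N)<r$), and $C_{n,k}$ is contained in the set where $\omega_k(\theta),\omega(\theta)\in K$, $\norm{\omega_k(\theta)-\omega(\theta)}\le\delta$ and $n+\theta\notin N$; on $C_{n,k}$, \textbf{(H3)} gives $\norm{f(n+\theta,\omega_k(\theta))-f(n+\theta,\omega(\theta))}\le\eps$, so $C_{n,k}=\emptyset$ provided $\delta$ is chosen first according to \textbf{(H3)} applied to $\overline{K}$ and $\eps$. (2) Then $\mathrm{meas}$ of the left side is $\le\mathrm{meas}(A_k)+r$; choosing $r$ arbitrarily small first (this fixes $N$, hence $\delta$), and then letting $k\to\infty$ to kill $\mathrm{meas}(A_k)$, yields the claim. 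The point is the order of quantifiers: $r$ is chosen before $k$, $\delta$ is determined by $r$ (via $N$) and $\eps$, and $A_k$ is independent of $n$, so the bound on the left side is uniform in $n\in\Z$.

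The one genuine subtlety — and the step I expect to be the main obstacle — is producing the compact set $K$ that simultaneously captures the values $\omega_k(\theta)$ for \emph{all} large $k$ and a.e.\ $\theta$. A single function $\omega\in L^p(0,1;X)$ need not have relatively compact range, so $\overline{\{\omega(\theta)\}}$ is not compact in general. The fix is to apply \textbf{(H3)} not to a literal compact set but to approximate: by Egorov/Lusin-type reasoning, or more simply, since $\mathcal{E}(0,1;X)$ is dense in $L^p(0,1;X)$, one first handles simple functions $\omega_k,\omega$ (whose ranges are finite, hence in a compact set), getting the estimate there, and then passes to the general case by the $L^p$-density argument together with the elementary inequality $\mathrm{meas}(\{\norm{a+b}>\eps\})\le\mathrm{meas}(\{\norm{a}>\eps/2\})+\mathrm{meas}(\{\norm{b}>\eps/2\})$ and Tchebychev to control the contribution of the $L^p$-small perturbations uniformly in $n$; here the growth bound \textbf{(H1)}/\textbf{(H2)} and $f(\cdot,x)\in BS^q(\R,Y)$ are what make the perturbation terms $f(n+\theta,\omega(\theta))-f(n+\theta,\tilde\omega(\theta))$ controllable in measure uniformly in $n$, using the continuity \textbf{(H2)} and the uniform-continuity-off-a-small-set property \textbf{(H3)} again. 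Assembling these pieces in the right order gives the lemma.

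Finally I would record explicitly that the convergence is for the full sequence: the contradiction/subsequence device at the start ensures that the estimate $\limsup_k \sup_n\mathrm{meas}(\cdots)\le r$, valid for every $r>0$, forces $\lim_k\sup_n\mathrm{meas}(\cdots)=0$, which is the asserted conclusion.
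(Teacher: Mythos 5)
Your decomposition is essentially the one the paper uses (a Tchebychev set where $\norm{\omega_k-\omega}>\delta$, a shifted bad set from \textbf{(H3)}, and a set where \textbf{(H3)} forces the difference to be $\le\eps$), and you correctly identify the real difficulty: producing a compact set $K\subset X$ that captures the values of $\omega$ and of all the $\omega_k$. But your proposed fix for that difficulty does not close. Reducing to simple functions and then controlling the perturbation terms $f(n+\theta,\omega(\theta))-f(n+\theta,\tilde\omega(\theta))$ ``in measure uniformly in $n$'' is exactly a statement of the same type as the lemma itself: to get it from \textbf{(H3)} you again need a compact set containing the values of both $\omega$ and its simple approximant $\tilde\omega$, and the range of $\omega$ is the thing you cannot control. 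Hypothesis \textbf{(H2)} only gives pointwise continuity in $x$, with no uniformity in $t$ off a small set unless you already have the compact set in hand. So the approximation step is circular, and as written your covering $A_k\cup B_n\cup C_{n,k}$ also silently omits the set where $\omega_k(\theta)$ or $\omega(\theta)$ escapes $K$, which is nonempty in general.

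The missing ingredient is tightness of relatively compact subsets of $L^p(0,1;X)$: since $\omega_k\to\omega$, the set $\mathcal{H}=\set{\omega_k}{k\in\N}\cup\{\omega\}$ is compact in $L^p(0,1;X)$, hence tight (the paper invokes \cite[Corollary 3.3]{Di-Ma}), so for any $r>0$ there is a \emph{single} compact $K\subset X$ with $\mathrm{meas}\left(\set{\theta}{\omega_k(\theta)\notin K}\right)<r/4$ for every $k$ and likewise for $\omega$. One then adds to your decomposition the (uniformly small) set where $\omega_k(\theta)\notin K$ or $\omega(\theta)\notin K$, applies \textbf{(H3)} to this $K$ to get $\delta_*$ and $N$ with $\sup_n\mathrm{meas}\left((n,n+1)\cap N\right)<r/4$, and finishes exactly as you describe, with no subsequence/contradiction device needed. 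Everything else in your outline (the order of quantifiers, the uniformity in $n$ of the Tchebychev set, the translation invariance giving the uniform bound on $(0,1)\setminus C^n_{\delta_*}$) is correct; also note that \textbf{(H1)} is not among the hypotheses of this lemma and is not needed here.
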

%*******************************************

%*******************************************
\begin{proof}
We fix $\eps>0$. Let us denote by
$$A^n_{k}=\set{\theta\in(0,1)}{\norm{f(n+\theta,\omega_k(\theta)) - f(n+\theta,\omega(\theta))}\leq\eps} , \quad k\in\N , \quad n\in\Z .$$
It is a Lebesgue measurable subset of $(0,1)$, since $f(n+\cdot,x)$ is strongly measurable on $(0,1)$ and $f(n+\theta,\cdot)$ is continuous on $X$.
We want to show that for a given $r>0$ there exists $k_0\in\N$ such that 
\begin{equation}
\label{eq24}
k\geq k_0\implies\sup_{n\in\Z}\Big(\rm{meas}\left((0,1)\right)\setminus A^n_k\Big)<r .
\end{equation}
The set $\mathcal{H}=\set{\omega_k}{k\in\N}\cup\{\omega\}$ is a compact subset of $L^p(0,1;X)$, since $\omega_k\to\omega$ in $L^p(0,1;X)$ as $k\to+\infty$. It follows that $\mathcal{H}$ is tight (see e.g., \cite[Corollary 3.3]{Di-Ma}), then there exists a compact set $K\subset X$ such that
$$\rm{meas}(\set{\theta\in(0,1)}{\omega(\theta)\notin K})<\frac{r}{4}$$
and
$$\forall k\in\N,\quad\rm{meas}(\set{\theta\in(0,1)}{\omega_k(\theta)\notin K})<\frac{r}{4} .$$
If we denote by
$$B_{k}=\set{\theta\in(0,1)}{\omega_k(\theta)\in K \text{ and } \omega(\theta)\in K} , \quad k\in\N ,$$
we have 
\begin{equation}
\label{eq25}
\rm{meas}((0,1)\setminus B_k)<\frac{r}{2}\quad\text{for all }k\in\N .
\end{equation}
If we denote by
$$C_{\delta}=\set{t\in\R}{x_1, x_2\in K , \,\norm{x_1-x_2}\leq \delta \implies\norm{f(t,x_1)-f(t,x_2)}\leq \eps}$$
and $$C^n_{\delta}=\set{\theta\in(0,1)}{x_1, x_2\in K , \,\norm{x_1-x_2}\leq \delta \implies\norm{f(n+\theta,x_1)-f(n+\theta,x_2)}\leq \eps}$$
for $\delta>0$ and $n\in\Z$, we have $\ds (n,n+1)\cap C_{\delta} = n+C^n_{\delta}$.
By using Hypothesis {\bf (H3)}, there exist $\delta_*>0$ and a measurable set $N\subset \R$ such that 
$\ds\sup_{n\in\Z}\Big(\rm{meas}\left((n,n+1)\cap N\right)\Big)<\frac{r}{4}$ and
$\R\setminus N \subset C_{\delta_*}$.
We deduce that $\ds (n,n+1)\setminus C_{\delta_*} \subset (n,n+1)\cap N$ and it follows
$\ds\sup_{n\in\Z}\Big(\rm{meas}\left((n,n+1)\setminus C_{\delta_*}\right)\Big)<\frac{r}{4}$.
From $\ds (n,n+1)\setminus C_{\delta_*} = n+ \left((0,1)\setminus C_{\delta_*}^n\right)$
and by using the invariance by translation of the Lebesgue's measure on $\R$, we obtain
\begin{equation}
\label{eq26}
\sup_{n\in\Z}\Big(\rm{meas}\left((0,1)\setminus C_{\delta_*}^n\right)\Big)<\frac{r}{4} .
\end{equation}
Let us denote by
$$D_{k}=\set{\theta\in(0,1)}{\norm{\omega_k(\theta) - \omega(\theta)}\leq\delta_*} , \quad k\in\N .$$
Then there exists $k_0\in\N$ such that 
\begin{equation}
\label{eq27}
k\geq k_0\implies        \rm{meas}\left((0,1)\setminus D_k\right)<\frac{r}{4} ,
\end{equation}
since $\omega_k\to\omega$ in $L^p(0,1;X)$ as $k\to+\infty$.
\vskip 2 mm
   We have $B_k\cap C^n_{\delta_*}\cap D_k\subset A^n_k$, then
from \eqref{eq25}-\eqref{eq27} we deduce that \eqref{eq24} holds and the claim is proved.
\end{proof}
%*****************************************

%*******************************************
\vskip    2mm \noindent\textit{{\bf Proof of Theorem \ref{th5}.}}
{\bf i)}
For that, we use Theorem \ref{th2}, by proving that the function $H:L_1^p(\R,X)\to BS^q\R,Y)$ defined by
$H(u)(t) = f(t,u(t))$ for $t\in\R$
is well-defined and continuous. 
For $u\in L_1^p(0,1;X)$, from Hypothesis {\bf (H1)} we have  
$$\sup_{t\in\R}\left(\int_{0}^{1}\norm{f(t+\theta,u(t+\theta)}^q\,d\theta\right)^{\frac{1}{q}} \leq a\norm{u}^{\frac{p}{q}}_{S^p} + \norm{b}_{S^q}<+\infty ,$$
then the function $H$ is well-defined. Now we state that $H$ is continuous.
For that we consider a sequence $(u_k)_{k\geq0}$ in $L_1^p(\R,X)$ such that $u_k\to u$ in $L_1^p(\R,X)$ as $k\to+\infty$. 
We want to show $H(u_k)\to H(u)$ in $BS^q(\R,Y)$ as $k\to+\infty$, for that we use Proposition \ref{prop2}.
From Lemma \ref{lem2}, we have
$$\forall\eps>0,\quad\lim_{k\to\infty}\sup_{n\in\Z}
\Big(\rm{meas}\left( \set{\theta\in (0,1)}{\norm{f(n+\theta,u_k(\theta)) - f(n+\theta,u(\theta))}>\eps}\right) \Big)=0 .$$
By using the invariance by translation of the Lebesgue's measure on $\R$ and the fact that $u_k$ and $u$ are $1$-periodic, we deduce that
$$\forall\eps>0,\quad\lim_{k\to\infty}\sup_{n\in\Z}
\Big(\rm{meas}\left( \set{s\in (n,n+1)}{\norm{H(u_k)(s) - H(u)(s)}>\eps}\right) \Big)=0 .$$
Moreover for a measurable set $E\subset\R$, we have  
$$\sup_{t\in\R}\Big(\rm{meas}\left( E\cap(t,t+1)\right)\Big) \leq 2 \sup_{n\in\Z}\Big(\rm{meas}\left( E\cap(n,n+1)\right)\Big) ,$$
then
\begin{equation}
\label{eq17}
\forall\eps>0,\quad\lim_{k\to\infty}\sup_{t\in\R}
\Big(\rm{meas}\left( \set{s\in (t,t+1)}{\norm{H(u_k)(s) - H(u)(s)}>\eps}\right) \Big)=0 .
\end{equation}
From Hypothesis {\bf (H1)} it follows that for  a measurable set $E\subset (t,t+1)$  
\begin{equation}
\label{eq18}
\left(\int_E\norm{H(u_k)(s)}^q \,ds \right)^{\frac{1}{q}} \leq a\left(\int_E\norm{u_k(s)}^p \,ds \right)^{\frac{1}{q}} + \left(\int_E\norm{b(s)}^q \,ds \right)^{\frac{1}{q}}  .
\end{equation}
The set $\set{u_k}{k\in\N}$ is relatively compact in $S_{aa}^p(\R,X)$, since $u_k\to u$ in $L_1^p(\R,X)\subset S_{aa}^p(\R,X)$ as $k\to+\infty$.  Moreover the finite set $\{b\}$ is compact in $S_{aa}^p(\R,X)$. Then by using Proposition \ref{prop3}, the sets $\set{u_k}{k\in\N}$ and $\{b\}$ are Stepanov $q$-uniformly integrable.
From \eqref{eq18}, we deduce that the subset  $\set{H(u_k)}{k\in\N}$ of $BS^q(\R,Y)$ is Stepanov $q$-uniformly integrable.
By using   \eqref{eq17} and Proposition \ref{prop2},
we deduce that $H(u_k) \to H(u)$ in $BS^q(\R,Y)$ as $k\to+\infty$.
\vskip 2 mm
{\bf ii)} The proof is similar to i) by using Theorem \ref{th3}.
%\end{proof}
\hfill$\Box$
%*******************************************

%%%%%%%%%%%%%%%%%%%%%%%%%%%
\subsubsection{Proof of Theorem \ref{th11}}
%%%%%%%%%%%%%%%%%%%%%%%%%%%

For that we use the following lemma.

%*******************************************
\begin{lemma}
%lemma 6.15
\label{lem7}
Let $u:\R\to X$ be a map.
\vskip 2 mm
{\bf i)} If $v\in AP(\R,Y)$ and  $u$, $v$ satisfy
\begin{equation}
\label{eq19}
\forall\eps>0,\, \exists M>0,\,\forall t_1, t_2\in\R, \quad \norm{u(t_1) - u(t_2)}_X \leq \frac{\eps}{2} + M \norm{v(t_1) - v(t_2)}_Y ,
\end{equation}
then $u\in AP(\R,X)$. 
\vskip 2 mm
{\bf ii)} If $v\in AA(\R,Y)$ and $u$, $v$ satisfy \eqref{eq19}, then $u\in AA(\R,X)$. 
\end{lemma}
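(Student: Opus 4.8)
The plan is to prove the two parts in parallel, since the structure is identical: in both cases we start from an almost period (resp.\ an almost automorphy sequence) of $v$ and show, via the quantitative estimate \eqref{eq19}, that it transfers to $u$. First I would prove part {\bf i)}. Fix $\eps>0$ and let $M=M(\eps)>0$ be the constant provided by \eqref{eq19} for this $\eps$. Since $v\in AP(\R,Y)$, the set $\mathcal{P}(v,\eps/(2M))$ of $(\eps/(2M))$-almost periods of $v$ is relatively dense in $\R$. Let $\tau\in\mathcal{P}(v,\eps/(2M))$. Then for every $t\in\R$, applying \eqref{eq19} with $t_1=t+\tau$ and $t_2=t$ gives
\begin{equation*}
\norm{u(t+\tau)-u(t)}_X \leq \frac{\eps}{2} + M\,\norm{v(t+\tau)-v(t)}_Y \leq \frac{\eps}{2} + M\cdot\frac{\eps}{2M} = \eps .
\end{equation*}
Hence $\tau\in\mathcal{P}(u,\eps)$, so $\mathcal{P}(v,\eps/(2M))\subset\mathcal{P}(u,\eps)$, and the latter is therefore relatively dense. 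It remains to observe that $u$ is continuous: given $\eps>0$ and $t_0\in\R$, by continuity of $v$ there is $\eta>0$ with $\norm{v(t)-v(t_0)}_Y\leq\eps/(2M)$ for $\abs{t-t_0}\leq\eta$, and then \eqref{eq19} yields $\norm{u(t)-u(t_0)}_X\leq\eps$. Thus $u\in AP(\R,X)$.

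For part {\bf ii)}, let $(t_k')_{k\in\N}$ be a sequence of reals. Since $v\in AA(\R,Y)$, extract a subsequence $(t_k)_{k\in\N}$ and a function $w\in L^\infty(\R,Y)$ with $\lim_k v(t+t_k)=w(t)$ and $\lim_k w(t-t_k)=v(t)$ for all $t\in\R$. I would first show that $(u(t+t_k))_k$ is Cauchy for each fixed $t$: given $\eps>0$, pick $M=M(\eps)$ from \eqref{eq19}; since $(v(t+t_k))_k$ converges it is Cauchy, so there is $k_0$ with $\norm{v(t+t_k)-v(t+t_\ell)}_Y\leq\eps/(2M)$ for $k,\ell\geq k_0$, and then \eqref{eq19} with $t_1=t+t_k$, $t_2=t+t_\ell$ gives $\norm{u(t+t_k)-u(t+t_\ell)}_X\leq\eps$. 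By completeness of $X$ the limit $g(t):=\lim_k u(t+t_k)$ exists for every $t\in\R$, defining a function $g:\R\to X$; note that passing to the limit in \eqref{eq19} (with $t_1,t_2$ replaced by $t_1+t_k,t_2+t_k$) shows that $g$ and $w$ satisfy the analogue of \eqref{eq19} with $\eps$ replaced by $\eps/2$, in particular $g$ is bounded, i.e.\ $g\in L^\infty(\R,X)$.

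Next I would run the symmetric argument to recover $u$ from $g$: for fixed $t\in\R$ and $\eps>0$, using \eqref{eq19} with $t_1=t-t_k$, $t_2=t$ together with $\lim_k w(t-t_k)=v(t)$ and the relation between $g$ and $w$, one shows $\lim_k g(t-t_k)=u(t)$. More precisely, for $k$ large, $\norm{w(t-t_k)-v(t)}_Y$ is small, hence by the $(\eps/2)$-version of \eqref{eq19} satisfied by $g$ and $w$ we get $\norm{g(t-t_k)-\big(\lim_\ell g(t)\text{-type limit}\big)}$ small; a clean way is to apply \eqref{eq19} directly to $u$ and $v$ with $t_1=t$, $t_2=t-t_k+t_\ell$, let $\ell\to\infty$ to replace $u(t-t_k+t_\ell)$ by $g(t-t_k)$ and $v(t-t_k+t_\ell)$ by $w(t-t_k)$, obtaining $\norm{u(t)-g(t-t_k)}_X\leq \eps/2 + M\norm{v(t)-w(t-t_k)}_Y\to\eps/2$, whence the limit is $u(t)$ since $\eps$ is arbitrary. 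Combining $\lim_k u(t+t_k)=g(t)$ and $\lim_k g(t-t_k)=u(t)$ shows $u\in AA(\R,X)$. The main obstacle is the bookkeeping in part {\bf ii)}: \eqref{eq19} only gives an estimate up to an additive $\eps/2$, so one cannot literally "pass to the limit'' inside \eqref{eq19} in one stroke — one has to first build $g$ as a pointwise limit, verify it inherits a slightly weaker version of \eqref{eq19}, and then carefully interleave the two sequential limits ($t_\ell\to\infty$ to produce $g$, $t_k\to\infty$ for the almost automorphy), making sure the $\eps/2$ slack does not accumulate.
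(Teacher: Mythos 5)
Your proposal is correct and follows essentially the same route as the paper: in part i) the inclusion $\mathcal{P}(v,\eps/(2M))\subset\mathcal{P}(u,\eps)$, and in part ii) a Cauchy argument to build the pointwise limit $g$ followed by passing to the limit in \eqref{eq19} and letting the arbitrary $\eps$ absorb the $\eps/2$ slack. The only cosmetic difference is that the paper records once at the outset that \eqref{eq19} transfers continuity from $v$ to $u$ (needed in part ii) as well, since $AA(\R,X)\subset BC(\R,X)$), whereas you only mention continuity in part i).
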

%*******************************************

%*******************************************
\begin{proof} 
From \eqref{eq19}, we deduce that if $v$ is continuous, then $u$ is also one.
\vskip2mm
{\bf i)} For $u\in C(\R,X)$, let us denote by $\ds\mathcal{P}(u,\eps)$ the set of $\eps$-almost periods of $u$. From \eqref{eq19}, we deduce that $\ds \mathcal{P}(v,\frac{\eps}{2M}) \subset \mathcal{P}(u,\eps)$, then i) holds.
\vskip2mm
{\bf ii)} By assumption, $v\in AA(\R,Y)$, then for all sequence of real numbers $(t_{n}^{\prime})_{n\in\N}$ admits  a subsequence  denoted by $(t_{n})_{n\in\N}$ such that
\begin{equation}
\label{eq59}
\forall t\in\R,\quad \underset{n\rightarrow\infty}{\lim}v(t+t_{n})  = v_*(t)\quad\text{and}\quad
\underset{k\rightarrow\infty}{\lim}v_*(t-t_{n})=v(t) .
\end{equation}
From  \eqref{eq19}  for all $\eps>0$, there exists $M>0$ such that
\begin{equation}
\label{eq71}
\forall t\in\R,\quad\norm{u(t+t_n) - u(t+t_m)}_X \leq \frac{\eps}{2} + M \norm{v(t+t_n) - v(t+t_m)}_Y .
\end{equation}
We deduce that $(u(t+t_n))_{n\in\N}$ is a sequence of Cauchy, then there exists $u_*$ such that 
\begin{equation}
\label{eq58}
\forall t\in\R,\qquad \underset{n\rightarrow\infty}{\lim}u(t+t_{n})  = u_*(t) .
\end{equation}
We fix $\eps>0$. By passing to the limit as $m\to\infty$ on \eqref{eq71}, we obtain
\begin{equation*}
\forall t\in\R,\quad\norm{u(t+t_n) - u_*(t)}_X \leq \frac{\eps}{2} + M \norm{v(t+t_n) - v_*(t)}_Y ,
\end{equation*}
then by replacing $t$ by $t-t_n$, we have 
\begin{equation}
\label{eq5}
\forall t\in\R,\quad\norm{u(t) - u_*(t-t_n)}_X \leq \frac{\eps}{2} + M \norm{v(t) - v_*(t-t_n)}_Y .
\end{equation}
From \eqref{eq59} and \eqref{eq5}, we deduce that $\ds\limsup_{n\to+\infty}\norm{u(t) - u_*(t-t_n)}_X\leq\frac{\eps}{2}$ for each $\eps>0$, then 
\begin{equation}
\label{eq65}
\forall t\in\R,\qquad\underset{k\rightarrow\infty}{\lim}u_*(t-t_{n})=u(t) .
\end{equation}
From \eqref{eq58} and \eqref{eq65}, we have $u\in AA(\R,X)$. 
\end{proof} 
%******************************************* 

%*******************************************
\vskip    2mm \noindent\textit{{\bf Proof of Theorem \ref{th11}.}}
For this, we prove that the hypotheses of Theorem \ref{th5} are equivalent to those of Theorem \ref{th11}. Suppose that {\bf (H1)} hold. We have to prove that the following assertions are equivalent:
\vskip 2 mm
{\bf i)} For all compact set $K\subset X$, $\tilde{f}_K\in S_{aa}^q(\R,C(K,Y))$ (resp. $\tilde{f}_K\in S_{ap}^q(\R,C(K,Y))$).
\vskip 2 mm
{\bf ii)} {\bf (H2)-(H3)} and $f(\cdot,x)\in S_{aa}^q(\R,Y)$ (resp. $f(\cdot,x)\in S_{ap}^q(\R,Y)$) for all $x\in X$.
\vskip 2 mm
{\bf i) $\Longrightarrow$ ii)} For each compact set $K\subset X$, $\tilde{f}_K(t) = f(t,\cdot) \in C(K,Y)$: $f(t,\cdot)$ is continuous on $K$, then $f(t,\cdot)$ is continuous on  $X$ as continuous function on each compact of $X$. So Hypothesis {\bf (H2)} holds.
Let $x\in X$. $f(\cdot,x)\in S_{aa}^q(\R,Y)$ (resp. $f(\cdot,x)\in S_{ap}^q(\R,Y)$) results of $\tilde{f}_K\in S_{aa}^q(\R,C(K,Y))$ (resp. $\tilde{f}_K\in S_{ap}^q(\R,C(K,Y))$) by setting $K=\{x\}$.
It remains to state that {\bf (H3)}  holds. 
From Proposition \ref{prop3}, 
$\tilde{f}_K$  is Stepanov tight, 
then there exist a compact  set $\mathcal{K}$ of $C(K,Y)$ and a Lebesgue
measurable set $N\subset\R$, such that $\ds\sup_{t\in\R}\Big(\rm{meas}\left((t,t+1)\cap N\right)\Big)<r$ and $\tilde{f}_K(t)\in \mathcal{K}$ for all $t\in\R\setminus N$.
Then by Ascoli Theorem, the family $\set{\tilde{f}_K(t)}{t\in\R\setminus N}$ is    equi-uniformly continuous, that is $\forall \eps>0$, $\exists \delta>0$, $\forall x_1$,  $x_2\in K$, $\forall t\in\R\setminus N$, $\norm{x_1-x_2}\leq \delta \implies \norm{f(t,x_1)-f(t,x_2)}\leq\eps$.
Therefore {\bf (H3)} holds.
\vskip2mm
{\bf ii) $\Longrightarrow$ i)} 
Let us denote by $\alpha^{K}_{\delta}$ defined by \eqref{eq57} in Remark \ref{rq2}  where $K\subset X$ is a compact set.
From Hypothesis {\bf (H1)} we deduce that
$\ds 0 \leq\alpha^K_{\delta}(t) \leq 2\left(a \sup_{x\in K}\norm{x}^{\frac{p}{q}} + b(t)\right)$ where $\ds a \sup_{x\in K}\norm{x}^{\frac{p}{q}} + b(\cdot) \in S^q_{aa}(\R)$. 
Then $\alpha^K_{\delta}\in BS^q(\R)$. 
From Proposition \ref{prop3}, the function $\ds a \sup_{x\in K}\norm{x}^{\frac{p}{q}} + b(\cdot)$ is Stepanov $p$-uniformly integrable, then $\alpha^K_{\delta}$ is also one.
From  the characterization of Hypothesis {\bf (H3)} given in i) in Remark \ref{rq2},
we deduce that
\begin{equation*}
\forall\eps>0, \qquad \lim_{\delta\to0}\sup_{t\in\R}\Big(
\rm{meas}\left( \set{s\in (t,t+1)}{\abs{\alpha^K_{\delta}(s)}\geq\eps}\right) \Big)=0 .
\end{equation*}
Then by using Proposition \ref{prop2}, we have
\begin{equation}
\label{eq64}
\lim_{\delta\to0}\norm{\alpha^K_{\delta}}_{S^q}=0 .
\end{equation}
\vskip2mm
For $x\in X$, let us denote  by $f_x$   the function $f(\cdot,x)$. Since $f_x\in L^q_{loc}(\R,Y)$,  the map
 \begin{equation*}
F:\R\times X \to  L^q(0,1;Y) \text{ defined by } F(t,x) = (f_x)^b(t) 
\end{equation*}
 is well-defined. 
 If $f_x\in S_{aa}^q(\R,Y)$  for all $x\in X$, the function
$\underline{F} : X\to AA(\R,L^q(0,1;Y))$  with $\underline{F}(x) = F(\cdot ,x) = (f_x)^b$
 is well-defined. 
 For all compact set $K\subset X$, for all $\delta>0$, for all $x_1$ and $x_2\in K$ such that $\norm{x_1-x_2}\leq\delta$, one has 
 $$\norm{\underline{F}(x_1) - \underline{F}(x_2)}=\sup_{t\in\R}\norm{F(t,x_1) - F(t,x_2)}_{L^q} \leq \norm{\alpha^K_{\delta}}_{S^q} .$$
Then by using \eqref{eq64}, we deduce that  $\underline{F}$ is continuous each compact $K\subset X$, therefore 
$\underline{F}\in C(X,AA(\R,L^q(0,1;Y)))$.
 From Theorem \ref{th8}, we can assert that for every compact set $K\subset X$,
 the function
 $$\overline{F} : \R\to C(K,L^q(0,1;Y)) \text{ defined by }\overline{F}(t) = F(t ,\cdot)$$
satisfies 
\begin{equation}
\label{eq69}
\overline{F}\in AA(\R,C(K,L^q(0,1;Y)) \quad\text{ if }\quad \forall x\in X, \, f_x\in S_{aa}^q(\R,Y) .
\end{equation}
By using Theorem \ref{th9}, we state in the same way that
\begin{equation}
\label{eq70}
\overline{F}\in AP(\R,C(K,L^q(0,1;Y)) \quad\text{ if }\quad \forall x\in X, \, f_x\in S_{ap}^q(\R,Y) .
\end{equation}
\vskip 2 mm
Fix a compact set $K\subset X$.
By using Lemma  \ref{lem6}, we have $\tilde{f}_K\in L_{loc}^q(\R,C(K,Y))$.
Fix $\eps>0$. From \eqref{eq64}, one has 
\begin{equation}
\label{eq67}
\exists\delta_*>0 \text{ such that}\quad 
\norm{\alpha^K_{\delta_*}}_{S^q}\leq\frac{\eps}{4} .
\end{equation}
Since $K$ is a compact set, there exist $x_1,x_2,\cdots,x_N$ such that $K\subset \bigcup_{i=1}^{N}B(x_i,\delta_*)$.
Let  $x\in K$. There exists $j\in\{1,2,\cdots,N\}$ such that $\norm{x-x_j}\leq\delta_*$.
From the inequality
\[
\norm{f(t_1,x)-f(t_2,x)} \leq \norm{f(t_1,x)-f(t_1,x_j)} + \norm{f_{x_j}(t_1)-f_{x_j}(t_2)} + \norm{f(t_2,x_j)-f(t_2,x)} ,
\]
we deduce that
\begin{equation*}
\sup_{x\in K}\norm{f(t_1,x)-f(t_2,x)} \leq \alpha^K_{\delta_*}(t_1) + \alpha^K_{\delta_*}(t_2)
 + \sum_{i=1}^{N}\norm{f_{x_i}(t_1)-f_{x_i}(t_2)} .
\end{equation*}
Then 
\begin{eqnarray*}
\norm{(\tilde{f_K})^b(t_1)-(\tilde{f_K})^b(t_2)}_{L^q(0,1;C(K,Y))} \\
\leq\norm{(\alpha^K_{\delta_*})^b(t_1)}_{L^q} + \norm{(\alpha^K_{\delta_*})^b(t_2)}_{L^q}
 + \sum_{i=1}^{N}\norm{F(t_1,x_i)-F(t_2,x_i)}_{L^q} \\
\leq 2 \norm{\alpha^K_{\delta_*}}_{S^q} 
 +  N\sup_{x\in K}\norm{F(t_1,x)-F(t_2,x)}_{L^q} .
\end{eqnarray*}
By using \eqref{eq67} and $\ds\norm{\overline{F}(t_1)-\overline{F}(t_2)}_{C(K,L^q(0,1;Y))}=\sup_{x\in K}\norm{F(t_1,x)-F(t_2,x)}_{L^q}$, we have
\begin{equation*}
\norm{(\tilde{f}_K)^b(t_1)-(\tilde{f}_K)^b(t_2)}_{L^q(0,1;C(K,Y))} \leq \frac{\eps}{2}
 + N\norm{\overline{F}(t_1)-\overline{F}(t_2)}_{C(K,L^q(0,1;Y))} .
\end{equation*}
From  Lemma \ref{lem7} and \eqref{eq69}, we deduce that $(\tilde{f}_K)^b\in AA(\R,L^q(0,1;C(K,Y))$ if $f_x\in S_{aa}^q(\R,Y)$ for all $x\in X$.
Then $\tilde{f}_K\in S_{aa}^q(\R,C(K,Y))$. For similar reasons with \eqref{eq70} (instead of \eqref{eq69}) we have $\tilde{f}_K\in S_{ap}^q(\R,C(K,Y))$ if $f_x\in S_{ap}^q(\R,Y)$ for all $x\in X$.
%\end{proof}
\hfill$\Box$
%*******************************************

%%%%%%%%%%%%%%%%%%%%%%%%%%%
\section{Example}
\label{vii}
%%%%%%%%%%%%%%%%%%%%%%%%%%%
%Section 7

In this section, we explain why the two equivalent theorems \ref{th5} and \ref{th11} of the previous section provide an improvement and a generalization of the known results. For that we consider the following simple example: $X$ is a Banach space and $f:\R\times X\to X$ is the function defined by
$f(t,x)=\sin(a(t){\rm e}^{\norm{x}})x$ where $a\in S^p_{aa}(\R)$ (resp. $a\in S^p_{ap}(\R)$). 
We establish that Theorem \ref{th11} allows us to conclude,  then we show that the assumptions of the known results \cite{An-Pe, Be-Ch-Me-Ra-Sm, Di-Hu-Li-Xi, Di-Li-Xi, Lo-Di, NG-Di} are not all verified.
 \vskip 2 mm
First we show that the assumptions of Theorem \ref{th11} are  satisfied
for this example.
It is obvious that Hypothesis {\bf (H2)} holds and that it is the same for Hypothesis {\bf (H1)} with $p = q\geq 1$. 
We have also $f(\cdot,x)\in L^{p}_{loc}(\R,Y)$. Let $K\subset X$ be a compact set. Then from Lemma \ref{lem6}, we have $\tilde{f}_K\in L^{p}_{loc}(\R,C(K,Y))$.
From  the following inequality
$$\norm{f(t_1,x)-f(t_2,x)} \leq \norm{x}{\rm e}^{\norm{x}} \abs{a(t_1)-a(t_2)}$$
with $R=\sup_{x\in K}\norm{x}<+\infty$, we obtain
$$\sup_{x\in K}\norm{f(t_1,x) - f(t_2,x)}  \leq R{\rm e}^{R} \abs{a(t_1)-a(t_2)} .$$ 
Then we have 
$$\norm{(\tilde{f}_K)^b(t_1) - (\tilde{f}_K)^b(t_2)}_{L^p}   \leq R{\rm e}^{R} \norm{a^b(t_1)-a^b(t_2)}_{L^p}  .$$ 
By using Lemma \ref{lem7}, we deduce  that $\tilde{f}_K\in S_{aa}^p(\R,C(K,Y))$ (resp. $\tilde{f}_K\in S_{ap}^p(\R,C(K,Y))$) if $a\in S^p_{aa}(\R)$ (resp. $a\in S^p_{ap}(\R)$). 
Then all assumptions of Theorem \ref{th11} are fulfilled.
\vskip 2 mm
Secondly in the literature \cite{Be-Ch-Me-Ra-Sm, Di-Hu-Li-Xi, Di-Li-Xi, Lo-Di, NG-Di}, except Andres et al.   \cite{An-Pe}, the authors use the Lipschitzian condition \eqref{eq49}  to state that the operator of Nemytskii maps $S_{aa}^p(\R,X)$ into itself  
 or maps $S_{ap}^p(\R,X)$ into itself.
In our example the function $f$ does not satisfy \eqref{eq49} if $a$ is not the null function.
To see that, we choice  $t_0\in\R$ such that $a(t_0)\not=0$ and $x_0\in X$ such that 
$\norm{x_0}=1$ and we denote by $\ds \eps_k = \ln\left(1+\frac{1}{2k}\right)$, $\ds s_k = \ln\left(\frac{k\pi}{\abs{a(t_0)}}\right)$, $\ds x_k = s_kx_0$ and $\ds y_k = (s_k+\eps_k)x_0$ for $k\in\N\setminus\{0\}$. Then for $k\in\N$ enough large   such that $s_k>0$, we have
$$\frac{\norm{f(t_0,y_k)-f(t_0,x_k)}}{\norm{y_k-x_k}} = \frac{\norm{f(t_0,y_k)}}{\eps_k} = 1+\frac{s_k}{\eps_k} \to+\infty \quad (\text{ as } k \to+\infty) .$$
Then $f$ does not satisfy the Lipschitzian condition \eqref{eq49}.
\vskip 2 mm
Here we explain why results \cite[Lemma 3.2, Proposition 3.4]{An-Pe} of Andres et al.   are unusable  on our example. 
If $a(t)\not=0$, we cannot use \cite[Proposition 3.4]{An-Pe} which is described in Remark \ref{rq4}, since $\ds \sup_{x\in X}\norm{f(t,x)} = \sup_{x\in X}\abs{\sin(a(t){\rm e}^{\norm{x}})} \norm{x} = +\infty$.
By using Corollary \ref{cor12},  we can assert that the Nemytskii operator associated to the function 
$g:X\to \R$ defined by $g(x)={\rm e}^{\norm{x}}$ does not map $S_{aa}^p(\R,X)$ into $S_{aa}^p(\R)$   when the Banach space $X$ is separable. Then in the particular case where 
$a(t)=1$ for all $t\in\R$, 
we cannot use  \cite[Lemma 3.2]{An-Pe} which is described in Remark \ref{rq5} to conclude.
\vskip 2 mm
For theses reasons  Theorem \ref{th11}  provides an improvement and a generalization of these results. It is the same for the equivalent Theorem \ref{th5}.

%%%%%%%%%%%%%%%%%%%%%%%%%

\end{document}